\documentclass[11pt]{article}
\usepackage[a4paper, margin=30mm]{geometry}

\usepackage{tikz}
\usepackage{amsthm}
\usepackage{thmtools}
\usepackage{amsmath}
\usepackage{amsfonts}
\usepackage[backend=biber,style=numeric,maxbibnames=99]{biblatex}
\usepackage{url}
\usepackage{hyperref}
\usepackage[hyphenbreaks]{breakurl}
\usepackage{epsfig}
\usepackage{float}
\usepackage{stmaryrd}
\usepackage[dvipsnames,svgnames,x11names]{xcolor}
\usepackage{graphicx}
\graphicspath{ {./drawings/} }

\usepackage{subfiles}

\DeclareFieldFormat[online]{title}{\mkbibquote{#1}}

\newtheorem{theorem}{Theorem}[section]
\declaretheorem[sibling=theorem]{conjecture}
\newtheorem{observation}[theorem]{Observation}
\newtheorem{case}{Case}
\newtheorem{lemma}[theorem]{Lemma}
\theoremstyle{definition}
\newtheorem{definition}[theorem]{Definition}

\newcommand{\unlab}[1]{\left\llbracket #1\right\rrbracket}
\newcommand{\cl}[0]{\texttt{cl}}

\title{Balanced bipartite distance of $K_4$-free graphs}
\author{J\'ozsef Balogh\footnotemark[1]\thanks{Department of Mathematics, University of Illinois Urbana-Champaign, Urbana, IL, USA, and Extremal Combinatorics and Probability Group (ECOPRO), Institute for Basic Science (IBS-R029-C4), Daejeon, South Korea. Partially supported by NSF grants RTG DMS-1937241, FRG DMS-2152488, (UIUC Campus Research Board Award RB26026), Simons Collaboration grant [SFI-MPS-TSM-00013107, JB].}
\and Ignacy Buczek\thanks{Faculty of Mathematics and Computer Science, Jagiellonian University, {\L}ojasiewicza 6, 30-348 Krak\'{o}w, Poland. Research supported by the National Science Centre grant 2021/42/E/ST1/00193.} \and Andrzej Grzesik\footnotemark[2] \and Piotr Kuc\footnotemark[2]}
\date{\today}

\begin{document}

\maketitle
\begingroup
\renewcommand{\thefootnote}{\relax}
\footnotemark
\footnotetext{E-mails: {\tt  jobal@illinois.edu}, {\tt ignacy.buczek@doctoral.uj.edu.pl}, {\tt andrzej.grzesik@uj.edu.pl}, {\tt piotr.kuc@student.uj.edu.pl}.}
\endgroup

\tikzset{flag/.style={scale=0.30}}

\tikzset{default/.style={
    draw,
    minimum size=1.8mm,
    inner sep=0pt,
    font=\tiny,
  }
}

\tikzset{root/.style={
    rectangle,
  }
}

\tikzset{vertex/.style={
    circle,
    fill=black,
  }
}

\tikzset{red/.style={
    fill=BrickRed,
    text=white
  }
}

\tikzset{blue/.style={
    fill=PowderBlue,
    text=black
  }
}

\tikzset{white/.style={
    fill=white,
    text=black
  }
}

\tikzset{black/.style={
    fill=black,
    text=white
  }
}

\def\FlagInit{
  \pgfdeclarelayer{roots}
  \pgfdeclarelayer{vertices}
  \pgfdeclarelayer{edges}
  \pgfsetlayers{edges, vertices, roots}
}

\def\VertexCoordinates#1#2{
  \pgfmathsetmacro{\N}{#1 - 1}
  \foreach \i in {0, ..., \N} {
    \pgfmathsetmacro{\angle}{360/#1 * \i + #2}%
    \pgfmathsetmacro{\x}{cos(\angle)}%
    \pgfmathsetmacro{\y}{sin(\angle)}%
    \coordinate (x\i) at (\x,\y);%
  }
}

\def\RootVertex#1#2#3{
  \begin{pgfonlayer}{roots}
    \node[default, root, #3] at (x#1) {\the\numexpr#2+1\relax};
  \end{pgfonlayer}
}

\def\DrawVertex#1#2{
  \begin{pgfonlayer}{vertices}
    \node[default, vertex, #2] at (x#1) {};
  \end{pgfonlayer}
}

\def\RootVertices#1#2{
  \pgfmathsetmacro{\N}{#1 - 1}
  \foreach \i in {0, ..., \N} {
    \RootVertex{\i}{\i}{#2}
  }
}

\def\Connect#1#2#3{
  \begin{pgfonlayer}{edges}
    \draw[#3] (x#1) -- (x#2);
  \end{pgfonlayer}
}

\def\Cycle#1{
  \pgfmathsetmacro{\N}{#1 - 1}
  \foreach \i in {0, ..., \N} {
    \pgfmathtruncatemacro{\nextvertex}{mod(\i + 1, #1)}
    \Connect{\i}{\nextvertex}{black}
  }
}

\newcommand{\vc}[1]{\ensuremath{\vcenter{\hbox{#1}}}}

\newcommand{\RootedVertex}[1]{
  \vc{
    \begin{tikzpicture}[flag]
      \FlagInit
      \VertexCoordinates{1}{0}
      \RootVertex{0}{0}{#1}
    \end{tikzpicture}
  }
}

\newcommand{\VertexM}[2]{
  \vc{
    \begin{tikzpicture}[flag]
      \FlagInit
      \VertexCoordinates{2}{240}
      \RootVertex{0}{0}{#1}
      \DrawVertex{1}{#2}
      \Connect{0}{1}{black}
    \end{tikzpicture}
  }
}

\newcommand{\VertexN}[2]{
  \vc{
    \begin{tikzpicture}[flag]
      \FlagInit
      \VertexCoordinates{2}{240}
      \RootVertex{0}{0}{#1}
      \DrawVertex{1}{#2}
    \end{tikzpicture}
  }
}

\newcommand{\VertexMM}[3]{
  \vc{
    \begin{tikzpicture}[flag]
      \FlagInit
      \VertexCoordinates{3}{270}
      \RootVertex{0}{0}{#1}
      \DrawVertex{1}{#2}
      \DrawVertex{2}{#3}
      \Connect{0}{1}{black}
      \Connect{0}{2}{black}
      \Connect{1}{2}{black}
    \end{tikzpicture}
  }
}

\newcommand{\VertexNM}[3]{
  \vc{
    \begin{tikzpicture}[flag]
      \FlagInit
      \VertexCoordinates{3}{270}
      \RootVertex{0}{0}{#1}
      \DrawVertex{1}{#2}
      \DrawVertex{2}{#3}
      \Connect{0}{1}{black}
      \Connect{1}{2}{black}
    \end{tikzpicture}
  }
}

\newcommand{\VertexNN}[3]{
  \vc{
    \begin{tikzpicture}[flag]
      \FlagInit
      \VertexCoordinates{3}{270}
      \RootVertex{0}{0}{#1}
      \DrawVertex{1}{#2}
      \DrawVertex{2}{#3}
      \Connect{1}{2}{black}
    \end{tikzpicture}
  }
}

\newcommand{\Edge}[2]{
  \vc{
    \begin{tikzpicture}[flag]
      \FlagInit
      \VertexCoordinates{2}{60}
      \DrawVertex{0}{#1}
      \DrawVertex{1}{#2}
      \Connect{0}{1}{black}
    \end{tikzpicture}
  }
}

\newcommand{\RootedEdge}[2]{
  \vc{
    \begin{tikzpicture}[flag]
      \FlagInit
      \VertexCoordinates{2}{180}
      \RootVertex{0}{0}{#1}
      \RootVertex{1}{1}{#2}
      \Connect{0}{1}{black}
    \end{tikzpicture}
  }
}

\newcommand{\Triangle}[3]{
  \vc{
    \begin{tikzpicture}[flag]
      \FlagInit
      \VertexCoordinates{3}{210}
      \DrawVertex{0}{#1}
      \DrawVertex{1}{#2}
      \DrawVertex{2}{#3}
      \Connect{0}{1}{black}
      \Connect{1}{2}{black}
      \Connect{0}{2}{black}
    \end{tikzpicture}
  }
}

\newcommand{\TriangleLM}[2]{
  \vc{
    \begin{tikzpicture}[flag]
      \FlagInit
      \VertexCoordinates{6}{210}
      \RootVertex{0}{0}{#1}
      \RootVertex{2}{1}{#1}
      \RootVertex{4}{2}{#1}
      \DrawVertex{5}{#2}
      \Connect{0}{2}{black}
      \Connect{0}{4}{black}
      \Connect{2}{4}{black}
      \Connect{0}{5}{black}
      \Connect{4}{5}{black}
    \end{tikzpicture}
  }
}

\newcommand{\TriangleMR}[2]{
  \vc{
    \begin{tikzpicture}[flag]
      \FlagInit
      \VertexCoordinates{6}{210}
      \RootVertex{0}{0}{#1}
      \RootVertex{2}{1}{#1}
      \RootVertex{4}{2}{#1}
      \DrawVertex{3}{#2}
      \Connect{0}{2}{black}
      \Connect{0}{4}{black}
      \Connect{2}{4}{black}
      \Connect{2}{3}{black}
      \Connect{3}{4}{black}
    \end{tikzpicture}
  }
}

\newcommand{\TriangleLR}[2]{
  \vc{
    \begin{tikzpicture}[flag]
      \FlagInit
      \VertexCoordinates{6}{210}
      \RootVertex{0}{0}{#1}
      \RootVertex{2}{1}{#1}
      \RootVertex{4}{2}{#1}
      \DrawVertex{1}{#2}
      \Connect{0}{2}{black}
      \Connect{0}{4}{black}
      \Connect{2}{4}{black}
      \Connect{0}{1}{black}
      \Connect{1}{2}{black}
    \end{tikzpicture}
  }
}

\newcommand{\EdgeL}[3]{
  \vc{
    \begin{tikzpicture}[flag]
      \FlagInit
      \VertexCoordinates{3}{210}
      \RootVertex{0}{0}{#1}
      \RootVertex{1}{1}{#2}
      \DrawVertex{2}{#3}
      \Connect{0}{1}{black}
      \Connect{0}{2}{black}
    \end{tikzpicture}
  }
}

\newcommand{\EdgeR}[3]{
  \vc{
    \begin{tikzpicture}[flag]
      \FlagInit
      \VertexCoordinates{3}{210}
      \RootVertex{0}{0}{#1}
      \RootVertex{1}{1}{#2}
      \DrawVertex{2}{#3}
      \Connect{0}{1}{black}
      \Connect{1}{2}{black}
    \end{tikzpicture}
  }
}

\newcommand{\EdgeN}[3]{
  \vc{
    \begin{tikzpicture}[flag]
      \FlagInit
      \VertexCoordinates{3}{210}
      \RootVertex{0}{0}{#1}
      \RootVertex{1}{1}{#2}
      \DrawVertex{2}{#3}
      \Connect{0}{1}{black}
    \end{tikzpicture}
  }
}

\newcommand{\EdgeB}[3]{
  \vc{
    \begin{tikzpicture}[flag]
      \FlagInit
      \VertexCoordinates{3}{210}
      \RootVertex{0}{0}{#1}
      \RootVertex{1}{1}{#2}
      \DrawVertex{2}{#3}
      \Connect{0}{1}{black}
      \Connect{0}{2}{black}
      \Connect{1}{2}{black}
    \end{tikzpicture}
  }
}

\newcommand{\EdgeNaN}[4]{
  \vc{
    \begin{tikzpicture}[flag]
      \FlagInit
      \VertexCoordinates{4}{225}
      \RootVertex{0}{0}{#1}
      \RootVertex{1}{1}{#2}
      \DrawVertex{2}{#3}
      \DrawVertex{3}{#4}
      \Connect{0}{1}{black}
      \Connect{2}{3}{black}
    \end{tikzpicture}
  }
}

\newcommand{\EdgeLN}[4]{
  \vc{
    \begin{tikzpicture}[flag]
      \FlagInit
      \VertexCoordinates{4}{225}
      \RootVertex{0}{0}{#1}
      \RootVertex{1}{1}{#2}
      \DrawVertex{2}{#3}
      \DrawVertex{3}{#4}
      \Connect{0}{1}{black}
      \Connect{0}{3}{black}
      \Connect{2}{3}{black}
    \end{tikzpicture}
  }
}

\newcommand{\EdgeRN}[4]{
  \vc{
    \begin{tikzpicture}[flag]
      \FlagInit
      \VertexCoordinates{4}{225}
      \RootVertex{0}{0}{#1}
      \RootVertex{1}{1}{#2}
      \DrawVertex{2}{#3}
      \DrawVertex{3}{#4}
      \Connect{0}{1}{black}
      \Connect{1}{2}{black}
      \Connect{2}{3}{black}
    \end{tikzpicture}
  }
}

\newcommand{\EdgeBN}[4]{
  \vc{
    \begin{tikzpicture}[flag]
      \FlagInit
      \VertexCoordinates{4}{225}
      \RootVertex{0}{0}{#1}
      \RootVertex{1}{1}{#2}
      \DrawVertex{2}{#3}
      \DrawVertex{3}{#4}
      \Connect{0}{1}{black}
      \Connect{0}{3}{black}
      \Connect{1}{3}{black}
      \Connect{2}{3}{black}
    \end{tikzpicture}
  }
}

\newcommand{\EdgeLaR}[4]{
  \vc{
    \begin{tikzpicture}[flag]
      \FlagInit
      \VertexCoordinates{4}{225}
      \RootVertex{0}{0}{#1}
      \RootVertex{1}{1}{#2}
      \DrawVertex{2}{#3}
      \DrawVertex{3}{#4}
      \Connect{0}{1}{black}
      \Connect{0}{3}{black}
      \Connect{1}{2}{black}
      \Connect{2}{3}{black}
    \end{tikzpicture}
  }
}

\newcommand{\EdgeBL}[4]{
  \vc{
    \begin{tikzpicture}[flag]
      \FlagInit
      \VertexCoordinates{4}{225}
      \RootVertex{0}{0}{#1}
      \RootVertex{1}{1}{#2}
      \DrawVertex{2}{#3}
      \DrawVertex{3}{#4}
      \Connect{0}{1}{black}
      \Connect{0}{2}{black}
      \Connect{1}{2}{black}
      \Connect{0}{3}{black}
      \Connect{2}{3}{black}
    \end{tikzpicture}
  }
}

\newcommand{\EdgeBR}[4]{
  \vc{
    \begin{tikzpicture}[flag]
      \FlagInit
      \VertexCoordinates{4}{225}
      \RootVertex{0}{0}{#1}
      \RootVertex{1}{1}{#2}
      \DrawVertex{2}{#3}
      \DrawVertex{3}{#4}
      \Connect{0}{1}{black}
      \Connect{1}{2}{black}
      \Connect{0}{3}{black}
      \Connect{1}{3}{black}
      \Connect{2}{3}{black}
    \end{tikzpicture}
  }
}

\newcommand{\EdgeLaL}[4]{
  \vc{
    \begin{tikzpicture}[flag]
      \FlagInit
      \VertexCoordinates{4}{225}
      \RootVertex{0}{0}{#1}
      \RootVertex{1}{1}{#2}
      \DrawVertex{2}{#3}
      \DrawVertex{3}{#4}
      \Connect{0}{1}{black}
      \Connect{0}{2}{black}
      \Connect{0}{3}{black}
      \Connect{2}{3}{black}
    \end{tikzpicture}
  }
}

\newcommand{\EdgeRaR}[4]{
  \vc{
    \begin{tikzpicture}[flag]
      \FlagInit
      \VertexCoordinates{4}{225}
      \RootVertex{0}{0}{#1}
      \RootVertex{1}{1}{#2}
      \DrawVertex{2}{#3}
      \DrawVertex{3}{#4}
      \Connect{0}{1}{black}
      \Connect{1}{2}{black}
      \Connect{1}{3}{black}
      \Connect{2}{3}{black}
    \end{tikzpicture}
  }
}

\newcommand{\RootCherry}[1]{
  \RootVertex{0}{0}{#1}
  \RootVertex{1}{1}{#1}
  \RootVertex{2}{2}{#1}
}

\newcommand{\CherryN}[2]{
  \vc{
    \begin{tikzpicture}[flag]
      \FlagInit
      \VertexCoordinates{4}{180}
      \RootCherry{#1}
      \DrawVertex{3}{#2}
      \Connect{0}{1}{black}
      \Connect{1}{2}{black}
    \end{tikzpicture}
  }
}

\newcommand{\CherryM}[2]{
  \vc{
    \begin{tikzpicture}[flag]
      \FlagInit
      \VertexCoordinates{4}{180}
      \RootCherry{#1}
      \DrawVertex{3}{#2}
      \Connect{0}{1}{black}
      \Connect{1}{2}{black}
      \Connect{1}{3}{black}
    \end{tikzpicture}
  }
}

\newcommand{\CherryR}[2]{
  \vc{
    \begin{tikzpicture}[flag]
      \FlagInit
      \VertexCoordinates{4}{180}
      \RootCherry{#1}
      \DrawVertex{3}{#2}
      \Connect{0}{1}{black}
      \Connect{1}{2}{black}
      \Connect{2}{3}{black}
    \end{tikzpicture}
  }
}

\newcommand{\CherryL}[2]{
  \vc{
    \begin{tikzpicture}[flag]
      \FlagInit
      \VertexCoordinates{4}{180}
      \RootCherry{#1}
      \DrawVertex{3}{#2}
      \Connect{0}{1}{black}
      \Connect{1}{2}{black}
      \Connect{0}{3}{black}
    \end{tikzpicture}
  }
}

\newcommand{\CherryLR}[2]{
  \vc{
    \begin{tikzpicture}[flag]
      \FlagInit
      \VertexCoordinates{4}{180}
      \RootCherry{#1}
      \DrawVertex{3}{#2}
      \Connect{0}{1}{black}
      \Connect{1}{2}{black}
      \Connect{0}{3}{black}
      \Connect{2}{3}{black}
    \end{tikzpicture}
  }
}

\newcommand{\CherryMR}[2]{
  \vc{
    \begin{tikzpicture}[flag]
      \FlagInit
      \VertexCoordinates{4}{180}
      \RootCherry{#1}
      \DrawVertex{3}{#2}
      \Connect{0}{1}{black}
      \Connect{1}{2}{black}
      \Connect{1}{3}{black}
      \Connect{2}{3}{black}
    \end{tikzpicture}
  }
}

\newcommand{\CherryLM}[2]{
  \vc{
    \begin{tikzpicture}[flag]
      \FlagInit
      \VertexCoordinates{4}{180}
      \RootCherry{#1}
      \DrawVertex{3}{#2}
      \Connect{0}{1}{black}
      \Connect{1}{2}{black}
      \Connect{0}{3}{black}
      \Connect{1}{3}{black}
    \end{tikzpicture}
  }
}

\newcommand{\CherryA}[2]{
  \vc{
    \begin{tikzpicture}[flag]
      \FlagInit
      \VertexCoordinates{4}{180}
      \RootCherry{#1}
      \DrawVertex{3}{#2}
      \Connect{0}{1}{black}
      \Connect{1}{2}{black}
      \Connect{0}{3}{black}
      \Connect{1}{3}{black}
      \Connect{2}{3}{black}
    \end{tikzpicture}
  }
}

\newcommand{\CherryLaL}[2]{
  \vc{
    \begin{tikzpicture}[flag]
      \FlagInit
      \VertexCoordinates{5}{198}
      \RootCherry{#1}
      \DrawVertex{3}{#2}
      \DrawVertex{4}{#2}
      \Connect{0}{1}{black}
      \Connect{1}{2}{black}
      \Connect{0}{3}{black}
      \Connect{0}{4}{black}
      \Connect{3}{4}{black}
    \end{tikzpicture}
  }
}

\newcommand{\CherryRaR}[2]{
  \vc{
    \begin{tikzpicture}[flag]
      \FlagInit
      \VertexCoordinates{5}{198}
      \RootCherry{#1}
      \DrawVertex{3}{#2}
      \DrawVertex{4}{#2}
      \Connect{0}{1}{black}
      \Connect{1}{2}{black}
      \Connect{2}{3}{black}
      \Connect{2}{4}{black}
      \Connect{3}{4}{black}
    \end{tikzpicture}
  }
}

\newcommand{\CherryLaR}[2]{
  \vc{
    \begin{tikzpicture}[flag]
      \FlagInit
      \VertexCoordinates{5}{198}
      \RootCherry{#1}
      \DrawVertex{3}{#2}
      \DrawVertex{4}{#2}
      \Connect{0}{1}{black}
      \Connect{1}{2}{black}
      \Connect{2}{3}{black}
      \Connect{0}{4}{black}
      \Connect{3}{4}{black}
    \end{tikzpicture}
  }
}

\newcommand{\CherryLRaR}[2]{
  \vc{
    \begin{tikzpicture}[flag]
      \FlagInit
      \VertexCoordinates{5}{198}
      \RootCherry{#1}
      \DrawVertex{3}{#2}
      \DrawVertex{4}{#2}
      \Connect{0}{1}{black}
      \Connect{1}{2}{black}
      \Connect{2}{3}{black}
      \Connect{0}{4}{black}
      \Connect{2}{4}{black}
      \Connect{3}{4}{black}
    \end{tikzpicture}
  }
}

\newcommand{\CherryLaLR}[2]{
  \vc{
    \begin{tikzpicture}[flag]
      \FlagInit
      \VertexCoordinates{5}{198}
      \RootCherry{#1}
      \DrawVertex{3}{#2}
      \DrawVertex{4}{#2}
      \Connect{0}{1}{black}
      \Connect{1}{2}{black}
      \Connect{0}{3}{black}
      \Connect{2}{3}{black}
      \Connect{0}{4}{black}
      \Connect{3}{4}{black}
    \end{tikzpicture}
  }
}

\newcommand{\CherryLRaLR}[2]{
  \vc{
    \begin{tikzpicture}[flag]
      \FlagInit
      \VertexCoordinates{5}{198}
      \RootCherry{#1}
      \DrawVertex{3}{#2}
      \DrawVertex{4}{#2}
      \Connect{0}{1}{black}
      \Connect{1}{2}{black}
      \Connect{0}{3}{black}
      \Connect{2}{3}{black}
      \Connect{0}{4}{black}
      \Connect{2}{4}{black}
      \Connect{3}{4}{black}
    \end{tikzpicture}
  }
}

\newcommand{\CherryNN}[2]{
  \vc{
    \begin{tikzpicture}[flag]
      \FlagInit
      \VertexCoordinates{5}{198}
      \RootCherry{#1}
      \DrawVertex{3}{#2}
      \DrawVertex{4}{#2}
      \Connect{0}{1}{black}
      \Connect{1}{2}{black}
      \Connect{3}{4}{black}
    \end{tikzpicture}
  }
}

\newcommand{\CherryNA}[2]{
  \vc{
    \begin{tikzpicture}[flag]
      \FlagInit
      \VertexCoordinates{5}{198}
      \RootCherry{#1}
      \DrawVertex{3}{#2}
      \DrawVertex{4}{#2}
      \Connect{0}{1}{black}
      \Connect{1}{2}{black}
      \Connect{0}{3}{black}
      \Connect{1}{3}{black}
      \Connect{2}{3}{black}
      \Connect{3}{4}{black}
    \end{tikzpicture}
  }
}

\newcommand{\CherryNM}[2]{
  \vc{
    \begin{tikzpicture}[flag]
      \FlagInit
      \VertexCoordinates{5}{198}
      \RootCherry{#1}
      \DrawVertex{3}{#2}
      \DrawVertex{4}{#2}
      \Connect{0}{1}{black}
      \Connect{1}{2}{black}
      \Connect{1}{3}{black}
      \Connect{3}{4}{black}
    \end{tikzpicture}
  }
}

\newcommand{\CherryMM}[2]{
  \vc{
    \begin{tikzpicture}[flag]
      \FlagInit
      \VertexCoordinates{5}{198}
      \RootCherry{#1}
      \DrawVertex{3}{#2}
      \DrawVertex{4}{#2}
      \Connect{0}{1}{black}
      \Connect{1}{2}{black}
      \Connect{1}{3}{black}
      \Connect{1}{4}{black}
      \Connect{3}{4}{black}
    \end{tikzpicture}
  }
}

\newcommand{\CherryMRaR}[2]{
  \vc{
    \begin{tikzpicture}[flag]
      \FlagInit
      \VertexCoordinates{5}{198}
      \RootCherry{#1}
      \DrawVertex{3}{#2}
      \DrawVertex{4}{#2}
      \Connect{0}{1}{black}
      \Connect{1}{2}{black}
      \Connect{2}{3}{black}
      \Connect{1}{4}{black}
      \Connect{2}{4}{black}
      \Connect{3}{4}{black}
    \end{tikzpicture}
  }
}

\newcommand{\CherryLMaR}[2]{
  \vc{
    \begin{tikzpicture}[flag]
      \FlagInit
      \VertexCoordinates{5}{198}
      \RootCherry{#1}
      \DrawVertex{3}{#2}
      \DrawVertex{4}{#2}
      \Connect{0}{1}{black}
      \Connect{1}{2}{black}
      \Connect{2}{3}{black}
      \Connect{0}{4}{black}
      \Connect{1}{4}{black}
      \Connect{3}{4}{black}
    \end{tikzpicture}
  }
}

\newcommand{\CherryAaR}[2]{
  \vc{
    \begin{tikzpicture}[flag]
      \FlagInit
      \VertexCoordinates{5}{198}
      \RootCherry{#1}
      \DrawVertex{3}{#2}
      \DrawVertex{4}{#2}
      \Connect{0}{1}{black}
      \Connect{1}{2}{black}
      \Connect{2}{3}{black}
      \Connect{0}{4}{black}
      \Connect{1}{4}{black}
      \Connect{2}{4}{black}
      \Connect{3}{4}{black}
    \end{tikzpicture}
  }
}

\newcommand{\CherryLaMR}[2]{
  \vc{
    \begin{tikzpicture}[flag]
      \FlagInit
      \VertexCoordinates{5}{198}
      \RootVertex{0}{1}{#1}
      \RootVertex{1}{0}{#1}
      \RootVertex{2}{2}{#1}
      \DrawVertex{3}{#2}
      \DrawVertex{4}{#2}
      \Connect{0}{1}{black}
      \Connect{1}{2}{black}
      \Connect{1}{3}{black}
      \Connect{2}{3}{black}
      \Connect{0}{4}{black}
      \Connect{3}{4}{black}
    \end{tikzpicture}
  }
}

\newcommand{\CherryLRaMR}[2]{
  \vc{
    \begin{tikzpicture}[flag]
      \FlagInit
      \VertexCoordinates{5}{198}
      \RootCherry{#1}
      \DrawVertex{3}{#2}
      \DrawVertex{4}{#2}
      \Connect{0}{1}{black}
      \Connect{1}{2}{black}
      \Connect{1}{3}{black}
      \Connect{2}{3}{black}
      \Connect{0}{4}{black}
      \Connect{2}{4}{black}
      \Connect{3}{4}{black}
    \end{tikzpicture}
  }
}

\newcommand{\CherryLaLM}[2]{
  \vc{
    \begin{tikzpicture}[flag]
      \FlagInit
      \VertexCoordinates{5}{198}
      \RootCherry{#1}
      \DrawVertex{3}{#2}
      \DrawVertex{4}{#2}
      \Connect{0}{1}{black}
      \Connect{1}{2}{black}
      \Connect{0}{3}{black}
      \Connect{1}{3}{black}
      \Connect{0}{4}{black}
      \Connect{3}{4}{black}
    \end{tikzpicture}
  }
}

\newcommand{\CherryLaA}[2]{
  \vc{
    \begin{tikzpicture}[flag]
      \FlagInit
      \VertexCoordinates{5}{198}
      \RootCherry{#1}
      \DrawVertex{3}{#2}
      \DrawVertex{4}{#2}
      \Connect{0}{1}{black}
      \Connect{1}{2}{black}
      \Connect{0}{3}{black}
      \Connect{1}{3}{black}
      \Connect{2}{3}{black}
      \Connect{0}{4}{black}
      \Connect{3}{4}{black}
    \end{tikzpicture}
  }
}

\newcommand{\CherryLMaLR}[2]{
  \vc{
    \begin{tikzpicture}[flag]
      \FlagInit
      \VertexCoordinates{5}{198}
      \RootCherry{#1}
      \DrawVertex{3}{#2}
      \DrawVertex{4}{#2}
      \Connect{0}{1}{black}
      \Connect{1}{2}{black}
      \Connect{0}{3}{black}
      \Connect{2}{3}{black}
      \Connect{0}{4}{black}
      \Connect{1}{4}{black}
      \Connect{3}{4}{black}
    \end{tikzpicture}
  }
}

\newcommand{\CherryLRaA}[2]{
  \vc{
    \begin{tikzpicture}[flag]
      \FlagInit
      \VertexCoordinates{5}{198}
      \RootCherry{#1}
      \DrawVertex{3}{#2}
      \DrawVertex{4}{#2}
      \Connect{0}{1}{black}
      \Connect{1}{2}{black}
      \Connect{0}{3}{black}
      \Connect{1}{3}{black}
      \Connect{2}{3}{black}
      \Connect{0}{4}{black}
      \Connect{2}{4}{black}
      \Connect{3}{4}{black}
    \end{tikzpicture}
  }
}

\newcommand{\CherryNaMR}[2]{
  \vc{
    \begin{tikzpicture}[flag]
      \FlagInit
      \VertexCoordinates{5}{198}
      \RootCherry{#1}
      \DrawVertex{3}{#2}
      \DrawVertex{4}{#2}
      \Connect{0}{1}{black}
      \Connect{1}{2}{black}
      \Connect{1}{3}{black}
      \Connect{2}{3}{black}
      \Connect{3}{4}{black}
    \end{tikzpicture}
  }
}

\newcommand{\CherryLMaN}[2]{
  \vc{
    \begin{tikzpicture}[flag]
      \FlagInit
      \VertexCoordinates{5}{198}
      \RootCherry{#1}
      \DrawVertex{3}{#2}
      \DrawVertex{4}{#2}
      \Connect{0}{1}{black}
      \Connect{1}{2}{black}
      \Connect{0}{4}{black}
      \Connect{1}{4}{black}
      \Connect{3}{4}{black}
    \end{tikzpicture}
  }
}

\newcommand{\CherryNaA}[2]{
  \vc{
    \begin{tikzpicture}[flag]
      \FlagInit
      \VertexCoordinates{5}{198}
      \RootCherry{#1}
      \DrawVertex{3}{#2}
      \DrawVertex{4}{#2}
      \Connect{0}{1}{black}
      \Connect{1}{2}{black}
      \Connect{0}{3}{black}
      \Connect{1}{3}{black}
      \Connect{2}{3}{black}
      \Connect{3}{4}{black}
    \end{tikzpicture}
  }
}

\newcommand{\CherryMaMR}[2]{
  \vc{
    \begin{tikzpicture}[flag]
      \FlagInit
      \VertexCoordinates{5}{198}
      \RootCherry{#1}
      \DrawVertex{3}{#2}
      \DrawVertex{4}{#2}
      \Connect{0}{1}{black}
      \Connect{1}{2}{black}
      \Connect{1}{3}{black}
      \Connect{2}{3}{black}
      \Connect{1}{4}{black}
      \Connect{3}{4}{black}
    \end{tikzpicture}
  }
}

\newcommand{\CherryLMaM}[2]{
  \vc{
    \begin{tikzpicture}[flag]
      \FlagInit
      \VertexCoordinates{5}{198}
      \RootCherry{#1}
      \DrawVertex{3}{#2}
      \DrawVertex{4}{#2}
      \Connect{0}{1}{black}
      \Connect{1}{2}{black}
      \Connect{1}{3}{black}
      \Connect{0}{4}{black}
      \Connect{1}{4}{black}
      \Connect{3}{4}{black}
    \end{tikzpicture}
  }
}

\newcommand{\CherryMaA}[2]{
  \vc{
    \begin{tikzpicture}[flag]
      \FlagInit
      \VertexCoordinates{5}{198}
      \RootCherry{#1}
      \DrawVertex{3}{#2}
      \DrawVertex{4}{#2}
      \Connect{0}{1}{black}
      \Connect{1}{2}{black}
      \Connect{0}{3}{black}
      \Connect{1}{3}{black}
      \Connect{2}{3}{black}
      \Connect{1}{4}{black}
      \Connect{3}{4}{black}
    \end{tikzpicture}
  }
}

\newcommand{\CherryLMaMR}[2]{
  \vc{
    \begin{tikzpicture}[flag]
      \FlagInit
      \VertexCoordinates{5}{198}
      \RootCherry{#1}
      \DrawVertex{3}{#2}
      \DrawVertex{4}{#2}
      \Connect{0}{1}{black}
      \Connect{1}{2}{black}
      \Connect{1}{3}{black}
      \Connect{2}{3}{black}
      \Connect{0}{4}{black}
      \Connect{1}{4}{black}
      \Connect{3}{4}{black}
    \end{tikzpicture}
  }
}

\begin{abstract}
  We show that every $K_4$-free graph on $n$ vertices can be made balanced bipartite by removing at most $\frac{n^2}{9}$ edges.
  This proves a conjecture of Balogh, Clemen, and Lidick\'y, and generalizes both Sudakov's result on the bipartite distance of $K_4$-free graphs and Reiher's result on the sparse half of $K_4$-free graphs.
\end{abstract}

\section{Introduction}
The famous \emph{Sparse half conjecture} of Erd\H{o}s~\cite{Original}, open for over 50 years, states that every $n$-vertex triangle-free graph has a subset of  vertices of size   $\frac{n}{2}$ that spans at most $\frac{n^2}{50}$ edges. 
This conjecture received a lot of attention, see  \cite{erdosK4half, tri-sparse-first, tri-dense, tri-sparse}, with Razborov achieving the strongest known general bound of $\frac{27n^2}{1024}$ in \cite{tri-sparse}.

A similar problem, the \emph{Sparse halves conjecture}, which  asks for the minimum number of edges that must be removed from a triangle-free graph to make it bipartite, was posed by Erd\H{o}s~\cite{erdosHalves} even earlier.
This question is closely related to the \emph{Sparse half conjecture}, as it is expected that the tight lower bounds for both problems are attained by the same extremal graph (a balanced blowup of a pentagon).
Because of this, it is believed that to make a triangle-free graph bipartite, it is always sufficient to remove $\frac{n^2}{50} \cdot 2 = \frac{n^2}{25}$ edges.
This problem is also difficult and has attracted considerable attention \cite{sparse-halves-best, erdosK4halves, make-bipartite-2}.
The best known general bound is $\frac{2n^2}{47}$, which   was recently proved in \cite{sparse-halves-best}.

It is important to note that there is no implication relationship between the \emph{Sparse half conjecture} and the \emph{Sparse halves conjecture}, which is why they gave rise to two relatively independent research directions.
A pertinent question is that if these problems have an intuitive generalization, with a natural candidate being a hypothetical bound on the number of edges that have to be removed to make a triangle-free graph balanced bipartite.
If the tight bound was identical to the \emph{Sparse halves conjecture} and equaled $\frac{n^2}{25}$, it would imply both of the conjectures.
This question was initially asked and answered by Balogh, Clemen, and Lidick\'y in \cite{10problems}, with the answer being unfortunately negative.
It is easy to see that to make the complete bipartite graph with parts of sizes $\frac{3n}{4}$ and $\frac{n}{4}$ balanced bipartite, one always has to remove at least $\frac{n^2}{16}$ edges.
Balogh, Clemen, and Lidick\'y (\cite{10problems}) proved that this is the optimal bound in general.
This implies that, surprisingly, proving a tight bound for this problem is significantly easier than for the unbalanced one.

\begin{theorem}[Balogh, Clemen, and Lidick\'y \cite{10problems}]\label{triangle-free-impossible}
  Every triangle-free graph on $n$ vertices can be made balanced bipartite by removing at most $\frac{n^2}{16}$ edges.
\end{theorem}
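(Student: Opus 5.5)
The plan is to split according to the maximum degree $\Delta$ of the triangle-free graph $G$, and in each regime exhibit an explicit balanced partition $V=A\cup B$ with $|A|=|B|=\frac n2$ (ignoring rounding) whose number of internal edges is at most $\frac{n^2}{16}$. The one tool I will use repeatedly is that in a triangle-free graph every neighbourhood $N(v)$ is an independent set.

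\emph{Case $\Delta\ge \frac n2$.} Take $v$ of maximum degree and choose $A\subseteq N(v)$ with $|A|=\frac n2$. Then $A$ is independent. The only edges that must be removed lie inside $B=V\setminus A$, and $G[B]$ is triangle-free on $\frac n2$ vertices. By Mantel's theorem it has at most $\frac{(n/2)^2}{4}=\frac{n^2}{16}$ edges, which finishes this case. This case already shows where the extremal example $K_{3n/4,n/4}$ sits: there $N(v)$ is the large side.

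\emph{Case $\Delta\le \frac n4$.} Take a uniformly random balanced partition. Each edge is internal with probability $\frac{n/2-1}{n-1}<\frac12$, and $e(G)\le \frac{n\Delta}{2}\le \frac{n^2}{8}$. So the expected number of internal edges is below $\frac{n^2}{16}$, and some partition achieves it.

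\emph{Case $\frac n4<\Delta<\frac n2$.} This intermediate range is the main obstacle. Fix $v$ of degree $d=\Delta$ and set $R=V\setminus N(v)$, so $|R|=n-d$. Put $A=N(v)\cup S$, where $S$ is a uniformly random subset of $R$ of size $\frac n2-d$, and let $B=R\setminus S$. Let $p=\frac{n/2-d}{n-d}<\frac12$. Edges inside $N(v)$ never count. An edge between $N(v)$ and $R$ is internal with probability $p$. An edge inside $R$ is internal with probability about $p^2+(1-p)^2$. There are at most $m_1\le d\Delta=d^2$ edges of the first type, and $m_2\le e(G)-m_1$ edges inside $R$.

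I would bound $m_2$ in two ways: by Mantel on $G[R]$, giving $m_2\le\frac{(n-d)^2}{4}$, and by the degree bound $2m_2+m_1\le (n-d)\Delta$. Taking the better combination, I then need
\[
p\,m_1+\bigl(p^2+(1-p)^2\bigr)m_2\le \tfrac{n^2}{16}
\]
for all $d\in(\frac n4,\frac n2)$. If this one-parameter optimisation fails somewhere in the range, the fallback is to replace the single vertex $v$ by averaging the same construction over all vertices $v$. Then $\sum_{u\in N(v)}d(u)$ enters in place of $d^2$, and the identity $\sum_v\sum_{u\in N(v)}d(u)=\sum_u d(u)^2$ can be traded against $e(G)$ via Cauchy--Schwarz. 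I expect the main work of the proof to be checking that this intermediate range closes, including the boundary values $d=\frac n4$ and $d=\frac n2$.
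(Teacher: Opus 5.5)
The paper does not prove this statement; it only quotes it from Balogh, Clemen, and Lidick\'y, so your proposal has to stand on its own. Your cases $\Delta\ge\frac n2$ and $\Delta\le\frac n4$ are correct. Rounding can be removed by the $2$-blowup argument of Lemma~\ref{2-blowup}, which goes through verbatim for triangle-free graphs and the bound $\frac{n^2}{16}$.

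The gap is the intermediate case. It is not a matter of sharper estimates: the random completion of $N(v)$ for a maximum-degree vertex $v$ can genuinely cost more than $\frac{n^2}{16}$ in expectation. Fix $\alpha\in(\frac13,\frac12)$. Let $G$ be the disjoint union of a star with centre $v$ and $\alpha n-1$ leaves, together with the complete bipartite graph $K_{b,b}$, where $b=\frac{(1-\alpha)n}{2}$. Since $b<\alpha n-1$ for large $n$, the centre $v$ is the unique vertex of maximum degree, and $\frac n4<\Delta<\frac n2$. In your notation, $m_1=\alpha n-1$, $m_2=b^2$ (essentially the Mantel bound for $G[R]$), and $p=\frac{1/2-\alpha}{1-\alpha}+O(\frac1n)$. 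The expected number of class-edges is then
\[
\bigl(p^2+(1-p)^2\bigr)\frac{(1-\alpha)^2n^2}{4}+O(n)=\frac{n^2}{16}+\frac{(\frac12-\alpha)^2}{4}\,n^2+O(n)>\frac{n^2}{16},
\]
even though $G$ is only $O(n)$ edges away from being balanced bipartite. So no optimisation over $d,m_1,m_2$, and no refinement of the bounds for this choice of $v$, can close the case.

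Everything therefore rests on your fallback, which is only sketched. You need either a different choice of $v$ or a suitably weighted average over $v$, and then an actual proof that the resulting bound is at most $\frac{n^2}{16}$. In the example above any vertex of $K_{b,b}$ works; these are exactly the vertices maximising $\sum_{u\in N(v)}\deg(u)$.

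This step is the real content of the theorem in this range, not a routine check. For every $\frac n3$-regular triangle-free graph (for instance a balanced blowup of $C_6$) you have $m_1=\frac{n^2}{9}$, $m_2=\frac{n^2}{18}$ and $p=\frac14$. The completion of $N(v)$ then costs exactly $\frac14\cdot\frac{n^2}{9}+\frac58\cdot\frac{n^2}{18}=\frac{n^2}{16}$ for every $v$. Hence whatever averaged inequality you prove must be exactly tight on this whole family.

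For comparison, in the $K_4$-free setting the paper uses this same kind of neighbourhood completion only in averaged form over random roots (vertices, edges, cherries), and needs a flag-algebra computation to exploit it. As written, your proposal establishes the theorem only when $\Delta\ge\frac n2$ or $\Delta\le\frac n4$.
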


The motive of asking how far from being bipartite a graph can be is a recurring theme  in Erd\H{o}s' work.
Not long after asking this question for the class of triangle-free graphs, he~ \cite{erdosK4halves} formulated a similar problem in the $K_4$-free setting, where
he conjectured that the complete tripartite graph is the extremal example.
 This was proved by Sudakov~ \cite{bipartite}.

\begin{theorem}[Sudakov \cite{bipartite}]\label{sudakov}
  Every $K_4$-free graph on $n$ vertices can be made bipartite by removing at most $\frac{n^2}{9}$ edges.
\end{theorem}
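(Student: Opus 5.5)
We do not need a new argument here: this is Sudakov's theorem, and below I sketch how I would reprove it. The plan is to use the natural bipartitions given by neighbourhoods and average over the choice of vertex, with a case split on edge density. For a vertex $v$, consider the partition $(N(v),V\setminus N(v))$. Since $G$ is $K_4$-free, $G[N(v)]$ is triangle-free. Write $e=e(G)$, $t$ for the number of triangles, and $t(v)$ for the number of triangles containing $v$. The number of edges to delete for this partition is
\[
 f(v)=e(N(v))+e(V\setminus N(v)) = e-\sum_{u\in N(v)} d(u)+2t(v).
\]
It suffices to find one $v$ with $f(v)\le n^2/9$.

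First, the sparse case. If $e\le 2n^2/9$, a max cut (or a random bipartition) keeps at least $e/2$ edges. We therefore delete at most $e/2\le n^2/9$ edges and are done. From now on assume $e>2n^2/9$.

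In the dense case I would average $f(v)$ with weights proportional to $d(v)$, or possibly an affine combination $\alpha+\beta d(v)$, chosen to match the tripartite extremal example. There $f(v)=n^2/9$ for every $v$, so all inequalities used must be tight on $K_{n/3,n/3,n/3}$. The plain average is
\[
\frac1n\sum_v f(v)= e-\frac1n\sum_u d(u)^2+\frac{6t}{n}.
\]
The negative term is controlled by Cauchy--Schwarz, $\sum_u d(u)^2\ge 4e^2/n$. The triangle term is the delicate part, and I would bound it using $K_4$-freeness. For every triangle $xyz$, each vertex of $G$ is adjacent to at most two of $x,y,z$, so $d(x)+d(y)+d(z)\le 2n$. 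Summing over triangles gives
\[
\sum_v d(v)t(v)\le 2nt.
\]
This is exactly why weighting $v$ by $d(v)$ helps: the weighted triangle contribution $\sum_v d(v)t(v)$ is then directly bounded. The weighted negative term becomes $\sum_v d(v)\sum_{u\sim v}d(u)=\sum_{uv\in E}2d(u)d(v)$. This I would lower bound using $d(u)+d(v)\ge$ (codegree) $+\,\ldots$, together with the Moon--Moser/Fisher type inequality relating $t$ to $e$ in $K_4$-free graphs.

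The main obstacle is combining these into a single inequality $\min_v f(v)\le n^2/9$ valid for every $e\in(2n^2/9,\,n^2/3]$. Pure averaging over $v$ may lose too much when degrees are irregular. My first attempt would be to optimise the weight $\alpha+\beta d(v)$ and reduce to a polynomial inequality in $e/n^2$, $t/n^3$ and $\sum d^2/n^3$, then check it against the known $K_4$-free triangle bounds. If that fails near the extremal density, I would follow Sudakov and argue by stability. A graph close to the extremal value is close to tripartite (Erd\H{o}s--Simonovits), and for nearly tripartite graphs one merges two of the three parts and counts the edges directly.
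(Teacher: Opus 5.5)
Your formula for $f(v)$, the sparse case $e\le 2n^2/9$, and the bound $\sum_v d(v)t(v)\le 2nt$ are all correct. The proposal nevertheless has a genuine gap. The range $2n^2/9<e\le n^2/3$ is the whole content of the theorem, you leave it as ``the main obstacle'', and the estimates you list provably cannot close it.

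Since $t$ enters $f(v)$ with a plus sign, lower bounds on $t$ (Goodman/Moon--Moser supersaturation) point the wrong way; you need upper bounds. The natural one for $K_4$-free graphs is $t\le en/9$: the three common neighbourhoods of a triangle are disjoint, and then apply Cauchy--Schwarz. Combined with $\sum_u d(u)^2\ge 4e^2/n$, the plain average gives $\frac1n\sum_v f(v)\le \frac{5e}{3}-\frac{4e^2}{n^2}$. But $\frac{5x}{3}-4x^2\le\frac19$ holds only for $x\le\frac1{12}$ or $x\ge\frac13$.

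For the degree-weighted average, $\frac{1}{2e}\sum_v d(v)f(v)=e-\frac1e\sum_{uv\in E}d(u)d(v)+\frac1e\sum_v d(v)t(v)$. Use the best possible bound $\sum_{uv\in E}d(u)d(v)\ge 4e^3/n^2$ (Blakley--Roy for walks of length three) together with $\sum_v d(v)t(v)\le 2nt\le 2n^2e/9$. You still only get $e-\frac{4e^2}{n^2}+\frac{2n^2}{9}$. This exceeds $n^2/9$ for every $e<n^2/3$, because $4x^2-x-\frac19=(x-\frac13)(4x+\frac13)$.

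Other variants do not help:
\begin{itemize}
\item Non-negative affine weights $\alpha+\beta d(v)$ only give convex combinations of these two bounds, so they fail too.
\item The sharper Fisher--Ryan bound $t\le (e/3)^{3/2}$ still leaves about $0.13n^2$ at $e=0.3n^2$ for the plain average.
\end{itemize}
Being tight on $T_3(n)=K_{n/3,n/3,n/3}$ is not enough. These estimates are simultaneously tight only at Tur\'an density, and just below it they lose more than the available slack. A genuinely new ingredient is required.

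The stability fallback does not fill this hole. Erd\H{o}s--Simonovits needs $e\ge n^2/3-o(n^2)$, whereas a counterexample is only known to satisfy $e>2n^2/9$. That a graph with $\texttt{b}(G)$ close to $n^2/9$ must be close to $T_3(n)$ is exactly what has to be proved; you cannot quote it.

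The paper does not reprove this statement directly. It follows from \autoref{best-basic}, since $\texttt{b}(G)\le\texttt{bb}(G)$. There, the near-extremality step comes from a flag-algebra certificate. After the minimum-degree reduction of \autoref{min-degree-obs}, the cut inequalities \eqref{vertex-cut}--\eqref{cherry-cut} come from partitions built from the neighbourhoods of a vertex, an edge and a cherry in the degree-coloured limit. These inequalities force \eqref{must-be-nice}: edge density near $2/3$, triangle density near $2/9$, and triangles whose three common neighbourhoods almost cover $V(G)$. Only then is the nearly tripartite case settled directly by \autoref{nice}.

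Your closing ``merge two parts and count'' step corresponds to that last part. It would need the same charging of edges inside parts against missing cross edges as in \autoref{missing edges}, which is doable. But without a replacement for the step that forces near-extremality, your argument proves the statement only for $e\le 2n^2/9$.
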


The \emph{Sparse half conjecture} also has a natural variant for  $K_4$-free graphs, see \cite{erdosK4halves}, which was posed  by Chung and Graham~\cite{Chung} and independently by Erd\H os, Faudree, Rousseau and Schelp~\cite{erdosK4half}.
Both groups anticipated that the tight bound should be once again attained by the complete balanced tripartite graph.
Confirming this bound seemed more elusive than proving the tight bound on the bipartite distance, but building on the partial progress of Liu and Ma (\cite{liu-ma}), who settled the conjecture for regular graphs,  Reiher~\cite{sparse} recently managed to do so.

\begin{theorem}[Reiher \cite{sparse}]\label{reiher}
  Every $K_4$-free graph on $n$ vertices has a subset of  $\frac{n}{2}$ vertices which spans  at most $\frac{n^2}{18}$ edges.
\end{theorem}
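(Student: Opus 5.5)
The natural route is an averaging argument over a family of candidate half-sets modelled on the extremal configuration. In the balanced complete tripartite graph with parts of size $\frac n3$, a sparsest half consists of one whole part plus half of a second part, which spans $\frac n3\cdot\frac n6=\frac{n^2}{18}$ edges. In a general $K_4$-free graph $G$ with $e$ edges, the analogue of ``one part'' for a vertex $v$ of degree $d(v)$ is its non-neighbourhood $\overline{N}(v)=V\setminus N(v)$, and the analogue of ``the other parts'' is $N(v)$. The neighbourhood $N(v)$ is triangle-free, so by Mantel it spans at most $d(v)^2/4$ edges.

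\textbf{Sparse regime.} If $e\le \frac{2n^2}{9}$, take a uniformly random set of $\frac n2$ vertices. It spans about $e/4\le\frac{n^2}{18}$ edges in expectation, up to lower-order terms that are handled by exact binomial counting. So I will assume from now on that $G$ is dense, with $e>\frac{2n^2}{9}$.

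\textbf{Candidate sets in the dense regime.} For each vertex $v$, I form a random set $S_v$ of size $\frac n2$ as follows.
\begin{itemize}
\item If $|\overline{N}(v)|\ge \frac n2$, let $S_v$ be a random $\frac n2$-subset of $\overline{N}(v)$.
\item Otherwise, let $S_v$ be all of $\overline{N}(v)$ together with a random subset of $N(v)$ of the missing size.
\end{itemize}
The expected number of edges in $S_v$ is a linear combination of three quantities: $e(\overline{N}(v))$, $e(\overline{N}(v),N(v))$ and $e(N(v))$, with weights that are explicit functions of $d(v)/n$. I then average this expectation over $v$ with a weight function $w(d(v))$ still to be chosen.

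\textbf{Converting to global counts.} The sums over $v$ of these three quantities are counts of $3$-vertex configurations.
\begin{itemize}
\item $\sum_v e(N(v))=3T$, where $T$ is the number of triangles.
\item $\sum_v e(\overline{N}(v),N(v))$ counts induced paths $P_3$, with the endpoint $v$ distinguished.
\item $\sum_v e(\overline{N}(v))$ counts induced $K_2+K_1$ and includes the term $v$ itself.
\end{itemize}
All of these can be written using $e$, $\sum_v d(v)^2$ and $T$.

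The $K_4$-freeness then enters through two inputs. The first is the Mantel bound on each $e(N(v))$. The second is the edge-wise inequality $\sum_{z} \bigl(\mathbf 1_{z\in N(x)\cap N(y)}\bigr)$: for every edge $xy$, the set $N(x)\cap N(y)$ is independent, which gives $t(xy)\le$ the independence number of that set and, more usefully, $d(x)+d(y)-t(xy)\le n$ for the common-neighbourhood count $t(xy)$. Together these yield an upper bound on the weighted average of $\mathbb E\,e(S_v)$ as a function of the degree sequence alone.

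\textbf{Main obstacle.} The remaining step is to show that this function is at most $\frac{n^2}{18}$ for every degree sequence with $e>\frac{2n^2}{9}$. This is where I expect the difficulty to lie. For regular graphs it collapses to a one-variable inequality that is tight at $d=\frac{2n}{3}$, which is essentially the Liu--Ma argument. For irregular graphs, averaging with a fixed weight loses too much.

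My first attempt would be to choose the weight adaptively, for instance $w(d)$ supported only near the maximum or the minimum degree. I would also add a second family of candidates built from an edge $xy$, namely $\overline{N}(x)\cup(\text{part of }N(x)\setminus N(y))$. Failing that, I would certify the needed inequality by a Cauchy--Schwarz / flag-algebra style sum-of-squares bound in the variables $d(v)/n$ and $t(xy)/n$. Reiher's actual proof indicates that a genuinely more delicate weighting is required here.
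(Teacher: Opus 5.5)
\textbf{There is a genuine gap.} Your proposal does not prove the statement. The step you call the main obstacle is the entire content of the theorem, and you leave it open. More seriously, the candidate family everything rests on cannot succeed under any weighting $w$.

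Test it on the extremal graph: the balanced complete tripartite graph with parts $V_1,V_2,V_3$ of size $\frac n3$, which is dense since $e=\frac{n^2}{3}$. For $v\in V_1$ you have $d(v)=\frac{2n}{3}>\frac n2$. So $S_v=V_1\cup X$, where $X$ is a uniform $\frac n6$-subset of $V_2\cup V_3$, and
\[
\mathbb{E}\,e(S_v)=\frac{n^2}{18}+\mathbb{E}\bigl[|X\cap V_2|\,|X\cap V_3|\bigr]=\frac{n^2}{18}+\frac{n^2}{36}\cdot\frac{n-6}{4n-6}.
\]
This exceeds $\frac{n^2}{18}$ for every $n>6$ and is asymptotically $\frac{n^2}{16}$. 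All vertices are equivalent, so every weighted average gives this same value.

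The same computation refutes your regular-case claim. For a $d$-regular graph with $d>\frac n2$ one gets $\mathbb{E}\,e(S_v)\approx\frac{n^2}{4d^2}\,e(N(v))$. Mantel then gives only $\frac{n^2}{16}$, asymptotically attained by $K_{n/3,n/3,n/3}$. So this family cannot reach $\frac{n^2}{18}$ even for regular graphs.

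The missing idea is how to complete $\overline{N}(v)$ when $d(v)>\frac n2$. The completion must come from a (nearly) independent part of $N(v)$, not a uniformly random one. That forces candidates rooted at edges or cherries, built from sets like $N(x)\setminus N(y)$ and $N(x)\cap N(y)$. The latter is independent by $K_4$-freeness. Together with triangle-freeness of $N(v)$, this is the genuine $K_4$-input; your inequality $d(x)+d(y)-t(xy)\le n$ holds in every graph. Your fallback family $\overline{N}(x)\cup(\text{part of }N(x)\setminus N(y))$ is indeed tight on the extremal graph, but you give no analysis of it. Showing that some combination of such candidates stays below $\frac{n^2}{18}$ in every $K_4$-free graph is exactly the hard part.

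For comparison, the paper does not argue for this statement directly. It is an immediate corollary of the main theorem: a balanced bipartition $(A,B)$ with $e(A)+e(B)\le\frac{n^2}{9}$ has a side spanning at most $\frac{n^2}{18}$ edges. The main theorem is proved in three stages:
\begin{enumerate}
\item reductions to $6\mid n$ and to minimum degree at least $\frac{4n-1}{9}$;
\item hand proofs for graphs with two disjoint independent sets of total size $\frac{2n}{3}$, and for $\varepsilon$-nice graphs (close to $K_{n/3,n/3,n/3}$) via the spotty partition;
\item a flag-algebra SDP certificate on the degree-coloured graph showing that any counterexample must be $10^{-9}$-nice.
\end{enumerate}
The cuts fed into the SDP are exactly the kind your family lacks. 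There is a vertex cut only for low-degree (blue) vertices, whose second side is distributed exactly like your $S_v$. For high-degree (red) roots there are edge and cherry cuts built from $N(u)\setminus N(v)$ and the independent set $N(u,v)$. The colouring encodes the degree conditions that make these cuts well defined. So the ``delicate weighting'' you hope to find by hand is replaced by a stability argument plus a computer-verified sum-of-squares inequality. That route proves the stronger balanced-bipartition statement, from which the sparse-half bound is immediate.
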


Analogously to the \emph{Sparse half conjecture} and the \emph{Sparse halves conjecture},
 \autoref{sudakov} and \autoref{reiher} could be potentially generalized by a sufficiently strong bound on the balanced bipartite distance in the class of $K_4$-free graphs.
\autoref{triangle-free-impossible} naturally suggests that such a bound is unlikely to be obtained.
Intriguingly, Balogh, Clemen, and Lidick\'y~\cite{10problems} noticed that the usual interesting candidates in the $K_4$-free class are not sufficient to exclude the possibility of such a bound, which led them to conjecture that it might actually be achievable.

\begin{conjecture}[Balogh, Clemen, and Lidick\'y \cite{10problems}]\label{main}
  Every $K_4$-free graph on $n$ vertices can be made balanced bipartite by removing at most $\frac{n^2}{9}$ edges.
\end{conjecture}

In this work, we show that \autoref{main} holds.
This directly generalizes, and gives a different proof of the aforementioned results of Sudakov (\cite{bipartite}) and Reiher (\cite{sparse}).
Moreover, it confirms an unexpected discrepancy between the classes of triangle-free and $K_4$-free graphs.

The proof consists of  two major steps. 
First, we show \autoref{main} for all graphs that are sufficiently close to the complete balanced tripartite graph.
In the second step, we introduce an improved approach of describing graphs cuts in Razborov's flag algebra framework (\cite{flag-algebras}) to show that a potential counterexample would need to be very close to the complete balanced tripartite graph, which is not possible due to the previously proven statement. 

It is worth noting that while the idea of combining flag algebras with graph cuts has been used in the literature (\cite{10problems}, \cite{sparse-halves-best}), its usual application is not suitable for this problem or most partitioning problems that impose requirements on the sizes of the components in the partition.
We overcome this obstacle by applying flag algebras to a vertex-colored graph derived from the potential counterexample, where colors of vertices encode extra information that gives more control of the partitions.
We believe that this approach should be applicable to other partitioning problems and should lead to further progress on the \emph{Sparse half conjecture} and \emph{Sparse half conjectures}.

The paper is organized as follows.
In \autoref{sec:preliminaries} we introduce basic graph notation used in the paper.
\autoref{sec:simple} is devoted to proving some basic subcases of \autoref{main} that will be needed later.
In \autoref{sec:2-ind} we include the proof of \autoref{main} for graphs that contain a large pair of independent sets.
This result is used in \autoref{sec:almost} that solves the conjecture in the close neighborhood of the extremal example.
To finish the proof in the general case, we use the flag algebra framework, which we briefly introduce in \autoref{sec:flags}.
The proof of our main result  is completed in \autoref{sec:general}.

\section{Notation}\label{sec:preliminaries}

For a graph~$G$ and a vertex $v \in V(G)$, we define the \emph{open neighborhood} of $v$ in $G$ as
	$$N(v) := \{u \in V(G) : (u, v) \in E(G)\}.$$
We define the \emph{degree} of $v$ in $G$ as $\deg(v) := |N(v)|$ and the \emph{minimum degree} of $G$ as $\delta(G) := \min_{v \in V(G)} \deg(v).$

Similarly, for a pair of vertices $u, v \in V(G)$, we define the \emph{open common neighborhood} of $u$ and $v$ in $G$ as
	$$N(u, v) := \{w \in V(G) : (u, w), (v, w) \in E(G)\},$$
and the \emph{common degree} of $u, v$ in $G$ as $\deg(u, v) := |N(u, v)|$.	

For a graph~$G$ and a set of vertices $U \subseteq V(G)$, we define the \emph{open neighborhood} of $U$ in $G$ as
	$$N(U) := \bigcup \{N(v) : v \in U\} \setminus U.$$
We define $G[U]$ as the subgraph of $G$ induced on $U$, that is, a graph with vertex set $U$ and edge set $\{(u, v) \in E(G) : u, v \in U\}$.
If $U$ is equal to $V(G) \setminus \{ v \}$ for some $v \in V(G)$, we simply write $G - v$.
For simplicity, we denote $E(U) := E(G[U])$ and $e(U) := |E(U)|$. Whenever the graph $G$ is not clear from the context, we add subscript $G$ and write $e_G(U)$.

Analogously, for a graph~$G$ and sets of vertices $U, W \subseteq V(G)$, we denote the set of edges of $G$ that are between $U$ and $W$ as
	$$E(U, W) := \{(u, v) \in E(G) : u \in U, v \in W \}$$
and their number by 
	$$e(U, W) := |E(U, W)|.$$
If $U = \{v\}$ for some $v \in V(G)$, we use a short-hand syntax $E(v, W)$, and $e(v, W)$.


A \emph{$k$-vertex-colored graph} is a pair $(G, \theta)$, where $G$ is a graph and $\theta$ is a mapping from $V(G)$ to $[k]:=\{1, \ldots, k\}$.
If the mapping $\theta$ is clear from the context, we refer to the $k$-vertex-colored graph $(G, \theta)$ as $G$.


\begin{definition}
	For a graph~$G$ we define a \emph{$k$-blowup} of $G$ as the following graph~$H$:
	\begin{itemize}
		\item $V(H) := \bigcup \{ \{v_1, \ldots, v_k\}: v \in V(G)\}$,
		\item $E(H) := \bigcup \{ \{ (u_i, v_j): 1 \leq i, j \leq k \}: (u, v) \in E(G)\}$.
	\end{itemize}
  For an $\ell$-vertex-colored graph $(G, \theta)$, we define its $k$-blowup as an $\ell$-vertex-colored graph $(H, \theta')$, where $H$ is the $k$-blowup of $G$ and the mapping $\theta'$ is defined as
  $\theta'(v_i) = \theta(v)$ for every $1 \leq i \leq k$ and every $v \in V(G)$.
\end{definition}

\begin{definition}
	For a graph~$G$, we define the \emph{bipartite distance} of $G$ as
	$$\texttt{b}(G) := \min_{A \subseteq V(G)} e(A) + e(V(G) \setminus A).$$
\end{definition}

\begin{definition}
	For a graph~$G$, we define the \emph{balanced bipartite distance} of $G$ as
	$$\texttt{bb}(G) := \min_{\substack{A \subseteq V(G) \\ |A| = \lfloor \frac{n}{2} \rfloor}} e(A) + e(V(G) \setminus A).$$
\end{definition}

Intuitively, $\texttt{b}(G)$ is the number of edges one has to remove to make $G$ bipartite, while $\texttt{bb}(G)$ is the number of edges one has to remove to make $G$ balanced bipartite.

\begin{definition}
	For a graph~$G$ and a partition $(A, B)$ of $V(G)$, we say that an edge~${(u, v)}$ of $G$ is a \emph{class-edge} in the partition $(A, B)$, if $\{u, v\} \subseteq A$ or $\{u, v\} \subseteq B$.
\end{definition}

\section{Simple cases}\label{sec:simple}
\subsection{Divisibility by six}
When arguing about balanced bipartitions, divisibility of the number of vertices plays a key role.
In most of our arguments, we frequently divide the number of vertices by $2$ or~$3$.
For that reason, we devote the first couple of arguments to showing that we only need to consider graphs, whose number of vertices is a multiply of $6$.

\begin{lemma}\label{2-blowup}
	If \autoref{main} holds for a $2$-blowup of a graph~$G$, then it also holds for~$G$.
\end{lemma}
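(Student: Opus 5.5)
The plan is to convert a good balanced bipartition of the $2$-blowup into a fractional bipartition of $G$, and then round it without increasing the number of class-edges.

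First, let $G$ have $n$ vertices and let $H$ be its $2$-blowup. Then $H$ is $K_4$-free whenever $G$ is: the copies $v_1, v_2$ of a vertex $v$ are non-adjacent, so any clique of $H$ uses copies of distinct vertices of $G$, and these form a clique of $G$. By assumption, $H$ therefore has a partition $(A', B')$ with $|A'| = n$ and at most $\frac{(2n)^2}{9} = \frac{4n^2}{9}$ class-edges.

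For $v \in V(G)$ let $x_v \in \{0,1,2\}$ be the number of copies of $v$ lying in $A'$, and set $p_v := x_v/2 \in \{0, \frac12, 1\}$. Then $\sum_v p_v = \frac n2$. An edge $(u,v)$ of $G$ produces exactly $x_u x_v + (2-x_u)(2-x_v)$ class-edges of $H$. Hence the number of class-edges of $H$ equals $4f(p)$, where
$$f(p) := \sum_{(u,v) \in E(G)} \bigl(p_u p_v + (1-p_u)(1-p_v)\bigr),$$
and so $f(p) \le \frac{n^2}{9}$. For a $0/1$ vector $p$, the value $f(p)$ is exactly the number of class-edges of the partition $A = \{v : p_v = 1\}$, $V(G)\setminus A$. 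So it suffices to round $p$ to a $0/1$ vector with exactly $\lfloor n/2\rfloor$ ones, without increasing $f$.

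The key step is a rounding argument that uses the shape of $f$. Since $G$ has no loops, $f$ is multilinear. Suppose two coordinates $u \neq w$ are both fractional, and move along $p_u = a+t$, $p_w = b-t$, which preserves $\sum_v p_v$. Then $f$ becomes a polynomial in $t$ of degree at most $2$. Its $t^2$-coefficient is $0$ if $(u,w) \notin E(G)$. If $(u,w) \in E(G)$, that coefficient is $-2$, because each of $p_up_w$ and $(1-p_u)(1-p_w)$ contributes $-t^2$. In either case the restriction is concave in $t$, so one endpoint of the feasible interval $t \in [-\frac12,\frac12]$ does not increase $f$. At that endpoint $u$ or $w$ becomes integral (here both do, since they started at $\frac12$). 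Repeating this, the number of fractional coordinates strictly drops, and we stop when at most one remains.

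The step needing care is parity. If $n$ is even, no fractional coordinate remains, and we have exactly $\frac n2$ ones. If $n$ is odd, exactly one coordinate $p_z = \frac12$ remains. Since $f$ is linear in $p_z$, setting $p_z$ to $0$ or to $1$ does not increase $f$ for at least one choice, and this gives $\frac{n-1}{2}$ or $\frac{n+1}{2}$ ones respectively. In the second case we replace $p$ by $1-p$, which means swapping the two sides. This leaves $f$ unchanged by its symmetry and produces $\lfloor n/2 \rfloor$ ones. The resulting set $A$ has $|A| = \lfloor n/2\rfloor$ and at most $f(p) \le \frac{n^2}{9}$ class-edges, so $\texttt{bb}(G) \le \frac{n^2}{9}$, which is \autoref{main} for $G$.
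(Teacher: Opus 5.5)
Your proof is correct and is essentially the paper's argument written in fractional language. Moving two split vertices $u,w$ to the endpoints $t=\pm\frac12$ is exactly the paper's choice between swapping $u_2$ with $w_1$ or $u_1$ with $w_2$, and fixing the last half-coordinate by linearity is the paper's random placement of the single remaining split vertex. Your concavity computation (the $t^2$-coefficient being $0$ or $-2$) makes the ``one of the two swaps does not increase the count'' step and the $\lfloor n/2\rfloor$ parity bookkeeping cleaner than in the paper, but the route is the same.
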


\begin{proof}
	Let $n$ be the order of $G$, and let $H$ be the 2-blowup of $G$.
	From the assumption, there exists a balanced bipartition $(A, B)$ of $H$ with at most $4n^2/9$
	 class-edges.
	W.l.o.g.~we may assume that for any $v \in V(G)$, whose copies belong to different parts of the partition $(A, B)$, it holds $v_1 \in A$ and $v_2 \in B$.

	Now, assume there exist two different vertices $u, v \in V(G)$, whose copies are in different parts of the partition.
	Let $(A', B')$ be the partition obtained by swapping $u_2$ with $v_1$ in $(A, B)$.
	Notice that $e(u_1, A') = e(u_2, A')$ and $e(u_1, A') \leq e(u_1, A)$, because $u_1$ and $u_2$ are surely disconnected and $u_1$ might have been connected to $v_1$.
	Using this observation, we can form an upper bound on the number of class-edges adjacent to any copy of $u$ or $v$ in the partition $(A', B')$
	$$e(u_1, A') + e(u_2, A') + e(v_1, B) + e(v_2, B) \leq 2e(u_1, A) + 2e(v_2, B).$$
	Similarly, we can consider an analogous partition obtained by swapping $u_1$ with $v_2$, and justify that all copies of $u$ and $v$ would have at most $2e(v_1, A) + 2e(u_2, B)$ adjacent class-edges.
	In the partition $(A, B)$, the number of class-edges adjacent to every copy of $u$ or $v$ is exactly
	$$e(u_1, A) + e(v_1, A) + e(u_2, B) + e(v_2, B),$$
	so at least one of the swaps does not increase the number of class-edges.
	Hence, we may  assume that all but at most one vertex of $G$ have both of their copies on one side of the partition $(A, B)$.

	Define $A_G$ as the set of vertices of $G$ that have both of their copies in $A$, and analogously define $B_G$.
	If $(A_G, B_G)$ is a partition of $V(G)$, then it is easy to see that it has at most
	$$e(A_G) + e(B_G) = \frac{1}{4}(e(A) + e(B)) = \frac{1}{4} \cdot \frac{4n^2}{9} = \frac{n^2}{9}$$
	class-edges.
	Now, assume there exists a vertex $v$ in $V(G) \setminus (A_G \cup B_G)$ (we justified there can be at most one such vertex).
	Notice that
	$$e(A_G) + e(B_G) + \frac{1}{2}e(v, A_G) + \frac{1}{2}e(v, B_G) = \frac{1}{4}(e(A) + e(B)).$$
	As a result, if we assign $v$ randomly to $A_G$ or $B_G$, we can obtain a balanced bipartition with at most $\frac{1}{4} \cdot \frac{4n^2}{9} = \frac{n^2}{9}$ class-edges.
\end{proof}

\begin{lemma}\label{3-blowup}
	If \autoref{main} holds for a $3$-blowup of a graph~$G$, then it also holds for~$G$.
\end{lemma}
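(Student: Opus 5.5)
We follow the template of \autoref{2-blowup}. Let $n=|V(G)|$ and let $H$ be the $3$-blowup of $G$. The graph $H$ has $3n$ vertices and is $K_4$-free, since any clique in $H$ uses at most one copy of each vertex of $G$. By assumption there is a balanced bipartition $(A,B)$ of $H$ with at most $9n^2/9=n^2$ class-edges. Among all such partitions we take one minimizing the number of class-edges, and subject to that, minimizing the number of vertices $v\in V(G)$ whose three copies are \emph{split}, i.e., not all on the same side. For a split vertex we say it is \emph{$A$-heavy} if two copies lie in $A$ and \emph{$B$-heavy} otherwise.

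\textbf{Step 1 (swapping).} We show that at most one vertex of $G$ is split. Suppose $u\neq v$ are split. Copies of a single vertex are pairwise non-adjacent, so all copies of $u$ have the same neighbourhood. Hence moving a copy $u_i$ from $A$ to $B$ changes the class-edge count by $e(u_1,B)-e(u_1,A)$, with the appropriate correction for an edge $u_iv_j$ if $u_i$ and $v_j$ are moved in opposite directions. Consider the two swaps ``move a copy of $u$ from $A$ to $B$ and a copy of $v$ from $B$ to $A$'' and ``move a copy of $u$ from $B$ to $A$ and a copy of $v$ from $A$ to $B$''. Both are available because $u$ and $v$ each have copies on both sides. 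Up to the correction terms for an edge $uv$, the two changes are negatives of each other, and the correction terms are non-positive: if $uv\in E(G)$, the two moved vertices are adjacent and land on opposite sides, so the crossing edge between them is not double-counted against us. Therefore one of the two swaps does not increase the number of class-edges. Repeating such swaps, we make $u$ or $v$ unsplit. Each swap changes the ``split pattern'' of $u$ and $v$, so a potential argument shows the process terminates: for instance, swap in the direction that makes one of them unsplit, choosing the direction so that an $A$-heavy vertex receives into $A$ or a $B$-heavy vertex receives into $B$. The cleanest case split is by whether $u,v$ are both $A$-heavy, both $B$-heavy, or of mixed type. I expect this bookkeeping to be the main obstacle. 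The concern is that, unlike in the $2$-blowup case, one swap may not fully unsplit either vertex, so an averaging over a few consecutive swaps may be needed to guarantee a non-increasing sequence.

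\textbf{Step 2 (projecting back).} Let $A_G$ and $B_G$ be the vertices of $G$ with all three copies in $A$, respectively in $B$. Every edge of $G$ inside $A_G$ or inside $B_G$ yields $9$ class-edges in $H$. If there is no split vertex, then $|A_G|=|B_G|=n/2$, and $e(A_G)+e(B_G)\le n^2/9$. If there is one split vertex $w$, then balancedness forces $n$ to be odd and $|A_G|,|B_G|\in\{(n-1)/2\}$. The class-edges of $H$ equal
$$9\bigl(e(A_G)+e(B_G)\bigr)+3\bigl(a\,e(w,A_G)+(3-a)\,e(w,B_G)\bigr),$$
where $a\in\{1,2\}$ is the number of copies of $w$ in $A$. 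We then place $w$ into $A_G$ with probability $a/3$ and into $B_G$ otherwise. The expected number of class-edges of the resulting partition of $G$ is exactly one ninth of the above, hence at most $n^2/9$. Moreover both outcomes give a partition with parts of sizes $\lfloor n/2\rfloor$ and $\lceil n/2\rceil$, which is balanced. Therefore some choice achieves at most $n^2/9$, so \autoref{main} holds for $G$.
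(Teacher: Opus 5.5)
Your overall strategy (swap copies to reduce the number of split vertices without increasing the class-edge count, then project back to $G$) is the paper's. Your Step~2 is correct. It even covers odd $n$, where exactly one split vertex is forced. The paper's argument instead asserts $3\mid |A|$, which holds only for even $n$; that is enough for \autoref{parity}.

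The gap is in Step~1, exactly where you flag the ``main obstacle'', and it is more than bookkeeping. Your identity $\Delta_1+\Delta_2=-4[uv\in E(G)]\le 0$ for the two single-copy swaps is right. If $u,v$ are both $A$-heavy or both $B$-heavy, each of these swaps unsplits one of them, so that case is settled. The mixed case is different: say $u$ has one copy in $A$ and $v$ has two. Only one of the two swaps (a copy of $u$ into $B$, a copy of $v$ into $A$) makes progress, and it unsplits both. The other merely turns $u$ into an $A$-heavy and $v$ into a $B$-heavy vertex, leaving the number of split vertices unchanged. Your averaging only guarantees that one of the two swaps is non-increasing, and it may be the useless one. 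So your extremal choice of $(A,B)$ yields no contradiction. Choosing the direction for progress, as you suggest, forfeits the non-increase guarantee.

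The missing idea is to average the progressing single swap against a different move that also unsplits both vertices. The paper uses the swap of the two copies of $u$ in $B$ with the two copies of $v$ in $A$. If $\Delta_1$ and $\Delta_2$ are the changes caused by the single and the double swap, then $2\Delta_1+\Delta_2=-12[uv\in E(G)]\le 0$, so one of these two progressing moves is non-increasing.

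Conceptually, fix all other vertices and the total number $s$ of copies of $u$ and $v$ in $A$. The number of class-edges, as a function of the number $p$ of copies of $u$ in $A$, is a linear function of $p$ plus $2[uv\in E(G)]\,p(s-p)$. Hence it is concave and is minimized at an endpoint of the feasible range of $p$. At such an endpoint at least one of $u,v$ is unsplit, and both are when $s=3$.

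In your own framework this also means the following. If the useless swap is non-increasing, repeating it once more in the same direction changes the count by $4[uv\in E(G)]$ less than the first time. The two swaps together are therefore non-increasing and unsplit both $u$ and $v$. With this addition your proof is complete.
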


\begin{proof}
	Let $n$ be the order of $G$ and let $H$ be a $3$-blowup of $G$.
	From the assumption, there exists a balanced bipartition $(A, B)$ of $H$ with at most $n^2$ class-edges.
	This time, $|A|$ and $|B|$ have to be divisible by $3$, which implies that either zero or at least two vertices of $G$ can have their copies on different sides.
	Let $u$ and $v$ be two vertices of $G$ that have their copies on two different sides of the partition $(A, B)$.
	\begin{case}
		Both vertices $u$ and $v$ have the same number of copies in $A$.
	\end{case}
	W.l.o.g., we can assume that they have exactly one copy in $A$, and those copies are $u_1$ and $v_1$.
	Consider swaps $(u_1, v_2)$ and $(u_2, v_1)$ as in the proof of \autoref{2-blowup}.
	At least one of these swaps does not increase the number of class-edges and both of these swaps increase the number of vertices that have all of their copies on one side of the partition.
	Hence, in this case, we can make a swap to decrease the number of vertices of $G$ with their copies on different sides of the partition $(A, B)$ without increasing the number of class-edges.

	\begin{case}
		Vertices $u$ and $v$ have a different number of copies in $A$.
	\end{case}
	Assume, w.l.o.g., that $u_1$ is the only copy of $u$ in $A$ and $v_2$, $v_3$ are the copies of $v$ in~$A$.
	Consider the natural swap of $u_1$ with $v_1$ and the less intuitive swap of $(u_2, u_3)$ with $(v_2, v_3)$.
	Analogously to the 2-blowup case, there are at most $3e(v_2, A) + 3e(u_2, B)$ class-edges adjacent to any copy of $u$ or $v$ in the partition created by the first swap,
	at most $3e(u_1, A) + 3e(v_1, B)$ in the partition created by the second swap, and exactly $2e(v_2, A) + e(u_1, A) + 2e(u_2, B) + e(v_1, B)$ in the partition $(A, B)$.
	It holds
	$$2(3e(v_2, A) + 3e(u_2, B)) + 3e(u_1, A) + 3e(v_1, B)$$
	$$ = 3(2e(v_2, A) + e(u_1, A) + 2e(u_2, B) + e(v_1, B)), $$
	which implies that one of these swaps does not increase the number of class-edges in $(A, B)$.
	Hence, we can assume that no vertices of $G$ have their copies in both $A$ and $B$.
	We can now consider a balanced bipartition $(A_G, B_G)$ of $G$, which is simply constructed by arranging vertices according to where all of their copies are in $(A, B)$.
	It is easy to see that $(A_G, B_G)$ contains at most $\frac{1}{9} \cdot n^2 = \frac{n^2}{9}$ class-edges.
\end{proof}

\begin{theorem}\label{parity}
	If \autoref{main} holds for every $K_4$-free graph on $6n$ vertices, then it also holds for every $K_4$-free graph on $n$ vertices.
\end{theorem}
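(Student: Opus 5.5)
The plan is to combine \autoref{2-blowup} and \autoref{3-blowup}, using the fact that blowups preserve $K_4$-freeness. Let $G$ be a $K_4$-free graph on $n$ vertices. Let $G_2$ be the $2$-blowup of $G$ and $G_6$ the $3$-blowup of $G_2$.

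First we check that $G_6$ has $6n$ vertices. Since $G_2$ has $2n$ vertices, its $3$-blowup has $3 \cdot 2n = 6n$ vertices.

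Next we check that $G_6$ is $K_4$-free. The copies $v_1,\ldots,v_k$ of a single vertex $v$ in a $k$-blowup form an independent set, because $(v,v)$ is not an edge. Hence the vertices of any clique in a blowup are copies of pairwise distinct vertices of the original graph. Moreover, $u_i$ is adjacent to $w_j$ exactly when $u$ is adjacent to $w$. So a $K_4$ in a blowup projects to a $K_4$ in the original graph. Applying this twice, $G_2$ is $K_4$-free and so is $G_6$.

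By hypothesis, \autoref{main} holds for $G_6$. Since $G_6$ is the $3$-blowup of $G_2$, \autoref{3-blowup} shows that \autoref{main} holds for $G_2$. Since $G_2$ is the $2$-blowup of $G$, \autoref{2-blowup} then shows that it holds for $G$.

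There is no real obstacle here. The only points needing care are the $K_4$-freeness of blowups and the order of the two lemmas. The $3$-blowup step has to be applied to $G_2$, whose order $2n$ is what the lemma is stated for with general $n$. Each lemma is stated for an arbitrary graph, so we apply them with $2n$ and $n$ in place of $n$, respectively.
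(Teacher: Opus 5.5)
Your proposal is correct and matches the paper's proof: you take the $2$-blowup of $G$, then the $3$-blowup of that graph (which has $6n$ vertices), and apply \autoref{3-blowup} followed by \autoref{2-blowup}. Your explicit check that blowups stay $K_4$-free fills in a step the paper leaves implicit, and your order of the lemmas is the right one: the claim in \autoref{3-blowup} that both parts have size divisible by $3$ holds only when the base graph has even order, as your $G_2$ does.
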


\begin{proof}
	Let $G$ be a potential counterexample to \autoref{main} on $n$ vertices.
	Let $H$ be a $2$-blowup of~$G$, and let $F$ be a $3$-blowup of~$H$.
	The order of $F$ is  divisible by $6$, so it has a balanced partition with at most $4n^2$ class-edges.
	By \autoref{3-blowup}, $H$ has a balanced partition with at most ${4n^2}/{9}$ class-edges and by \autoref{2-blowup}, $G$ has a balanced partition with at most ${n^2}/{9}$ class-edges.
\end{proof}

\subsection{Vertices of a small degree}
We now show that  small degree vertices  are reducible.

\begin{observation}\label{min-degree-obs}
  If \autoref{main} holds for every $K_4$-free on $n$ vertices with minimum degree at least $\frac{4n - 1}{9}$, then it also holds for every $K_4$-free graph.
\end{observation}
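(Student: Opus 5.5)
The plan is to argue by deleting low-degree vertices one at a time and then reinserting them. To keep control of parity, I will carry along a slightly more flexible object than a balanced bipartition. Call a \emph{half-partition} of a graph on $m$ vertices a partition $(A,B,S)$ of its vertex set with $|S|\le 1$, where $|A|=|B|$ if $m$ is even and $S=\emptyset$ in that case, while $|S|=1$ and $|A|=|B|=\frac{m-1}{2}$ if $m$ is odd. Its \emph{cost} is
\[
  e(A)+e(B)+\tfrac12 e(S,A)+\tfrac12 e(S,B),
\]
which is the expected number of class-edges when the vertex of $S$ is sent to $A$ or $B$ uniformly at random. The two resulting bipartitions have sizes $\frac{m-1}{2}$ and $\frac{m+1}{2}$ in some order, so both are balanced in the sense of the definition of $\texttt{bb}$ (up to renaming the sides). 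Hence one of them has at most the cost many class-edges. It therefore suffices to find a half-partition of $G$ of cost at most $\frac{n^2}{9}$.

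\textbf{Step 1: deletion.} Given an arbitrary $K_4$-free $G$ on $n$ vertices, repeatedly delete a vertex whose degree in the current graph, of order $m$ say, is smaller than $\frac{4m-1}{9}$. This produces a sequence $G=G_n\supset G_{n-1}\supset\dots\supset G_{m_0}$, where $G_{m_0}$ has minimum degree at least $\frac{4m_0-1}{9}$, or is empty, in which case the claim is trivial.

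\textbf{Step 2: base.} Let $H$ be the $2$-blowup of $G_{m_0}$. It is $K_4$-free, has $2m_0$ vertices, and has minimum degree $2\delta(G_{m_0})\ge \frac{8m_0-2}{9}\ge\frac{4\cdot 2m_0-1}{9}$. So by hypothesis $H$ has a balanced bipartition with at most $\frac{4m_0^2}{9}$ class-edges. The proof of \autoref{2-blowup} then produces either an honest balanced bipartition of $G_{m_0}$, or sets $A_G,B_G$ and one split vertex $v$ with
\[
  e(A_G)+e(B_G)+\tfrac12e(v,A_G)+\tfrac12e(v,B_G)\le\frac{m_0^2}{9}.
\]
This is exactly a half-partition of cost at most $\frac{m_0^2}{9}$.

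\textbf{Step 3: reinsertion.} Suppose $G_m$ has a half-partition of cost at most $\frac{m^2}{9}$, and let $v$ be the vertex with $G_{m+1}=G_m+v$. Let $d=\deg(v)$, so that $9d<4(m+1)-1$. Since $9d$ and $4m+3$ are integers, this gives $9d\le 4m+2$, that is, $\frac d2\le\frac{2m+1}{9}=\frac{(m+1)^2-m^2}{9}$.
\begin{itemize}
\item If $S=\emptyset$, I put $S:=\{v\}$. The cost increases by exactly $\frac d2$.
\item If $S=\{w\}$, I consider the two bipartitions $(A\cup\{v\},B\cup\{w\})$ and $(A\cup\{w\},B\cup\{v\})$. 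Both are balanced, and their average cost is the old cost plus $\frac12e(v,A)+\frac12 e(v,B)+\frac12[vw\in E]\le$ the old cost plus $\frac d2$. I keep the cheaper one.
\end{itemize}
In either case the new cost is at most $\frac{m^2}{9}+\frac{2m+1}{9}=\frac{(m+1)^2}{9}$. Inducting up to $G_n=G$ and then rounding off $S$ as explained above finishes the proof.

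The step I expect to need the most care is the parity bookkeeping. The naive argument fails: deleting one vertex and applying induction to $G-v$ forces $v$ onto a prescribed side when $n$ is even, and that costs up to $d$ rather than $\frac d2$, which is too much for the threshold $\frac{4n-1}{9}$. This is why the half-partition with a single split vertex is used, together with the reuse of the end of the proof of \autoref{2-blowup} to start it. I must also check that the integrality rounding $9d\le 4m+2$ is exactly tight enough, and that the hypothesis really applies to the $2$-blowup $H$ by verifying its minimum degree relative to its own order. Both checks are done in Steps 2 and 3 above.
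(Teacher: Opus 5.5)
Your Step 3 and the final rounding of $S$ are fine, but the minimum-degree check in Step 2 is wrong: $\frac{8m_0-2}{9}\ge\frac{8m_0-1}{9}$ is false. Since $18\delta(G_{m_0})$ is even and $8m_0-1$ is odd, the hypothesis applies to the $2$-blowup $H$ exactly when $9\delta(G_{m_0})\ge 4m_0$. Your stopping rule only guarantees $9\delta(G_{m_0})\ge 4m_0-1$, and equality can occur. For example, $G_{m_0}=K_{3,4}$ has $2$-blowup $K_{6,8}$, which has $14$ vertices and minimum degree $6<\frac{55}{9}$.

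For even $m_0$ this is harmless: apply the hypothesis to $G_{m_0}$ itself and take $S=\emptyset$. For odd $m_0$, however, nothing in your argument produces a half-partition of cost at most $\frac{m_0^2}{9}$. The hypothesis for $G_{m_0}$ only yields a bipartition $(A,B)$ with $|B|=|A|+1$. Moving some $w\in B$ into $S$ changes the cost by $\frac12\bigl(e(w,A)-e(w,B)\bigr)$, and nothing forces this to be non-positive. So the base of your induction is unsupported precisely when $m_0$ is odd and $9\delta(G_{m_0})=4m_0-1$.

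The hole can be closed using integrality. At odd orders $m$, delete every vertex with $9\deg\le 4m-1$, and keep your rule at even orders. Then a terminal graph of odd order has $9\delta\ge 4m_0$, so Step 2 is valid. The price is that reinserting such a vertex into a half-partition with $S=\emptyset$ of even order $m'$ adds up to $\frac{4m'+3}{18}$. The new cost is then only bounded by $\frac{(m'+1)^2}{9}+\frac1{18}$, so carry this weaker invariant at odd orders. It does no harm: the next odd-to-even step still adds at most $\frac{4m+2}{18}$. Both there and in the final rounding you select an actual bipartition, whose number $j$ of class-edges is an integer, and $j\le\frac{k^2}{9}+\frac1{18}$ forces $9j\le k^2$.

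The paper works with a minimal counterexample $G$ and applies the hypothesis only to $G$ itself, so no blowup's minimum degree ever has to be checked. Its computation, placing $v_1,v_2$ on opposite sides of a balanced bipartition of the $2$-blowup of $G-v$, is your Step 3 in blowup language. The balanced bipartition of the $2$-blowup of $G-v$ that it draws from minimality, though, comes from blowing up a balanced bipartition of $G-v$ only when $n-1$ is even. For even $n$ it is exactly the half-partition of $G-v$ that you identified as the delicate point. So your parity bookkeeping addresses a genuine issue, and with the repair above it yields a complete proof.
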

\begin{proof}
  Assume, for contradiction, that \autoref{main} holds for every $n$-vertex $K_4$-free graph with  minimum degree at least $\frac{4n - 1}{9}$, but not for all $K_4$-free graphs.
  Let $G$ be the smallest (with respect to the size of the vertex set) $K_4$-free graph $G$ which  does not satisfy \autoref{main}. Then by \autoref{2-blowup} its
   $2$-blowup also does not satisfy \autoref{main}.
By our assumption,   $G$  has at least one vertex $v$ of degree at most ${(4n - 2)}/{9}$, where $n$ is the number of vertices in $G$.
  The minimality of $G$ implies that the $2$-blowup of $G - v$  has a balanced bipartition $(A, B)$ with at most 
  ${(2n - 2)^2}/{9}$ class-edges.
  By randomly assigning $v_1$ to $A$ or $B$, and $v_2$ to the other part, we maintain balancedness and obtain a randomized balanced bipartition of the 2-blowup of $G$, which in expectation has at most
  \[
    \frac{(2n - 2)^2}{9} + 2 \cdot \frac{1}{2} \cdot \frac{2(4n - 2)}{9} = \frac{4n^2}{9}
  \]
  class-edges.
  This implies that there exists a sufficiently sparse balanced bipartition of the 2-blowup of $G$, and hence of $G$, a contradiction.
\end{proof}

%

\subsection{Tripartite graphs}

\autoref{main} has been proven in \cite{10problems} to hold for tripartite graphs, and prior to this paper, this was the strongest known partial result on the conjecture.
We provide a slightly simplified proof, whose main idea is also relevant in subsequent cases.

\begin{theorem}[Balogh, Clemen and Lidick\'y \cite{10problems}]\label{tripartite}
	If $G$ is a $K_4$-free tripartite graph on $n$ vertices, then $\texttt{bb}(G) \leq \frac{n^2}{9}$.
\end{theorem}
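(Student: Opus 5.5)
The plan is to write down an explicit balanced bipartition that depends only on the tripartition. A random choice inside one part then bounds the number of class-edges by a function of the three part sizes, and a short optimization finishes the proof.

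Let $X, Y, Z$ be the parts of $G$, with sizes $a \ge b \ge c$ and $a+b+c=n$. First I reduce to the case where $n$ is even (indeed $6 \mid n$). A blowup of a $K_4$-free tripartite graph is again $K_4$-free and tripartite. So, exactly as in the proof of \autoref{parity}, I apply the statement to the $2$-blowup of the $3$-blowup of $G$ and then use \autoref{3-blowup} and \autoref{2-blowup}. Hence I may assume $n/2$ is an integer. I also use throughout that $e(U,W) \le |U||W|$, so every bound below is stated for the complete tripartite graph with these part sizes. This is the worst case, since edges only increase the count.

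\emph{Case $a \le n/2$.} Set $s := n/2 - a$, so $0 \le s \le b$ because $a+b \ge 2n/3 \ge n/2$. Choose a uniformly random $s$-subset $S \subseteq Y$, and put $A := X \cup S$ and $B := (Y\setminus S) \cup Z$. The class-edges are exactly $E(X,S) \cup E(Y\setminus S, Z)$. Their number is at most
\[
  s a + (b-s) c = \left(\tfrac n2 - a\right)a + \left(\tfrac n2 - c\right)c ,
\]
using $b - s = a + b - n/2 = n/2 - c$. It remains to show that $f(a,c) := (n/2-a)a + (n/2-c)c \le n^2/9$ under the constraints $a \ge n/3$, $c \le b$ (equivalently $a + 2c \le n$) and $a \le n/2$.

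The first term is decreasing in $a$ for $a \ge n/4$, so for fixed $c$ we may push $a$ down to its minimum $n - 2c$, i.e. take $b = c$. Normalizing $n = 1$, this gives $h(c) = (1-2c)(2c - \tfrac12) + c(\tfrac12 - c) = -5c^2 + \tfrac72 c - \tfrac12$. This function is increasing for $c \le 0.35$, and the constraints force $c \le 1/3$. Hence its maximum is $h(1/3) = 1/9$, which is attained by the balanced complete tripartite graph, as expected. (When $n - 2c > n/2$ is not the binding constraint, the same monotonicity argument applies with $a$ pushed to its actual lower bound.)

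\emph{Case $a > n/2$.} Take $A \subseteq X$ of size $n/2$ and $B := V(G)\setminus A$. Class-edges lie only inside $B$, so their number is at most $(a - n/2)(b+c) + bc \le (n-a)\bigl(a - \tfrac n2 + \tfrac{n-a}{4}\bigr)$. This is easily checked to be at most $n^2/16 < n^2/9$ on the range $n/2 < a \le n$.

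The main obstacle is the careful two-variable optimization in the first case, in particular making sure the boundary $b=c$ is the right one to reduce to and that no interior point exceeds $1/9$. I expect this to be routine calculus once the bound $f(a,c)$ is in hand.
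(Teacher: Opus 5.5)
\textbf{The step that fails is the optimization in the case $a\le n/2$. The claimed inequality is false, not merely unproved.}

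For fixed $c$, the constraint $b\ge c$ gives $a\le n-2c$. So $a=n-2c$ (that is, $b=c$) is the \emph{largest} admissible $a$, not the smallest. The lower bound on $a$ comes from $a\ge b$, i.e.\ $a\ge\frac{n-c}{2}$, which you left out of your list of constraints. Since $(\frac n2-a)a$ is decreasing for $a\ge n/4$, the worst case for fixed $c$ is $a=b=\frac{n-c}{2}$. There, with $n=1$, $f=\frac{3c}{4}-\frac{5c^2}{4}$, which is maximized at $c=\frac{3}{10}$ with value $\frac{9}{80}>\frac19$.

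Concretely, take $G=K_{21,21,18}$ with $n=60$, so $6\mid n$ and your reduction does not help. Your partition splits a part of size $21$ and has exactly $9\cdot 21+12\cdot 18=405>400=\frac{n^2}{9}$ class-edges. So always splitting the middle part is the wrong choice. Your computation along $b=c$ only checks the boundary where $f$ is smallest.

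\textbf{How to repair it.} Split the smallest part instead. Take $S\subseteq Z$ with $|S|=\frac n2-a$, and set $A=X\cup S$, $B=Y\cup(Z\setminus S)$. This is well defined because $b\le a\le\frac n2$. The count is at most $(\frac n2-a)a+(\frac n2-b)b$. For fixed $a$ this is decreasing in $b$ on $[\frac{n-a}{2},a]$, since $\frac{n-a}{2}\ge\frac n4$. So the worst case is now $b=c$, which gives $\frac{3a}{4}-\frac{5a^2}{4}$ for $n=1$. This is decreasing on $[\frac13,\frac12]$ and equals $\frac19$ at $a=\frac13$.

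The paper takes a different route: it chooses the part to be split uniformly at random among the three. Averaging the three bounds gives exactly $\frac19-\frac23\sum_i(x_i-\frac13)^2$, which sidesteps the question of which part to split.

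\textbf{A minor slip.} In your case $a>n/2$, the bound $(n-a)\bigl(a-\frac n2+\frac{n-a}{4}\bigr)$ peaks at $a=\frac{2n}{3}$ with value $\frac{n^2}{12}$, not $\frac{n^2}{16}$. This is harmless since $\frac{n^2}{12}<\frac{n^2}{9}$, and it matches the Tur\'an bound used in the paper.
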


\begin{proof}
	Let $A$, $B$, and $C$ be the $3$-partition of $G$ with respective relative sizes (normalized by $n$) denoted as $a$, $b$, and $c$.
	If any of $a$, $b$ or $c$ is larger than ${1}/{2}$, then the wanted sparse balanced partition exists, because by Tur\'an's theorem the set of ${n}/{2}$ vertices outside an independent set of size ${n}/{2}$ contains at most ${n^2}/{12}$ edges.

	In the other cases, we may assume that the graph is complete tripartite as adding the remaining edges does not simplify the problem.
	Consider the following random process of obtaining a balanced partition:
	\begin{enumerate}
		\item Randomly choose one of $A$, $B$ or $C$.
		\item Increase the other sets to size $\frac{n}{2}$ by randomly adding vertices from the chosen set.
	\end{enumerate}
	Assume that we chose $A$ in the first step.
	Then, the expected number of class-edges (normalized by $n^2$) is exactly
	$$\left(\frac{\frac{1}{2} - b}{a}\right)\cdot ab + \left(\frac{\frac{1}{2} - c}{a}\right)\cdot ac = \left(\frac{1}{2}-b\right)b + \left(\frac{1}{2}-c\right)c.$$

	If we now incorporate the random choice from the first step into the formula, the expected number of class-edges changes to (keep in mind $a + b + c = 1$)
	$$\frac{1}{3} \left[\left(\frac{1}{2} - b\right) b + \left(\frac{1}{2} - c\right) c\right] +
		\frac{1}{3} \left[\left(\frac{1}{2} - a\right) a + \left(\frac{1}{2} - c\right) c\right]
		+ \frac{1}{3} \left[\left(\frac{1}{2} - a\right) a + \left(\frac{1}{2} - b\right) b\right] $$
$$		= \frac{1}{9} - \frac{2}{3}\left(a - \frac{1}{3}\right)^2 - \frac{2}{3}\left(b - \frac{1}{3}\right)^2 - \frac{2}{3}\left(c - \frac{1}{3}\right)^2 \leq \frac{1}{9}.$$
	Hence, there exists a balanced partition with at most $\frac{n^2}{9}$ class-edges, so ${\texttt{bb}(G) \leq \frac{n^2}{9}}$.
\end{proof}

\section{Graphs with a large pair of independent sets}\label{sec:2-ind}
In this section, we prove the following relaxed version of \autoref{main}:
\begin{theorem}\label{2-ind}
	If $G$ is a $K_4$-free graph on $n$ vertices with two disjoint independent sets ${I_1 \subseteq V(G)}$, $I_2 \subseteq V(G)$ satisfying $|I_1| + |I_2| \geq \frac{2n}{3}$, then $\texttt{bb}(G) \leq \frac{n^2}{9}$.
\end{theorem}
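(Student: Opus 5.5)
By \autoref{parity} I would assume that $6 \mid n$. Write $|I_1| = an$, $|I_2| = bn$ and $R := V(G) \setminus (I_1 \cup I_2)$. Moving vertices from $I_1 \cup I_2$ into $R$ keeps both sets independent, so I may assume $a + b = \frac{2}{3}$ exactly and $|R| = \frac{n}{3}$.

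If $a \ge \frac{1}{2}$ (or $b \ge \frac12$), I would put $\frac{n}{2}$ vertices of that independent set on one side. The other side has $\frac n2$ vertices, is $K_4$-free, and so by Tur\'an's theorem spans at most $\frac{n^2}{12}$ edges, as in the proof of \autoref{tripartite}. Hence I may assume $\frac16 \le a, b \le \frac12$.

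The plan is to imitate the randomized proof of \autoref{tripartite}, treating $R$ as a third ``part'' that is not independent. I would fix a bipartition $(R_1,R_2)$ of $R$ minimizing $e(R_1)+e(R_2)$. By \autoref{sudakov} applied to $G[R]$, this has at most $\frac{|R|^2}{9}=\frac{n^2}{81}$ class-edges. I would then consider a small family of random balanced partitions:
\begin{itemize}
\item[(i)] Keep $I_1$ on side $A$ and $I_2$ on side $B$, and fill both up to $\frac n2$ with vertices of $R$. Use $(R_1,R_2)$ as a guide, and add random vertices of $R$ as needed for balance.
\item[(ii)] Keep $I_1$ and a random part of $R$ on one side, and the rest of $R$ together with a random part of $I_2$ needed for balance on the other.
\item[(iii)] The symmetric version of (ii) with the roles of $I_1$ and $I_2$ exchanged.
\end{itemize}
For each of these I would write the expected number of class-edges, normalized by $n^2$. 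The relevant parameters are $a$, $b$, $x_i := e(I_i,R)/n^2$, $e(R_1)+e(R_2)$ and $e(R)$. Edges inside $I_1$ or $I_2$ never appear, and edges between $I_1$ and $I_2$ appear only through the random vertices moved across. The main quantity is then a convex combination of the three expectations, with weights chosen depending on $a$ and $b$.

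The difficulty is that a naive estimate fails. Plugging in worst-case bounds $x_i \le a/3$ or $b/3$ and $e(R) \le \frac{n^2}{27}$ separately is too lossy, since the complete tripartite graph already makes the tripartite computation tight. So I need inequalities coupling these quantities, coming from $K_4$-freeness. The ones I would try first are:
\begin{itemize}
\item the common neighbourhood of every edge of $R$ is an independent set;
\item each $v \in R$ has a triangle-free neighbourhood, so the neighbours of $v$ in $R$ and the neighbours of $v$ in $I_1 \cup I_2$ cannot all be large simultaneously. Summing over edges of $R$ gives roughly $e(R) + \frac{1}{2}\sum_{v\in R}\deg_{I_1\cup I_2}(v)$-type bounds.
\end{itemize}
I would also use the minimality of $(R_1,R_2)$: every vertex of $R$ has at least as many neighbours across the cut $(R_1,R_2)$ as on its own side. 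This controls how many class-edges inside $R$ are created by the random reassignments.

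The main obstacle is this final optimization. I need to choose the mixing weights, possibly depending on $a$, $b$ and $x_i$, so that the resulting polynomial upper bound is at most $\frac19$ on the whole feasible region, with equality only near $a = b = \frac13$ and $R$ independent and complete to $I_1\cup I_2$. I expect to have to split into cases according to whether $x_1 + x_2$ is close to its maximum $\frac{a+b}{3}$ (then $e(R)$ is forced to be small and partition (i) alone suffices) or not (then (ii) and (iii) gain enough).
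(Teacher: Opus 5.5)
\textbf{There is a genuine gap.} Your reductions (to $6\mid n$, to $|I_1|+|I_2|=\frac{2n}{3}$, and away from an independent set of size $\frac n2$) agree with the paper. Everything after them, however, is a plan rather than a proof. The step you call the main obstacle is the whole content of the theorem, and no coupling inequality, weight or case is actually established.

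The set-up also cannot be completed as written. The expected cost of partition (i) guided by a minimum cut $(R_1,R_2)$ of $G[R]$ is not a function of $a,b,x_1,x_2,e(R_1)+e(R_2),e(R)$. It depends on how $I_1$ and $I_2$ attach to $R_1$ and $R_2$ separately, on the uncontrolled sizes $|R_1|,|R_2|$, and on $e(R_1,R_2)$. Moreover, that cut ignores the edges between $R$ and $I_1\cup I_2$, which carry most of the cost.

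Worse, the case split you anticipate is false. Let $|I_1|=|I_2|=\frac n3$ and choose $J_i\subseteq I_i$ with $|J_1|=|J_2|=j$, where $\frac n6+m<j<\frac n4$. Put in all edges between $I_1$ and $I_2$ except those between $J_1$ and $J_2$. Let $R=U\cup W$ with $|W|=m$ and $|U|=\frac n3-m$, where $U$ is complete to $W\cup I_1\cup I_2$ and $W$ is complete to $J_1\cup J_2$. This graph has the following properties:
\begin{itemize}
\item It is $K_4$-free, being a blow-up of a graph in which $U$ is universal and the rest is triangle-free.
\item Its independence number is $2j<\frac n2$, so your Tur\'an case does not apply.
\item $G[R]$ is triangle-free.
\item $e(I_1\cup I_2,R)>\frac{2n^2}{9}-\frac{mn}{3}$, so $x_1+x_2$ is as close to its maximum as you like.
\end{itemize}
Nevertheless, every balanced partition with $I_1\subseteq A$ and $I_2\subseteq B$ has at least $\frac{n^2}{9}+m\bigl(j-\frac n6-m\bigr)>\frac{n^2}{9}$ class-edges, whatever rule is used to split $R$. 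A random split of $I_2$ between the $I_1$-side and the $R$-side does succeed here. So this refutes your case split rather than the whole strategy, but it shows the decisive information is local: every edge between $U$ and $W$ has common neighbourhood $J_1\cup J_2$, which meets both $I_1$ and $I_2$ in more than $\frac n6$ vertices.

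This per-edge information is exactly what the paper's proof uses, in three steps.
\begin{enumerate}
\item Since $N(u,v)$ is independent for every edge $uv$ of $R$, one of three things holds: $|N(u,v)\cap I_1|\le\frac n2-|I_2|$; or $|N(u,v)\cap I_2|\le\frac n6$; or $I_2$ together with $\frac n2-|I_2|$ vertices of $N(u,v)\cap I_1$ is an $\frac n2$-set spanning at most $\frac{n^2}{36}-(|I_2|-\frac n3)^2$ edges. In the last case Tur\'an's theorem on the complement finishes (\autoref{Neighborhoods}); in the example above this partition has well below $\frac{n^2}{9}$ class-edges.
\item Otherwise every edge of $R$ has a cheap side. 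If $G[R]$ is triangle-free, a greedy procedure places chosen edges of $R$ on their cheap side and fills randomly once the rest of $R$ is independent. Triangle-freeness (each leftover vertex sees at most half of a perfectly matched set, and $e(X)\le |X|^2/4$) then gives $\frac{n^2}{9}-\frac34(|I_2|-\frac n3)^2$ (\autoref{triangle-free}).
\item Triangles inside $R$ are removed by the closure $\texttt{cl}(G)$. It isolates a maximal family of disjoint triangles of $R$ and joins their vertices completely to $I_1\cup I_2$. Transferring the partition back costs at most $\frac34(|I_2|-\frac n3)^2$, because $K_4$-freeness makes every other vertex miss a vertex of each triangle. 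This exactly uses up the slack from the triangle-free case (\autoref{pay for triangles}).
\end{enumerate}
Your sketch is missing all three ideas: a per-edge dichotomy in place of aggregated global inequalities, an adaptive placement of the edges of $R$ instead of a fixed minimum-cut guide, and a separate mechanism for triangles in $R$. Without them, or a substitute that you actually verify on the whole feasible region, the argument does not prove the statement.
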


Notice that \autoref{2-ind} is strictly stronger than \autoref{tripartite}, as the union of the  two largest parts of tripartite partition is always greater than ${2n}/{3}$.
\autoref{2-ind} is non-trivial, as a random extension of $I_1$ and $I_2$ is insufficient even when $|I_1| = |I_2|$.

By \autoref{parity}, it is enough to consider only graphs of order divisible by 6.
If $G$ has a pair of independent sets of total size \emph{at least} ${2n}/{3}$, it also has a pair of independent sets of total size \emph{exactly} ${2n}/{3}$.

\begin{definition}
	A graph~$G$ on $n$ vertices is \emph{convenient} if it is $K_4$-free and contains two disjoint independent subsets $I_1$, $I_2$ satisfying $|I_1| + |I_2| = \frac{2n}{3}$. W.l.o.g., we always assume that $|I_2| \geq |I_1|$, which implies that $|I_2| \geq \frac{n}{3}$.  We    write  $c_1 := \frac{n}{2} - |I_2|,\  c_2 := \frac{n}{6}$ and  $R:= V(G) \setminus (I_1 \cup I_2)$.
\end{definition}

\begin{observation}\label{Neighborhoods}
If $G$ is a convenient graph on $n$ vertices, then there exists a balanced bipartition of $G$ with at most $\frac{n^2}{9} - \left(|I_2|-\frac{n}{3}\right)^2$ class-edges, or for every edge~$(u, v) \in E(R)$ at least one of the following inequalities holds:
	\begin{itemize}
		\item $|N(u, v) \cap I_1| \leq c_1$,
		\item $|N(u, v) \cap I_2| \leq c_2.$
	\end{itemize}
\end{observation}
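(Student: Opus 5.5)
The plan is to prove the contrapositive. Suppose some edge $(u,v)\in E(R)$ satisfies both $|N(u,v)\cap I_1|>c_1$ and $|N(u,v)\cap I_2|>c_2$. From this I want to build a balanced bipartition with at most $\frac{n^2}{9}-\left(|I_2|-\frac{n}{3}\right)^2$ class-edges.

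The structural input is that $G$ is $K_4$-free. Hence the common neighborhood $N(u,v)$ of an edge is an independent set, since an edge inside it would close a $K_4$ with $u,v$. So we may fix
\[
S\subseteq N(u,v)\cap I_1,\ |S|=c_1, \qquad T\subseteq N(u,v)\cap I_2,\ |T|=c_2 ,
\]
and then $S\cup T$ is independent, with $|S|+|T|=\frac{2n}{3}-|I_2|=|I_1|$. This gives a third large independent set "interleaving" $I_1$ and $I_2$. The aim is to mimic the proof of \autoref{tripartite}, where averaging over the three parts produced the deficit term $-\frac23\sum(x-\frac13)^2$. Here the role of the third part is played by a set we can control.

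I will consider a small family of balanced bipartitions and average them. The natural candidates are:
\begin{itemize}
\item $A_1=I_2\cup S$, which has size $|I_2|+c_1=\frac{n}{2}$ and whose inside edges are only those of $E(I_2,S)$;
\item $A_2=I_1\cup T\cup W$ with $W\subseteq I_2\setminus T$ chosen at random of the size needed for balance; the set $I_1\cup T$ spans only $E(I_1\setminus S,T)$ edges inside, because $S\cup T$ is independent;
\item a third partition in which $R$ is placed together with $S\cup T$ (or with $T$), chosen randomly to fill up to $\frac n2$.
\end{itemize}
For each partition I write the number of class-edges as a combination of $e(I_1,I_2)$, $e(I_1,R)$, $e(I_2,R)$ and $e(R)$. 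Several of these combinations contain terms of opposite signs, so the idea is to choose averaging weights that cancel the unknown cross terms. The terms that remain are then bounded trivially by products of part sizes, e.g. $e(X,Y)\le |X||Y|$.

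The main obstacle is controlling $e(R)$ together with the cross terms between $R$ and $I_1\cup I_2$. Here I would use again that every vertex of $R$ has a triangle-free neighborhood, and that the removed-edge counts from $S$ and $T$ are "saved" precisely because $S,T\subseteq N(u,v)$ are jointly independent. After choosing the weights, the computation should reduce to the quadratic in $x=|I_2|/n$ with the variable $|I_1|/n=\frac23-x$. The goal is to show that the averaged bound is at most $\frac19-\left(x-\frac13\right)^2$, in analogy with the completion-of-squares step in \autoref{tripartite}. If the naive averaging does not close, I would first try weighting the partitions by the parameters $c_1$ and $c_2$ themselves, which are linear in $x$.
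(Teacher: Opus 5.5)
Your partition $I_2\cup S$ versus its complement is the right one, but you do not notice that it already finishes the proof. The averaging scheme you propose in place of the missing step cannot work.

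\textbf{The missing idea.} Use $K_4$-freeness globally on the other side. The set $V(G)\setminus(I_2\cup S)=(I_1\setminus S)\cup R$ has exactly $\frac n2$ vertices and induces a $K_4$-free graph, so by Tur\'an's theorem it spans at most $\frac{n^2}{12}$ edges. Inside $I_2\cup S$ the only edges run between $I_2\setminus T$ and $S$. You need $e(S,T)=0$ here: stopping at $E(I_2,S)$ and bounding it by $|I_2|c_1$ loses exactly $c_1c_2$. This gives $e(I_2\cup S)\le(|I_2|-c_2)c_1=\frac{n^2}{36}-\left(|I_2|-\frac n3\right)^2$. Adding the two bounds gives precisely $\frac{n^2}{9}-\left(|I_2|-\frac n3\right)^2$. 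No further partitions, weights, or separate control of $e(R)$ and the cross terms are needed.

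\textbf{Why your route fails.} Bounding the pieces of $(I_1\setminus S)\cup R$ separately, $e(I_1\setminus S,R)\le\frac n6\cdot\frac n3$ and $e(R)\le\frac13\left(\frac n3\right)^2$, already gives $\frac{5n^2}{54}>\frac{n^2}{12}$. More decisively, consider the following graph, with $|I_2|=\frac n3$ and $18\mid n$:
\begin{itemize}
\item let $S$, $I_1\setminus S$, $T$, $I_2\setminus T$ each have size $\frac n6$;
\item include all $I_1$--$I_2$ edges except those between $S$ and $T$;
\item join $R$ completely to $I_1\cup I_2$;
\item let $G[R]$ be complete balanced tripartite.
\end{itemize}
This graph contains $K_4$'s, but it satisfies every constraint your plan actually quantifies: $I_1$, $I_2$ and $S\cup T$ are independent, $G[R]$ is $K_4$-free, and all the product bounds are attained. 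Yet a direct computation shows that every balanced bipartition of it has at least $\frac{13n^2}{108}>\frac{n^2}{9}$ class-edges. So no choice of partitions and averaging weights can derive the observation from bounds of this type.

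The $K_4$-freeness linking $R$ with $I_1\setminus S$ has to be used. Your remark about triangle-free neighbourhoods of vertices in $R$ comes with no mechanism for doing so, whereas Tur\'an's theorem on the $\frac n2$-set $(I_1\setminus S)\cup R$ supplies it in one line.
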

\begin{proof}
	Assume that there is an edge $(u,v) \in E(R)$, for which both of these inequalities  are false.
	Let $B$ be a subset of $N(u, v) \cap I_2$ of size exactly $c_2$, and let $A$ be a subset of $N(u, v) \cap I_1$ of size exactly $c_1$.
	Since $G$ is $K_4$-free, we have  $e(N(u, v)) = 0$, and, consequently, $e(A, B) = 0$.
	The set $I_2 \cup A$ is of size exactly $\frac{n}{2}$, and it  has at most
	$$e(I_2 \setminus B, A) \leq (|I_2| - |B|)|A| = (|I_2| - c_2)c_1 =
	\left(|I_2| - \frac{n}{6}\right)\left(\frac{n}{2}- |I_2| \right)
	= \frac{n^2}{36} - \left(|I_2| - \frac{n}{3}\right)^2$$
	edges.
	Additionally, by Tur\'an's theorem we have $e(V(G)\setminus(I_2 \cup A)) \leq {n^2}/{12}$. 
	Therefore, the total number of class-edges in the obtained balanced bipartition is at most $\frac{n^2}{9} - \left(|I_2| - \frac{n}{3}\right)^2$.
\end{proof}

\begin{lemma}\label{triangle-free}
	If $G$ is a convenient graph on $n$ vertices, such that $G[R]$ is triangle-free, then there exists a balanced bipartition of $G$ with at most $\frac{n^2}{9} - \frac{3}{4}\left(|I_2|-\frac{n}{3}\right)^2$ class-edges.
\end{lemma}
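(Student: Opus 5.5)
Write $x := |I_2| - \frac{n}{3} \geq 0$, so $|I_1| = \frac{n}{3} - x$, $|R| = \frac{n}{3}$, $c_1 = \frac{n}{6} - x$, and by Mantel's theorem $e(R) \leq \frac{|R|^2}{4} = \frac{n^2}{36}$. First apply \autoref{Neighborhoods}. If it yields a balanced bipartition with at most $\frac{n^2}{9} - x^2$ class-edges we are done, since $x^2 \geq \frac{3}{4}x^2$. So we may assume that every edge $uv \in E(R)$ satisfies $|N(u,v) \cap I_1| \leq c_1$ or $|N(u,v)\cap I_2| \leq c_2$.

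The candidate bipartitions are $A = I_2 \cup S$ and $B = I_1 \cup (R \setminus S)$ with $S \subseteq R$, $|S| = c_1 = \frac{n}{6}-x$; then $|A| = |B| = \frac{n}{2}$. For $v \in R$ set $a_v := e(v, I_1)$ and $b_v := e(v, I_2)$. The class-edges number
\[
  \sum_{v \in S} b_v + \sum_{v \in R\setminus S} a_v + e(S) + e(R \setminus S).
\]
Choose $S$ uniformly at random, so $p := \frac{c_1}{|R|} = \frac12 - \frac{3x}{n}$ and $q := 1-p$. With the trivial bounds $a_v \leq |I_1|$ and $b_v \leq |I_2|$, the linear part is at most $\frac{n^2}{9} - 2x^2$. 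The quadratic part is about $(p^2+q^2)\,e(R) \leq \frac{n^2}{72} + \frac{x^2}{2}$. Together this gives $\frac{n^2}{9} + \frac{n^2}{72} - \frac32 x^2$, which already has the right dependence on $x$ but overshoots by $\frac{n^2}{72}$. So the whole task is to recover $\frac{n^2}{72}$ from the interaction between the edges of $R$ and the degrees $a_v, b_v$. This is the step I expect to be the main obstacle.

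To recover it, use $K_4$-freeness and the reduction above. For an edge $uv \in E(R)$ we have $a_u + a_v \leq |I_1| + |N(u,v)\cap I_1|$ and $b_u + b_v \leq |I_2| + |N(u,v) \cap I_2|$. By the dichotomy, one of the common-neighbourhood terms is at most $c_1$ (respectively $c_2$), so on each edge $a_u+a_v$ is at most $|I_1| + c_1$ or $b_u+b_v$ is at most $|I_2|+c_2$. Summing over edges gives
\[
  \sum_{v\in R} \deg_R(v)\,\bigl(a_v+b_v\bigr) \;\leq\; e(R)\Bigl(\tfrac{4n}{3} - \min\{\,|I_1|-c_1,\ |I_2|-c_2\,\}\Bigr),
\]
so vertices of large $R$-degree lose a constant fraction of their possible neighbours in $I_1 \cup I_2$. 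Since $\sum_v \deg_R(v) = 2e(R)$, a convexity argument in the degrees should then show that $\sum_v (a_v+b_v)$ drops below $|R|(|I_1|+|I_2|)$ by at least about $\frac{3}{2}e(R)$ for near-regular $R$. That is enough to absorb the $\frac{1}{2}e(R) \leq \frac{n^2}{72}$ surplus.

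To make this robust when $R$ is irregular, I would not take $S$ uniform. Instead I would use a biased choice: put $v$ into $S$ with probability depending on $b_v - a_v$, or choose $S$ to contain the $c_1$ vertices minimising $b_v - a_v$, and keep the random choice only to control $e(S) + e(R\setminus S)$. Then I would optimise the resulting linear program over the parameters (the distribution of $(a_v,b_v)$, the degrees in $R$, and $x$), checking that the final bound is $\frac{n^2}{9} - \frac34 x^2$. The constant $\frac34$ rather than $\frac32$ suggests that some of the $x^2$ gain must be spent on this edge-versus-degree trade-off. If the averaging fails in some extreme configuration, for instance $R$ a complete balanced bipartite graph, I would treat that case directly: the two sides of $R$ are independent sets, so the partition can be built around them instead.
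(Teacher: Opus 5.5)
Your family of bipartitions is the right one: the paper's final partition is also $I_2\cup S$ against $I_1\cup(R\setminus S)$ with $|S|=c_1$. Your bookkeeping for a uniform $S$, with overshoot $\frac{n^2}{72}$, is also correct. The gap is that the step recovering this overshoot is never carried out, and the averaging route you sketch does not work. The dichotomy only bounds the degree-weighted sum $\sum_v\deg_R(v)(a_v+b_v)$. So the deficit can concentrate on a few high-degree vertices and fall well below the $e(R)$ needed at $x=0$.

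Concretely, take $x=0$ and $I_1=X\cup Y$ with $|X|=|Y|=\frac n6$. Let $R=K_{n/12,\,n/4}$ with parts $P,Q$. Join $P$ completely to $Q\cup X\cup I_2$, $Q$ completely to $I_1\cup I_2$, and $Y$ completely to $I_2$. This graph is $K_4$-free: the five classes are independent, and only $P$, $Q$, $I_2$ are joined to at least three other classes. It is convenient, $G[R]$ is triangle-free, and every edge of $R$ has $N(u,v)\cap I_1=X$, of size exactly $c_1$. Yet for large $n$ a uniform $S$ has about $\frac{n^2}{18}+\frac{7n^2}{144}+\frac{n^2}{96}=\frac{33n^2}{288}>\frac{n^2}{9}$ class-edges in expectation.

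Your $\frac32e(R)$ is also wrong for regular $R$. For $R=K_{n/6,n/6}$ and $x=0$ the guaranteed deficit is exactly $e(R)$, which just suffices. So the balanced complete bipartite case you single out is harmless, and unbalanced ones are the real obstruction. Everything therefore rests on the biased choice of $S$, for which you give no estimate. Taking the $c_1$ vertices with smallest $b_v-a_v$ optimises only the linear part and gives no control of $e(S)+e(R\setminus S)$. The linear program from which $\frac34$ should emerge, including the unequal weights $p,q$ for $x>0$, is never set up.

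The missing idea is to place \emph{edges} of $R$ rather than vertices. The paper starts from $A=I_1$, $B=I_2$ and greedily takes an edge $uv$ of the unplaced part of $R$. It puts both endpoints into $A$ if $|N(u,v)\cap I_1|\le c_1$, and into $B$ otherwise, in which case $|N(u,v)\cap I_2|\le c_2$. It stops when one side reaches $\frac n2$ (the rest goes to the other side) or when the unplaced part of $R$ has no edges (the rest is distributed randomly). Placing pairs turns the dichotomy directly into $e(I_1,\{u,v\})\le|I_1|+c_1$, resp.\ $e(I_2,\{u,v\})\le|I_2|+c_2$, for each pair placed on that side. This is precisely the coupling between placement and degrees that a vertex-by-vertex choice lacks.

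In the last stopping case the leftover set is independent. The set $B'$ of vertices placed in $B$ is covered by a perfect matching of the triangle-free graph $G[R]$, so every leftover vertex has at most $\frac{|B'|}{2}$ neighbours in $B'$, while Mantel bounds $e(B')$; the same holds for $A'$. Computing in each of the three stopping cases, and maximising over $|A'|,|B'|$ in the last, gives $\frac{n^2}{9}-\frac34\bigl(|I_2|-\frac n3\bigr)^2$.
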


\begin{proof}
	Run the following greedy algorithm starting with $A = I_1$, $B = I_2$ and $R' = R$.
	\begin{enumerate}
		\item If there are no edges in $R'$, randomly assign vertices remaining in $R'$ to $A$ and $B$, such  that  ${|A| = |B| = \frac{n}{2}}$, and STOP.
		\item If $|A| = \frac{n}{2}$, then add $R'$ to $B$ and STOP.
		\item If $|B| = \frac{n}{2}$, then add $R'$ to $A$ and STOP.
		\item Choose an edge~$(u, v)$ from $R'$.
		\item If $|N(u, v) \cap I_1| \leq c_1$, then assign $A = A \cup \{u, v\}$, $R' = R' \setminus \{u, v\}$, and go to Step 1.
		\item If $|N(u, v) \cap I_2| \leq c_2$, then assign $B = B \cup \{u, v\}$, $R' = R' \setminus \{u, v\}$, and go to Step 1.
	\end{enumerate}
	By \autoref{Neighborhoods}, there exists a balanced bipartition of $G$ with at most $\frac{n^2}{9} - \left(|I_2|-\frac{n}{3}\right)^2$ class-edges, or one of the conditions in Steps 5 and 6  always holds. Since there is a finite number of edges in $R$, the algorithm will eventually stop.

	We now perform case analysis, based on which step the algorithm has exited in.
\setcounter{case}{0}
	\begin{case}
		Algorithm has exited in Step 2.
	\end{case}
	Assume that  $(u, v)$ got assigned to $A$ in Step 5.
	By the precondition of Step 5, we have
	\begin{equation}\label{step 5}
		e(I_1, \{u, v\}) \leq |I_1| + c_1.
	\end{equation}
	We  obtain an identical bound for every pair of vertices that was added to $A$ in Step~5.
	Since the algorithm has exited in Step 2, all vertices in $A$ that were not originally in $I_1$ must have been added in Step 5.
	For each of the $\frac{1}{2} (\frac{n}{2} - |I_1|)$ pairs added in Step 5, \eqref{step 5}~holds.
	Summing all of these bounds together, we obtain
	$$e(I_1, A \setminus I_1) \leq \frac{1}{2} \left(\frac{n}{2} - |I_1|\right)(|I_1| + c_1).$$

	Using this inequality, the triangle-freeness of $G[A \setminus I_1]$ and $G[B \setminus I_2]$ (which follows from the triangle-freeness of $G[R]$),
	and Tur\'{a}n's theorem,
	we  provide an upper bound on the total number of class-edges in the partition $(A, B)$
\begin{align*}
	e(A) + e(B) &= e(I_1, A \setminus I_1) + e(A \setminus I_1) + e(I_2, B \setminus I_2) + e(B \setminus I_2)\\
	 &\leq \frac{1}{2}\left(\frac{n}{2} - |I_1|\right)(|I_1| + c_1) + \frac{1}{4}\left(\frac{n}{2} - |I_1|\right)^2 + |I_2|\left(\frac{n}{2} - |I_2|\right) + \frac{1}{4}\left(\frac{n}{2} - |I_2|\right)^2\\
	 &= \left(|I_2| - \frac{n}{6}\right)\left(\frac{7n}{12} - |I_2|\right) + \frac{1}{4}\left(|I_2| - \frac{n}{6}\right)^2 + |I_2|\left(\frac{n}{2} - |I_2|\right) + \frac{1}{4}\left(\frac{n}{2} - |I_2|\right)^2\\
	 &= \frac{-3|I_2|^2}{2} + \frac{11n|I_2|}{12} - \frac{n^2}{36} \leq \frac{n^2}{9} - \frac{3}{4}\left(|I_2|-\frac{n}{3}\right)^2,
\end{align*}
	where the last inequality is equivalent to
	$$\left(\frac{3|I_2|}{4} - \frac{n}{6}\right)\left(|I_2| - \frac{n}{3}\right) \geq 0,$$
	which  holds since $|I_2| \geq \frac{n}{3}$.

	\begin{case}
		Algorithm has exited in Step 3.
	\end{case}
	In this case, we can analogously obtain the following bound:
\begin{align*}
	&e(A) + e(B) = e(I_1, A \setminus I_1) + e(A \setminus I_1) + e(I_2, B \setminus I_2) + e(B \setminus I_2)\\
	&\leq |I_1|\left(\frac{n}{2} - |I_1|\right) + \frac{1}{4}\left(\frac{n}{2} - |I_1|\right)^2 + \frac{1}{2}\left(\frac{n}{2} - |I_2|\right)(|I_2| + c_2) + \frac{1}{4}\left(\frac{n}{2} - |I_2|\right)^2 \\
	&= \left(\frac{2n}{3} - |I_2|\right)\left(|I_2| - \frac{n}{6}\right) + \frac{1}{4}\left(|I_2| - \frac{n}{6}\right)^2 + \frac{1}{2}\left(\frac{n}{2} - |I_2|\right)\left(|I_2| + \frac{n}{6}\right) + \frac{1}{4}\left(\frac{n}{2} - |I_2|\right)^2\\
	&= -|I_2|^2 + \frac{2n|I_2|}{3} \leq \frac{n^2}{9} - \frac{3}{4}\left(|I_2|-\frac{n}{3}\right)^2,
\end{align*}
where the last inequality is equivalent to
	$$\frac{1}{4}\left(|I_2| - \frac{n}{3}\right)^2 \geq 0.$$
		\begin{case}
		Algorithm has exited in Step 1.
	\end{case}
	Let $B'$ be the set of vertices assigned to $B$ in Step 6 of the algorithm (by $A'$ denote the analogous set for Step 5).
	Since there exists a perfect matching in $B'$, and $G[R]$ is triangle-free, every vertex in $R \setminus B'$ is connected to at most ${|B'|}/{2}$ vertices in $B'$.
	This observation gives us the following inequality
	$$e(B \setminus (I_2 \cup B'), B') \leq \frac{|B'|}{2}\left(\frac{n}{2} - |I_2| - |B'|\right).$$
	As the algorithm exited in Step 1, there is no  edge in $G[B \setminus (I_2 \cup B')]$.
	Together with the  Tur\'{a}n's bound on $e(B')$, this gives us
	$$e(B \setminus I_2) \leq \frac{|B'|^2}{4} + \frac{|B'|}{2}\left(\frac{n}{2} - |I_2| - |B'|\right) = \frac{|B'|}{4}\left(n - 2|I_2| - |B'|\right).$$
	Similarly to the previous cases, we can also prove an upper bound on $e(I_2, B')$: 
	$$e(I_2, B') \leq \frac{|B'|}{2}(|I_2| + c_2).$$
	Finally, we have the following bound on the number of edges between $B \setminus (B' \cup I_2)$ and $I_2$:
	$$e(I_2, B \setminus (B' \cup I_2)) \leq |I_2| \left(\frac{n}{2} - |I_2| - |B'|\right).$$
	Summing all of these bounds together, we obtain
\begin{align*}
e(B) &= e(B \setminus I_2) + e(I_2, B') + e(I_2, B \setminus (B' \cup I_2))\\
	&\leq \frac{|B'|}{4}(n - 2|I_2| - |B'|) + \frac{|B'|}{2}\left(|I_2| + c_2\right) + |I_2|\left(\frac{n}{2} - |I_2| - |B'|\right)\\
	&= -\frac{|B'|^2}{4} - |B'|\left(|I_2| - \frac{n}{3}\right) + \frac{n|I_2|}{2} - |I_2|^2\\
	& =  \left(|I_2| - \frac{n}{3}\right)^2 + \frac{n|I_2|}{2} - |I_2|^2 - \left( \frac{|B'|}{2} -\left(|I_2| - \frac{n}{3}\right) \right)^2,
\end{align*}
	 which implies that
	\begin{equation}\label{max-b}
		e(B) \leq \left(|I_2| - \frac{n}{3}\right)^2 + \frac{n|I_2|}{2} - |I_2|^2.
	\end{equation}
	We  can   obtain an analogous bound on the number of edges in $A$:
	$$e(A) \leq \frac{|A'|}{4}(n - 2|I_1| - |A'|) + \frac{|A'|}{2}(|I_1| + c_1) + |I_1|\left(\frac{n}{2} - |I_1| - |A'|\right).$$
	This time, we substitute $|I_1| = \frac{2n}{3} - |I_2|$ and $c_1 = \frac{n}{2}-|I_2|$ to obtain
\begin{align*}
e(A) &\leq \frac{|A'|}{4}\left(-\frac{n}{3} + 2|I_2| - |A'|\right) + \frac{|A'|}{2}\left(\frac{7n}{6} - 2|I_2|\right) + \left(\frac{2n}{3} - |I_2|\right)\left(-\frac{n}{6} + |I_2| - |A'|\right)\\
	&= -\frac{|A'|^2}{4} + \frac{|A'|}{2}\left(|I_2|-\frac{n}{3} \right) - \frac{n^2}{9} + \frac{5n|I_2|}{6} - |I_2|^2\\
	& \frac{1}{4}\left(|I_2|-\frac{n}{3}\right)^2 - \frac{n^2}{9} + \frac{5n|I_2|}{6} - |I_2|^2 - \left( \frac{|A'|}{2} - \left(|I_2| - \frac{n}{3}\right) \right)^2,
\end{align*}
 implying 
	\begin{equation}\label{max-a}
		e(A) \leq \frac{1}{4}\left(|I_2|-\frac{n}{3}\right)^2 - \frac{n^2}{9} + \frac{5n|I_2|}{6} - |I_2|^2.
	\end{equation}

	Combining \eqref{max-b} and \eqref{max-a}, we bound the number of class-edges in the partition returned by the algorithm:
\begin{align*}
e(A) + e(B) &\leq \frac{5}{4}\left(|I_2| - \frac{n}{3}\right)^2 - \frac{n^2}{9} + \frac{4n|I_2|}{3} - 2|I_2|^2\\
	&= \frac{5}{4}\left(|I_2| - \frac{n}{3}\right)^2 - 2\left(|I_2| - \frac{n}{3}\right)^2 + \frac{n^2}{9} = \frac{n^2}{9} - \frac{3}{4}\left(|I_2| - \frac{n}{3}\right)^2 ,
\end{align*}
	which is exactly what we wanted to prove.
\end{proof}

\begin{definition}
	For a convenient graph $G$, let $T$ be the set of vertices of a maximal set of disjoint triangles in $R$ and $\texttt{cl}(G)$ be the graph defined as follows
	\begin{itemize}
		\item $V(\texttt{cl}(G)) = V(G)$,
		\item $E(\texttt{cl}(G)) = E(G) \setminus \{ (u, v): \{u, v\} \cap T \neq \emptyset\} \cup \{ (u, v): u \in T, v \in I_1 \cup I_2\}$.
	\end{itemize}
\end{definition}

\begin{lemma}\label{pay for triangles}
	For every convenient graph~$G$ and a bipartition $(A, B)$ of $\texttt{cl}(G)$ satisfying $I_1 \subseteq A$, $I_2 \subseteq B$, there exists a balanced bipartition $(A', B')$ of $G$, for which
	$$e_G(A') + e_G(B') \leq e_{\cl(G)}(A) + e_{\cl(G)}(B) + \frac{3}{4}\left(|I_2| - \frac{n}{3}\right)^2.$$
\end{lemma}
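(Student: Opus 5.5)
The plan is to keep the non-triangle part of the partition fixed and to re-insert the triangle vertices of $T$ randomly, one triangle at a time, using that $G$ is $K_4$-free. Write $T$ as the vertex set of the disjoint triangles $\Delta_1,\dots,\Delta_k$, so $t=|T|=3k$. In $\cl(G)$ every vertex of $T$ is adjacent exactly to $I_1\cup I_2$. Hence a vertex of $T\cap A$ contributes exactly $|I_1|$ class-edges and a vertex of $T\cap B$ contributes exactly $|I_2|$. Moreover,
$$e_{\cl(G)}(A)+e_{\cl(G)}(B)=e_G(A\setminus T)+e_G(B\setminus T)+|T\cap A|\,|I_1|+|T\cap B|\,|I_2|.$$
So it suffices to re-insert $T$ into $(A\setminus T,B\setminus T)$ so that the result is balanced and the expected number of $G$-class-edges touching $T$ is at most $|T\cap A||I_1|+|T\cap B||I_2|+\frac34(|I_2|-\frac n3)^2$.

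First, balancing. Let $A_0=A\setminus T$ and $B_0=B\setminus T$; we need to add $\frac n2-|A_0|$ vertices of $T$ to $A_0$ and the rest to $B_0$. Any imbalance of $(A,B)$ can only be corrected through $T$. If $(A_0,B_0)$ itself is unbalanced beyond what $T$ can absorb, I would first move vertices of $R\setminus T$; these have the same neighborhoods in $G$ and $\cl(G)$ within $A_0\cup B_0$. I expect the intended use has $|A|,|B|$ already of size $\frac n2$, and I would check this against how the lemma is applied later. The main case is therefore: distribute $T$ with prescribed numbers $\alpha$ to $A$ and $t-\alpha$ to $B$.

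Second, the random assignment. Each triangle $\Delta_i$ is split $2{:}1$ or $1{:}2$ (or kept whole where the counts require it), with a uniformly random choice of which vertices go where. The counts are chosen so that the total number of $T$-vertices sent to $A$ is $\alpha$. The key input is $K_4$-freeness: every vertex $w\notin\Delta_i$ has at most $2$ neighbours in $\Delta_i$. Under a uniformly random placement with $p$ vertices of $\Delta_i$ in $A$, a vertex $w\in A$ therefore gets at most $2p/3$ expected class-edges to $\Delta_i$, and a vertex $w\in B$ at most $2(3-p)/3$. The same bound applies to pairs of triangles: edges between $\Delta_i$ and $\Delta_j$ number at most $6$, and at most their expected in-class share is counted. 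The internal edges of each triangle add only $O(k)$.

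Summing over $w$ gives, per triangle with $p$ vertices in $A$, roughly $\frac{2p}{3}|A'|+\frac{2(3-p)}{3}|B'|\approx pn/3+(3-p)n/3=n$. The competing $\cl$-cost is $p|I_1|+(3-p)|I_2|$. Since $|I_1|+|I_2|=\frac{2n}3$, averaging over the admissible splits with $p=1$ and $p=2$ gives $\frac32\cdot\frac{2n}3=n$ as well.

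The main obstacle is that the split counts are forced by $\alpha$, so the comparison is not per-triangle equal. It can lose up to about $(|I_2|-|I_1|)$ per vertex shifted away from the "natural" side. I would handle this by choosing $\alpha$-compatible splits to minimise the $G$-cost and then doing the quadratic optimisation. The excess is a quadratic in the number of displaced vertices, with linear coefficient proportional to $|I_2|-\frac n3$. Bounding this quadratic by its maximum should give exactly the error $\frac34(|I_2|-\frac n3)^2$, by the same completing-the-square computation as in \autoref{triangle-free}. If the lower-order terms (triangle interiors and the rounding of $2p/3$) do not fit, I would absorb them using the slack from the $\le 2$ neighbours bound applied to vertices of $I_1\cup I_2$, which lie on fixed sides.
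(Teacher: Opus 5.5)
\paragraph{Verdict.} Your skeleton is the paper's, but there is a genuine gap: the step that actually proves the bound is missing, and the accounting you sketch would not give it. The shared skeleton is to keep $A\setminus T$ and $B\setminus T$ fixed, re-insert $T$ at random with $|T\cap A|$ unchanged, and use that a vertex outside a triangle of $T$ sees at most two of its vertices. The paper also tacitly uses $|A|=|B|=\frac n2$, which holds where the lemma is applied, so no rebalancing through $R\setminus T$ is needed.

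\paragraph{Why your accounting fails.} Put $d=|I_2|-\frac n3$, $T_1=T\cap A$, $T_2=T\cap B$. The $\cl(G)$-cost of $T$ is $|T_1|(\frac n3-d)+|T_2|(\frac n3+d)=\frac n3|T|-d(|T_1|-|T_2|)$. Your ``roughly $n$ per triangle'' estimate uses $|A'|=\frac n2$ for the other endpoints and treats edges inside $T$ as an $O(k)$ error, so it gives a $G$-cost of $\frac n3|T|+O(k)$. The resulting excess $d(|T_1|-|T_2|)+O(k)$ is linear in $|T_1|$. It exceeds $\frac34 d^2$ as soon as, e.g., $T_2=\emptyset$ and $|T_1|>d$. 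The concave quadratic you then propose to maximise is asserted but never derived, and your own estimates do not produce it.

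The ``forced split counts'' obstacle is a red herring. Since $|T\cap A'|=|T_1|$, the $\cl(G)$-cost is fixed, and how the triangles are split does not matter. Nor can $O(k)$ errors be absorbed afterwards. The final bound $\frac34d^2$ is exactly the maximum of the quadratic below, so there is no additive room. The ``at most two neighbours'' bound for vertices of $I_1\cup I_2$ is already fully spent in the factor $\frac23$.

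\paragraph{The missing idea.} Count exactly the terms you approximated.
\begin{itemize}
\item Only $\frac n2-|T_1|$ vertices of $A'$ lie outside $T$. After a uniformly random redistribution of $T$, each of them contributes in expectation at most $\frac23|T_1|$ edges to $T\cap A'$.
\item The edges inside $T\cap A'$ number at most $|T_1|^2/3$, by Tur\'an's theorem in the $K_4$-free graph $G[T]$.
\item In your per-triangle scheme, the same $\frac13(|T_1|^2+|T_2|^2)$ comes out if you sum the pair bounds $\frac23[p_ip_j+(3-p_i)(3-p_j)]$ without double counting and add the edges inside the triangles.
\end{itemize}
Hence, in expectation,
\[
e_G(A')-e_{\cl(G)}(A')\le \tfrac23\bigl(\tfrac n2-|T_1|\bigr)|T_1|+\tfrac13|T_1|^2-|I_1||T_1| = d|T_1|-\tfrac13|T_1|^2\le \tfrac34 d^2,
\]
and likewise $e_G(B')-e_{\cl(G)}(B')\le -d|T_2|-\frac13|T_2|^2\le 0$.

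The saving $-\frac13|T_1|^2$ comes from $|A'\setminus T|=\frac n2-|T_1|$ together with Tur\'an inside $T$. It is precisely what your approximation discards, and it is what bounds the excess by $\frac34\bigl(|I_2|-\frac n3\bigr)^2$.
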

\begin{proof}
	Let $T_1 = A \cap T$ and $T_2 = B \cap T$.
	Define $(A', B')$ to be the balanced  partition obtained by randomly redistributing vertices from $T$ to $A \setminus T_1$ and $B \setminus T_2$.
	In $\cl(G)$ it holds
	$$e_{\cl(G)}(A') + e_{\cl(G)}(B') = e_{\cl(G)}(A) + e_{\cl(G)}(B),$$
	as vertices from $T$ are indistinguishable in $\cl(G)$.

	Notice that any edge that is present in $\cl(G)$ and is not present in $G$ (or vice versa) must have at least one endpoint in $T$.
	For that reason, in order to bound the difference between the number of class-edges in $G$ and in $\cl(G)$ in the partition $(A',B')$, it is enough to count the edges incident to $T$. 

	The $K_4$-freeness of $G$ implies
	$$e_G(B \setminus T, T) \leq \frac{2}{3}|B \setminus T| \cdot  |T|,$$
  as $T$ is a collection of vertex disjoint triangles, so every vertex outside of it must be disconnected from at least one vertex in every one of those triangles.
	Since we redistributed vertices in $T = T_1 \cup T_2$ randomly, we have
	$$e_G(B' \setminus T_2, T_2) \leq \frac{2}{3}|B' \setminus T| \cdot |T| \cdot \frac{|T_2|}{|T|} = \frac{2}{3}|B' \setminus T_2| \cdot |T_2|.$$
	Together with a Tur\'{a}n bound on $e_G(T_2)$, we get
\begin{align*}
e_{G}(B') - e_{\cl(G)}(B') &= e_G(T_2) + e_G(B' \setminus T_2, T_2) - |I_2||T_2| 
	\leq \frac{|T_2|^2}{3} + \frac{2}{3}|B' \setminus T_2||T_2| - |I_2||T_2| \\
	&= \frac{|T_2|^2}{3} + \frac{2}{3}\left(\frac{n}{2} - |T_2|\right)|T_2| - |I_2||T_2| 
	= - \frac{|T_2|^2}{3} - \left(|I_2|-\frac{n}{3}\right)|T_2| \leq 0.
\end{align*}
	For $A'$, we  similarly derive the following bound
	\begin{align*}
	& e_{G}(A') - e_{\cl(G)}(A') \leq - \frac{|T_1|^2}{3} - \left(|I_1| - \frac{n}{3}\right)|T_1| = -\frac{|T_1|^2}{3} + \left(|I_2| - \frac{n}{3}\right)|T_1|\\
	 = & \  \frac{3}{4}\left(|I_2| - \frac{n}{3}\right)^2 -\frac{1}{3}\left( |T_1|+\frac{n}{3}-|T_2|\right)^2 \leq  \  \frac{3}{4}\left(|I_2| - \frac{n}{3}\right)^2.
	\end{align*}
		Altogether, the two inequalities complete the proof of the lemma, as we get
	$$e_G(A') + e_G(B') \leq e_{\cl(G)}(A) + e_{\cl(G)}(B) + \frac{3}{4}\left(|I_2| - \frac{n}{3}\right)^2.$$
\end{proof}

\begin{proof}[Proof of \autoref{2-ind}]
For every convenient graph~$G$, by \autoref{triangle-free}, there exists a ba\-lanced bipartition $(A', B')$ of $\cl(G)$ with at most
$$\frac{n^2}{9} - \frac{3}{4}\left(|I_2| - \frac{n}{3}\right)^2$$ class-edges.
Partition $(A', B')$ can be then transformed by \autoref{pay for triangles} to a partition $(A, B)$ of $G$ with at most
$$\frac{n^2}{9}-\frac{3}{4}\left(|I_2| - \frac{n}{3}\right)^2+\frac{3}{4}\left(|I_2| - \frac{n}{3}\right)^2 = \frac{n^2}{9}$$
class-edges.
\end{proof}

\section{``Almost" complete, balanced tripartite graphs}\label{sec:almost}
We now present a proof for graphs that are ``almost" complete, balanced tripartite.

Our proof of the general case revolves around proving that potential counterexamples to \autoref{main} have to be ``almost" complete, balanced tripartite.
We use observations from this section to conclude that argument.

We begin with a well-known folklore lemma, and we provide a short proof for completeness.
\begin{lemma}\label{triangle-free packing}
	Let $a, b, c$ be fixed integers, such that $c \geq b \geq a$.
	If $G$ is a tripartite, triangle-free graph with a $3$-partition $A, B, C$, with respective sizes $a, b, c$, then $e(G)\le ac + bc$.
\end{lemma}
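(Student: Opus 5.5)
We prove the bound $e(G)\le ac+bc$ by a weighting argument on the edges between the two smaller parts: each such edge forces certain edges to $C$ to be absent, and we compare $e(A,B)$ with the number of these missing edges.

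Write $e(G) = e(A,B) + e(A,C) + e(B,C)$. It suffices to show
\[
e(A,B) \le \bigl(|A||C| - e(A,C)\bigr) + \bigl(|B||C| - e(B,C)\bigr),
\]
that is, $e(A,B)$ is at most the number of non-edges between $C$ and $A\cup B$.

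For each edge $(x,y)$ with $x\in A$ and $y\in B$, triangle-freeness gives $N(x)\cap N(y)\cap C=\emptyset$. Hence
\[
e(x,C)+e(y,C)\le |C|,
\]
so the pair $\{x,y\}$ accounts for at least $|C|$ non-edges to $C$. The difficulty is that an edge $(x,y)$ shares its endpoints with other edges, so these non-edges cannot simply be summed over $E(A,B)$.

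To handle this, we sum the inequality over all edges of $G[A,B]$:
\[
\sum_{x\in A}\deg_B(x)\, e(x,C) + \sum_{y\in B}\deg_A(y)\, e(y,C) \le e(A,B)\,|C|.
\]
This is where the main obstacle lies, because the degrees $\deg_B(x)\le b$ and $\deg_A(y)\le a$ weight the terms unequally. It is easier to work with non-edges directly. Put $\bar e(x,C) = |C| - e(x,C)$. The inequality above becomes
\[
\sum_{(x,y)\in E(A,B)} \bigl(\bar e(x,C)+\bar e(y,C)\bigr) \ge e(A,B)\,|C|.
\]
The left-hand side equals
\[
\sum_{x\in A}\deg_B(x)\,\bar e(x,C) + \sum_{y\in B}\deg_A(y)\,\bar e(y,C) \le b\sum_{x\in A}\bar e(x,C) + a\sum_{y\in B}\bar e(y,C).
\]
Since $a\le b$, this is at most $b\cdot \bar e(C,A\cup B)$. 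Therefore
\[
\bar e(C, A\cup B) \ge \frac{|C|}{b}\, e(A,B) \ge e(A,B),
\]
where the last step uses $c\ge b$. This is exactly the desired inequality.

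The key point is that the ordering $a\le b\le c$ is used twice: once to bound both degree weights by $b$, and once through $c/b\ge 1$. An alternative route, if a gap had appeared, would be a greedy or LP argument, but the double counting above already closes the proof. Finally, observe that equality holds for the graph in which $C$ is complete to $A\cup B$ and $e(A,B)=0$.
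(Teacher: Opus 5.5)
Your proof is correct and is essentially the paper's argument: both use triangle-freeness to find at least $c$ non-edges to $C$ from the endpoints of each edge of $E(A,B)$, note that each such non-edge is counted at most $b$ times because degrees in $G[A\cup B]$ are at most $b$, and conclude via $c/b\ge 1$. The only detail to add is that dividing by $b$ needs $b\ge 1$; when $b=0$ we have $e(A,B)=0$ and the bound is trivial, which the paper sidesteps by assuming $e(A,B)>0$.
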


\begin{proof}
	Assume $e(A, B) > 0$.
  Fix an arbitrary edge~$(u, v) \in E(A, B)$.
	Let $m(u, v)$ be the number of pairs $s, t$, such that $s \in \{u, v\}$, $t \in C$ and $(s, t) \notin E(G)$.
	For every $w \in C$ it  holds that $(u, w) \notin E(G)$ or $(v, w) \notin E(G)$, which implies $m(u, v) \geq c$.

	This gives that $\sum_{(u, v) \in E(A,B)} m(u, v) \geq e(A, B) \cdot c$.
	Since the maximum degree in $G[A \cup B]$ is at most $b$, every  pair $(s, t)$ is counted  at most $b$ times.
	This means that
	$$ e(G) \leq ac + bc - \frac{e(A, B) \cdot c}{b} + e(A, B) \leq ac + bc.$$
\end{proof}

To formalize the intuition of what an ``almost" complete, balanced, tripartite graph is, we introduce the notion of $\varepsilon$-niceness. 
We denote by $T(G)$ the set of triangles in $G$. 

\begin{definition}\label{e-nice}
	For $\varepsilon >0$, a graph $G$ on $n$ vertices with $e$ edges and $t$ triangles is called \emph{$\varepsilon$-nice}, if it satisfies the following inequalities
	\begin{equation}\label{nice-e-old}
		\frac{2e}{n^2} \left(\frac{2}{3} - \frac{2e}{n^2}\right) < \varepsilon,
	\end{equation}

	\begin{equation}\label{nice-t-old}
		\frac{2e}{n^2}\left(\frac{2}{9} - \frac{6t}{n^3}\right) < \varepsilon,
	\end{equation}

	\begin{equation}\label{nice-avg}
		\frac{6\sum_{\{u, v, w\} \in T(G)} n - \deg(u, v) - \deg(u, w) - \deg(v, w)}{n^4} < \varepsilon,
	\end{equation}

	\begin{equation}\label{min-degree}
		\delta (G) \geq \frac{4n-1}{9}.
	\end{equation}
\end{definition}

\begin{lemma}\label{3-ind}
	If $G$ is an $\varepsilon$-nice counterexample to \autoref{main} with $\varepsilon < 10^{-2}$, then $G$ has disjoint independent sets $A_1$, $A_2$ and $A_3$ satisfying
	${|A_1| + |A_2| + |A_3| \geq n(1 - 5\varepsilon)}$
	and $n(\frac{1}{3} - 5 \varepsilon) < |A_i| < n(\frac{1}{3} + 5 \varepsilon)$ for every $i \in \{1, 2, 3\}$.
\end{lemma}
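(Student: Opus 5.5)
The plan is to extract the three independent sets from the common neighbourhoods of a single well-chosen triangle. Then we use \autoref{2-ind}, together with the assumption that $G$ is a counterexample, to force the three sets to be almost balanced.

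\textbf{Step 1: density bounds.} Write $d := \frac{2e}{n^2}$ and $\tau := \frac{6t}{n^3}$. By Tur\'an's theorem $d \le \frac{2}{3}$, and by \eqref{min-degree} we have $d \ge \frac{4n-1}{9n}$, so $d$ is bounded away from $0$. Condition \eqref{nice-e-old} then gives $\frac{2}{3} - d < \varepsilon/d$. Since $\varepsilon<10^{-2}$, this yields $d > 0.6$, say: starting from $d \ge 0.44$ and iterating the inequality once suffices. Plugging this into \eqref{nice-t-old} gives $\tau > \frac{2}{9} - \frac{\varepsilon}{0.6}$, so in particular $t>0$ and triangles exist.

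\textbf{Step 2: the triangle and the sets.} For a triangle $\{u,v,w\}$ put $A_1 := N(v,w)$, $A_2 := N(u,w)$, $A_3 := N(u,v)$.
\begin{itemize}
\item Each $A_i$ is independent. An edge inside, say, $N(v,w)$ would form a $K_4$ together with $v$ and $w$.
\item The sets are pairwise disjoint. A vertex in, e.g., $N(v,w)\cap N(u,w)$ is adjacent to all of $u,v,w$, which again gives a $K_4$.
\end{itemize}
Consequently $n - \deg(u,v)-\deg(u,w)-\deg(v,w) \ge 0$ for every triangle. The numerator of \eqref{nice-avg} is therefore a sum of nonnegative terms, and its average over the $t$ triangles is less than $\varepsilon n^4/(6t) = \varepsilon n/\tau$. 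Choose the triangle minimising this quantity. Then
\[
|A_1|+|A_2|+|A_3| > n\Bigl(1 - \frac{\varepsilon}{\tau}\Bigr) \ge n\Bigl(1 - \frac{\varepsilon}{\frac{2}{9}-\frac{5\varepsilon}{3}}\Bigr) = n\Bigl(1-\frac{4.5\,\varepsilon}{1-7.5\,\varepsilon}\Bigr) \ge n(1-5\varepsilon),
\]
where the last inequality holds because $\varepsilon < 10^{-2} < \frac{1}{75}$.

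\textbf{Step 3: balancing.} Since $G$ is a counterexample, \autoref{2-ind} implies that any two disjoint independent sets have total size below $\frac{2n}{3}$. Hence $|A_i|+|A_j|<\frac{2n}{3}$ for all $i\neq j$. With the lower bound on the total, this gives $|A_k| > n(1-5\varepsilon) - \frac{2n}{3} = n(\frac13-5\varepsilon)$ for every $k$. Feeding this lower bound for $A_j$ back into $|A_i|+|A_j|<\frac{2n}{3}$ gives $|A_i| < n(\frac13+5\varepsilon)$.

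\textbf{Main obstacle.} The only delicate point is the constant in Step 2. The naive estimate $\tau \approx \frac{2}{9}$ gives a factor of about $4.5$, so the margin down to $5$ is thin. It is essential to get a good lower bound on $d$ from \eqref{nice-e-old}, rather than only $d \ge \frac49$ from the minimum degree, before bounding $\tau$ via \eqref{nice-t-old}. With only $d\ge \frac49$ the resulting constant is borderline at $\varepsilon$ close to $10^{-2}$.
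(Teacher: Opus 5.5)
Your proof is correct and follows essentially the same route as the paper. It lower-bounds the triangle density via \eqref{nice-e-old} and \eqref{nice-t-old}, picks a triangle that beats the average in \eqref{nice-avg}, takes the three pairwise common neighbourhoods as the disjoint independent sets, and balances them with \autoref{2-ind}. The only deviation is that you get the initial lower bound on the edge density from the minimum-degree condition \eqref{min-degree}, whereas the paper uses a random-partition argument showing that a counterexample has $e \geq \frac{2n^2}{9}$. Your starting value $d \ge 0.44$ only holds for $n \ge 25$, but $d \ge \frac13$ holds for every $n$ and suffices for the iteration.
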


\begin{proof}
	Notice that we can assume $e \geq \frac{2n^2}{9}$, as otherwise a random partition would have at most $\frac{n^2}{9}$ class-edges in expectation.
	In addition, for $e \in \big[ \frac{2n^2}{9}, \frac{n^2}{4}\big]$, \eqref{nice-e-old} cannot hold.
	Because of that, we can write
	$$\frac{1}{2} \left(\frac{2}{3} - \frac{2e}{n^2}\right) < \varepsilon,$$
	which is equivalent to
	\begin{equation}\label{nice-e}
		n^2 \left(\frac{1}{3} - \varepsilon\right) < e.
	\end{equation}
	Now, combining \eqref{nice-e} with \eqref{nice-t-old} gives
	$$2\left(\frac{1}{3} - \varepsilon\right) \left(\frac{2}{9} - \frac{6t}{n^3}\right) < \varepsilon,$$
	which is equivalent to
	$$n^3 \left( \frac{1}{27} - \frac{\varepsilon}{12(\frac{1}{3} - \varepsilon)}\right) < t,$$
	and for $\varepsilon \leq \frac{1}{12}$ implies that
	\begin{equation}\label{nice-t}
		n^3 \left(\frac{1}{27} - \frac{1}{3} \varepsilon\right) < t.
	\end{equation}

	Using inequalities \eqref{nice-avg} and \eqref{nice-t}, we conclude that there exists a triangle $u, v, w$ in~$G$, for which it holds (provided $\varepsilon < 10^{-2}$)
	$$n - \deg(u, v) - \deg(u, w) - \deg(v, w) < \frac{n^4 \varepsilon}{6} \cdot \frac{1}{n^3(\frac{1}{27} - \frac{1}{3} \varepsilon)}
		< \frac{\varepsilon n}{\frac{2}{9} - 2 \varepsilon} < 5 \varepsilon n.
	$$

	By $K_4$-freeness, the sets $N(u, v)$, $N(u, w)$, and $N(v, w)$ are independent and pairwise disjoint.
	As a result, $G$ contains three disjoint independent sets $A_1, A_2, A_3$, such that $|A_1| + |A_2| + |A_3| > n(1 - 5 \varepsilon)$.
	Moreover, by \autoref{2-ind}, we can conclude
	$$n\left(\frac{1}{3} - 5\varepsilon\right) < |A_i| < n\left(\frac{1}{3} + 5\varepsilon\right)$$
	for every $i \in \{1,2,3\}$.
\end{proof}

\begin{lemma}\label{dense-pairs}
	Let $G$ be an $\varepsilon$-nice counterexample to \autoref{main}.
	Let $A_1$, $A_2$, $A_3$ be three vertex disjoint independent sets in $G$ satisfying $|A_1| + |A_2| + |A_3| \geq n(1 - 5\varepsilon)$ and 
	${n(\frac{1}{3} - 5 \varepsilon) < |A_i| < n(\frac{1}{3} + 5 \varepsilon)}$ for $i \in \{1, 2, 3\}$.
	Then, for every $B_1 \subseteq A_i$, $B_2 \subseteq A_j$, such that $i, j \in \{1, 2, 3\}$, $i \neq j$, ${|B_1| \geq |B_2| \geq \sqrt{3\varepsilon}n}$, it holds $e(B_1, B_2) > 0$.
\end{lemma}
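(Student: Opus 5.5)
The plan is to argue by contradiction through an edge count. The reason is that the proof of \autoref{3-ind} already showed that an $\varepsilon$-nice counterexample has nearly the maximum possible number of edges for a $K_4$-free graph, namely \eqref{nice-e}:
$$e > n^2\left(\tfrac13-\varepsilon\right).$$
That derivation only used $\varepsilon$-niceness together with the fact that $G$ is a counterexample: a random bipartition settles the case $e<\frac{2n^2}{9}$. So I would first recall \eqref{nice-e} as available. Then I suppose that there are $B_1\subseteq A_i$ and $B_2\subseteq A_j$ with $i\neq j$, $|B_1|\ge|B_2|\ge\sqrt{3\varepsilon}\,n$ and $e(B_1,B_2)=0$. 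From this I aim to derive an upper bound on $e$ that is smaller than $n^2(\frac13-\varepsilon)$.

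To bound $e$ from above, let $R:=V(G)\setminus(A_1\cup A_2\cup A_3)$ and $r:=|R|$, so that $r\le 5\varepsilon n$ by hypothesis. Since each $A_\ell$ is independent, every edge of $G$ either meets $R$ or runs between two different sets $A_\ell$. Edges meeting $R$ number at most $rn$. The edges between the $A_\ell$'s number at most
$$|A_1||A_2|+|A_1||A_3|+|A_2||A_3| - |B_1||B_2|,$$
because all pairs between $B_1$ and $B_2$ are non-edges. The first part is at most $\frac{(n-r)^2}{3}$, since $\sum_\ell|A_\ell|=n-r$ and the sum of pairwise products is maximized when the parts are equal. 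The subtracted term is at least $3\varepsilon n^2$.

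Combining these bounds gives
$$e\le \frac{(n-r)^2}{3}+rn-3\varepsilon n^2=\frac{n^2}{3}+\frac{rn}{3}+\frac{r^2}{3}-3\varepsilon n^2 .$$
Using $r\le 5\varepsilon n$, this is at most
$$\frac{n^2}{3}+\left(\frac53+\frac{25\varepsilon}{3}-3\right)\varepsilon n^2 .$$
For small $\varepsilon$ (certainly for $\varepsilon<10^{-2}$, the regime of \autoref{3-ind}) the bracket is less than $-1$. Hence $e<n^2(\frac13-\varepsilon)$, contradicting \eqref{nice-e}.

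The only delicate point is justifying that \eqref{nice-e} holds in the setting of this lemma. The bound needs $e\ge \frac{2n^2}{9}$, which holds because $G$ is a counterexample, and it needs the interval $[\frac{2n^2}{9},\frac{n^2}{4}]$ to be excluded by \eqref{nice-e-old}. Both facts were established at the start of the proof of \autoref{3-ind}. If the lemma is meant for every $\varepsilon$ rather than only small ones, I would add the assumption $\varepsilon<10^{-2}$ where needed, since it is implicit in the hypotheses on the sizes of the $A_i$. The remaining arithmetic is routine.
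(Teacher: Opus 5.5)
Your proof is correct and follows essentially the same route as the paper. Assuming $e(B_1,B_2)=0$, you bound $e(G)$ by $\frac13|A_1\cup A_2\cup A_3|^2-|B_1||B_2|$ plus the edges touching $S=V(G)\setminus(A_1\cup A_2\cup A_3)$, and you contradict $e>n^2(\frac13-\varepsilon)$; the one difference is the bound on edges touching $S$. The paper bounds these by $|S|(n-|S|)+\frac13|S|^2$ using Tur\'an's theorem in $G[S]$, so its final arithmetic needs no extra condition on $\varepsilon$, whereas your cruder bound $rn$ requires $\varepsilon<\frac{1}{25}$. This is harmless, because deriving $e>n^2(\frac13-\varepsilon)$ already needs small $\varepsilon$ and the lemma is only applied with $\varepsilon<10^{-4}$. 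Note, though, that this smallness comes from that derivation, not from the size hypotheses on the $A_i$ as you suggest.
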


\begin{proof}
	Denoting $S=V(G)\setminus(A_1\cup A_2 \cup A_3)$, by Tur\'an's theorem in the $K_4$-free graph $G[S]$ we have $e(S) \leq \frac{1}{3}|S|^2$. Therefore, if $e(B_1, B_2) = 0$, then 
\begin{align*}
e(G) &\leq |A_1| |A_2| + |A_1| |A_3| + |A_2| |A_3| - |B_1| |B_2| + |S| |A_1\cup A_2 \cup A_3| + \frac{1}{3}|S|^2  \\
&\leq \frac{1}{3}|A_1\cup A_2 \cup A_3|^2 - |B_1| |B_2| + |S| |A_1\cup A_2 \cup A_3| + \frac{1}{3}|S|^2  \\
&= \frac{1}{3}n^2 + \frac{1}{3}|S| |A_1\cup A_2 \cup A_3| - |B_1| |B_2| 
\leq \frac{1}{3}n^2 + \frac{5}{3} \varepsilon (1 - 5 \varepsilon)n^2 - 3 \varepsilon n^2 \leq n^2\left(\frac{1}{3}-\varepsilon\right).
\end{align*}
	This contradicts with \eqref{nice-e}.
\end{proof}

\begin{lemma}\label{assign-s}
	Let $G$ be an $\varepsilon$-nice counterexample to \autoref{main} with $\varepsilon < 10^{-4}$.
	Let $S = V(G) \setminus (A_1 \cup A_2 \cup A_3)$, where $A_1$, $A_2$, $A_3$ are vertex disjoint independent sets satisfying $|A_1| + |A_2| + |A_3| \geq n(1 - 5\varepsilon)$
	and $n(\frac{1}{3} - 5 \varepsilon) < |A_i| < n(\frac{1}{3} + 5 \varepsilon)$ for $i \in \{1, 2, 3\}$.
	Then for every $v \in S$ there exists $i \in \{1, 2, 3\}$ such that $e(v, A_i) < 96\varepsilon n$.
\end{lemma}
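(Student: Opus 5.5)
Suppose, for contradiction, that some $v \in S$ satisfies $e(v, A_i) \geq 96\varepsilon n$ for every $i \in \{1,2,3\}$. Let $B_i := N(v) \cap A_i$, so $|B_i| \geq 96\varepsilon n$. The plan is to show that two of these neighbourhood pieces span an edge. An edge between $B_i$ and $B_j$ is not yet a contradiction, since it only forms a triangle with $v$. We therefore want a triangle inside $N(v)$, which together with $v$ would give a $K_4$.

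First step: apply \autoref{dense-pairs} to the pairs $(B_1,B_2)$, $(B_1,B_3)$, $(B_2,B_3)$. Since $96\varepsilon n \geq \sqrt{3\varepsilon}\,n$ fails for small $\varepsilon$ (we have $\sqrt{3\varepsilon} \gg 96\varepsilon$), this does not apply directly. So \autoref{dense-pairs} is only the tool for large sets, and we must combine it with the minimum degree condition \eqref{min-degree}.

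Second step: exploit $\delta(G) \geq \frac{4n-1}{9}$. A vertex $v \in S$ has at most $|S| \leq 5\varepsilon n$ neighbours in $S$, so
$$e(v, A_1) + e(v, A_2) + e(v, A_3) \geq \frac{4n}{9} - 6\varepsilon n.$$
Each $|A_i| < n(\frac13 + 5\varepsilon)$, so at least two of the $B_i$ are large. Order them so that $|B_1| \geq |B_2| \geq |B_3| \geq 96\varepsilon n$. Then
$$|B_1| + |B_2| \geq \frac{4n}{9} - 6\varepsilon n - |B_3|,$$
and for instance $|B_2| \geq \frac{1}{2}\big(\frac{4n}{9} - |B_3|\big) - O(\varepsilon n)$. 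Since $|B_3| \leq |A_3| \approx n/3$, this gives $|B_1|, |B_2| \geq n/18 - O(\varepsilon n)$, which is well above $\sqrt{3\varepsilon}\,n$ for $\varepsilon < 10^{-4}$.

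Third step: find the triangle. $G[N(v)]$ is triangle-free, so for any edge $(x,y)$ with $x \in B_1$, $y \in B_2$, the vertex $x$ has no neighbour in $N(y) \cap B_3$. The cleanest route is to count via \autoref{dense-pairs}. Consider the set $B_3$ of size at least $96\varepsilon n$ and its neighbourhood in $B_1$. Every $z \in B_3$ has, by the minimum degree, many neighbours in $A_1 \cup A_2$. The non-neighbours of $z$ inside $A_1 \cup A_2$ number at most $|A_1|+|A_2|+|S| - \frac{4n}{9} + 1 \approx \frac{2n}{9} + O(\varepsilon n)$. Hence $z$ misses at most about $\frac{2n}{9}$ vertices of $B_1 \cup B_2$.

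Consider $N(z) \cap B_1$ and $N(z) \cap B_2$, both subsets of $N(v)$. These must have no edges between them (else we get a $K_4$ on $v,z,x,y$), so by \autoref{dense-pairs} one of them has size below $\sqrt{3\varepsilon}\,n$. So every $z \in B_3$ is essentially anticomplete to $B_1$ or to $B_2$. Pigeonholing, at least $48\varepsilon n$ vertices of $B_3$ are anticomplete, up to $\sqrt{3\varepsilon}n$ each, to the same $B_j$, say $B_1$. This yields about $48\varepsilon n \cdot |B_1|$ missing edges between $A_3$ and $A_1$.

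Fourth step: combine the missing-edge counts into an edge-count bound as in \autoref{dense-pairs}. We have $e(G) \leq \frac13 n^2 + \frac53\varepsilon n^2 - (\text{missing})$, and the missing edges number at least $48\varepsilon n \cdot (n/18 - O(\sqrt{\varepsilon}n)) \approx 2.67\varepsilon n^2$. Together with the $\frac53\varepsilon$ slack this gives $e(G) \leq n^2(\frac13 - \varepsilon)$, contradicting \eqref{nice-e}.

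The main obstacle is making the constants close. The lower bound on $|B_1|$ must be large enough, and the slack term must be handled carefully. If $n/18$ proves too weak, I would use the sharper bound $|B_1| \geq \frac{1}{2}\big(\frac{4n}{9} - |B_3|\big)$ with the pigeonhole applied to the smaller of the two alternatives, or redo the edge count more precisely, including the contribution of $S$, where the constant $96$ presumably comes from.
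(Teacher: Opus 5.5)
\textbf{There is a genuine gap: your fourth step does not close as written.} After the pigeonhole you keep only $|B_3|/2 \ge 48\varepsilon n$ vertices. Each has at least $|B_j| - \sqrt{3\varepsilon}\,n$ non-neighbours in $B_j$, and $|B_j| > \frac{n}{18} - 5\varepsilon n - \frac1{18}$. So the number of missing edges is
\[
M \;\ge\; 48\varepsilon n\Bigl(\frac{n}{18} - 5\varepsilon n - \sqrt{3\varepsilon}\,n\Bigr) - O(\varepsilon n) \;=\; \frac{8}{3}\varepsilon n^2 - 48\sqrt{3}\,\varepsilon^{3/2} n^2 - O(\varepsilon^2 n^2 + \varepsilon n).
\]
The slack is $\frac13|S|(n-|S|) \le \frac53\varepsilon n^2$, so you only get $e(G) \le n^2\bigl(\frac13 - \varepsilon + 48\sqrt{3}\,\varepsilon^{3/2} + O(\varepsilon^2)\bigr)$. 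The main terms cancel to exactly $-\varepsilon$, and the corrections are net positive. At $\varepsilon = 10^{-4}$ the bound is only about $n^2(\frac13 - 0.15\varepsilon)$. Since \eqref{nice-e} says only $e(G) > n^2(\frac13 - \varepsilon)$, nothing is contradicted. Your closing remark is too vague to count as a fix, and a better lower bound on $|B_1|$ alone does not help when the pigeonhole lands on $B_2$.

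A smaller error is in the second step. The inequality $|B_1| + |B_2| \ge \dots$ together with $|B_1| \ge |B_2|$ bounds $|B_1|$, not $|B_2|$. The correct route is $|B_2| + |B_3| \ge \frac{4n-1}{9} - 5\varepsilon n - |A_1| > \frac n9 - 10\varepsilon n - \frac19$, which does give $|B_2| > \frac{n}{18} - 5\varepsilon n - \frac1{18}$.

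\textbf{How to repair it within your framework.} Simply do not pigeonhole. Every $z \in B_3$ has fewer than $\sqrt{3\varepsilon}\,n$ neighbours in $B_1$ or in $B_2$. Hence it has at least $|B_2| - \sqrt{3\varepsilon}\,n$ non-neighbours in $A_1 \cup A_2$, and these non-edges are distinct for distinct $z$. Thus $M \ge |B_3|(|B_2| - \sqrt{3\varepsilon}\,n)$, whose main term $\frac{16}{3}\varepsilon n^2$ is twice yours. Since $96\sqrt{3\varepsilon} < 1.7$ for $\varepsilon < 10^{-4}$, this gives roughly $e(G) \le n^2(\frac13 - 1.9\varepsilon)$, the desired contradiction.

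\textbf{Comparison with the paper.} The paper avoids the $\sqrt{3\varepsilon}$ loss entirely. $G[B_1 \cup B_2 \cup B_3]$ lies in $N(v)$, so it is a triangle-free tripartite graph. Then \autoref{triangle-free packing} directly gives at least the product of the two smallest $|B_i|$ as non-edges between the $A_i$. The paper takes $A_1$ to minimise $e(v,A_i) = sn$ and uses $\deg(v) \ge \frac{5n}{12}$ to get $e(v,A_2), e(v,A_3) \ge n(\frac1{12} - s - 10\varepsilon)$. It then splits into two cases:
\begin{itemize}
\item If $s < \frac1{12} - s - 10\varepsilon$, the product is at least $sn \cdot n(\frac1{12} - s - 10\varepsilon) \ge 8\varepsilon n^2 - O(\varepsilon^2 n^2)$. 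This beats even its cruder slack of about $5\varepsilon n^2$.
\item Otherwise, the product is at least $s^2 n^2$ with $s \ge \frac1{24} - 5\varepsilon$.
\end{itemize}
Your per-vertex use of \autoref{dense-pairs} amounts to re-deriving a weaker version of that packing bound, which is why your margins came out so thin.
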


\begin{proof}
	Fix $v \in S$, and w.l.o.g.~assume that $i = 1$ is minimizing the value $e(v, A_i)$.
	Assume for a contradiction that $sn = e(v, A_1) \geq 96\varepsilon n$.
	Degree of any vertex in $G$ is at least $\frac{4n-1}{9}$.
  Making a trivial assumption $n \geq 4$, we can also say that every vertex is of degree at least $\frac{15n}{36} \leq \frac{4n - 1}{9}$.
  As a result, both $e(v, A_2)$ and $e(v, A_3)$ must be at least $n(\frac{1}{12} - s - 10 \varepsilon)$.

	Since the graph~$N(v) \cap (A_1 \cup A_2 \cup A_3)$ is tripartite and triangle-free, we use \autoref{triangle-free packing} to conclude that 
  \begin{equation}\label{with-s}
    e(A_1 \cup A_2 \cup A_3) \leq \frac{n^2}{3} - sn^2\left(\frac{1}{12} - s - 10 \varepsilon\right).
  \end{equation}

  We now perform case analysis based on possible values of $s$.
  \setcounter{case}{0}
  \begin{case}
    $s < \frac{1}{12} - s - 10 \varepsilon$.
  \end{case}
  In this case, it is clear that quadratic function on the right-hand side of \eqref{with-s} is maximized as $s$ is minimized.
	Since $s \geq 96\varepsilon$, 
\begin{align*}
	e(G) &\leq \frac{n^2}{3} - sn^2\left(\frac{1}{12} - s - 10 \varepsilon\right) + \frac{(5\varepsilon n)^2}{3} + 5\varepsilon(1 - 5\varepsilon)n^2
	\leq n^2\left(\frac{1}{3} - 3\varepsilon + {10200}\varepsilon^2\right),
\end{align*}	
	which for $\varepsilon < 10^{-4}$ implies that $e(G) \leq n^2\left(\frac{1}{3} - \varepsilon\right)$, and this contradicts \eqref{nice-e}.

  \begin{case}
    $s \geq \frac{1}{12} - s - 10 \varepsilon$.
  \end{case}
  In this case, we want to leverage the fact that $A_1$ was the set minimizing the value $e(v, A_i)$.
  Because of this, both of $e(v, A_2)$ and $e(v, A_3)$ must also exceed $sn$.
  We use this to improve \eqref{with-s} to

  \begin{equation*}
    e(A_1 \cup A_2 \cup A_3) \leq \frac{n^2}{3} - s^2n^2.
  \end{equation*}
  Using $s \geq \frac{1}{24} - 5\varepsilon$, we get
  \begin{align*}
    e(G) &\leq \frac{n^2}{3} - s^2n^2 + \frac{(5\varepsilon n)^2}{3} + 5\varepsilon(1 - 5\varepsilon)n^2
         \leq n^2\left(\frac{1}{3} - \frac{1}{24^2} + \frac{130}{24}\varepsilon - \frac{125}{3}\varepsilon^2\right).
  \end{align*}
  Similarly to Case 1, for $\varepsilon < 10^{-4}$ this implies $e(G) \leq n^2\left(\frac{1}{3} - \varepsilon\right)$, which contradicts \eqref{nice-e}.
\end{proof}

The previous lemmas, for sufficiently small $\varepsilon$, give us a deep insight into the structure of $\varepsilon$-nice counterexamples to \autoref{main}.
We know that $G$ contains three large, disjoint, independent sets, and every vertex that does not belong to them is sparsely connected to at least one of those sets.

The most natural approach is to assign each of those vertices to the set to which they are sparsely connected, and then treat the graph as if it were tripartite.
This idea is indeed correct, and we argue that the balanced bipartition obtained similarly as in \autoref{tripartite} has sufficiently few class-edges.

\begin{definition}
	Let $G$ be an $\varepsilon$-nice counterexample to \autoref{main} with ${\varepsilon < 10^{-4}}$.
	Let $A_1$, $A_2$, $A_3$ be disjoint independent sets in $G$ satisfying $|A_1| + |A_2| + |A_3| \geq n(1 - 5 \varepsilon)$
	and $n(\frac{1}{3} - 5 \varepsilon) < |A_i| < n(\frac{1}{3} + 5 \varepsilon)$ for $i \in \{1, 2, 3\}$, that exist by \autoref{3-ind}.
	For every $i \in \{1, 2, 3\}$, let $A_i'$ be the set containing $A_i$ and all vertices in $V(G) \setminus (A_1 \cup A_2 \cup A_3)$, which has at most $96 \varepsilon n$ edges to $A_i$. By \autoref{assign-s} and $\delta(G) \geq \frac{4n - 1}{9}$, every vertex is in exactly one set $A_i'$.
	We refer to the obtained partition $A_1'$, $A_2'$, $A_3'$ as \emph{spotty partition}.
\end{definition}

The next lemma formalizes the intuition that adding edges inside the sets of a spotty partition is unlikely to make the problem more difficult,
as we need to remove significantly more edges between the sets in order to ensure $K_4$-freeness.

\begin{lemma}\label{missing edges}
	Let $G$ be an $\varepsilon$-nice graph with $\varepsilon < 10^{-4}$ and $A_1'$, $A_2'$, $A_3'$ its spotty partition.
	Let $(u, v)$ be an edge contained in $A_i'$ for some $i \in \{1,2,3\}$. Then for the other parts $A_j'$ and $A_k'$ of the spotty partition it holds 
	$${e(\{u, v\}, A_j' \cup A_k') \leq 2(|A_j'| + |A_k'|) - n\left(\frac{1}{3} - 5 \varepsilon - \sqrt{3\varepsilon}\right)}.$$
\end{lemma}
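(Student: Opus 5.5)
The plan is to exploit the fact that the common neighbourhood of an edge in a $K_4$-free graph is independent, and to combine this with \autoref{dense-pairs}. Throughout, $G$ is taken in the setting in which the spotty partition is defined. That is, $G$ is an $\varepsilon$-nice counterexample to \autoref{main}, and $A_1,A_2,A_3$ are the underlying independent sets with $A_\ell\subseteq A_\ell'$, $|A_1|+|A_2|+|A_3|\ge n(1-5\varepsilon)$ and $|A_\ell|> n(\frac13-5\varepsilon)$. This is exactly the hypothesis of \autoref{dense-pairs}.

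First I will show that the common neighbourhood of $u$ and $v$ is small inside one of $A_j$ or $A_k$. Fix the edge $(u,v)$ inside $A_i'$. Since $G$ is $K_4$-free, $N(u,v)$ is an independent set. Put $B_j:=N(u,v)\cap A_j$ and $B_k:=N(u,v)\cap A_k$; these are disjoint and $e(B_j,B_k)=0$. Suppose both had size at least $\sqrt{3\varepsilon}\,n$. Ordering them so that the larger plays the role of $B_1$, \autoref{dense-pairs} gives $e(B_j,B_k)>0$, a contradiction. Hence, without loss of generality, $|N(u,v)\cap A_k|<\sqrt{3\varepsilon}\,n$.

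Next I count edges from $\{u,v\}$. Every vertex of $A_j'$ contributes at most $2$, so $e(\{u,v\},A_j')\le 2|A_j'|$. A vertex $w\in A_k'$ contributes $2$ only if $w\in N(u,v)$, and at most $1$ otherwise. Therefore
$$e(\{u,v\},A_k')\le |A_k'|+|N(u,v)\cap A_k'|\le |A_k'|+|A_k'\setminus A_k|+\sqrt{3\varepsilon}\,n = 2|A_k'|-|A_k|+\sqrt{3\varepsilon}\,n.$$
Using $|A_k|>n(\frac13-5\varepsilon)$ then yields $e(\{u,v\},A_j'\cup A_k')\le 2(|A_j'|+|A_k'|)-n(\frac13-5\varepsilon-\sqrt{3\varepsilon})$, which is the claimed bound.

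The only delicate point is the first step. One must check that \autoref{dense-pairs} applies, which needs the counterexample and niceness hypotheses carried along with the spotty partition. One must also make sure to intersect with the original independent sets $A_j,A_k$ rather than with $A_j',A_k'$, since only the former are covered by that lemma. The extra vertices of $A_k'\setminus A_k$ are then absorbed by the trivial bound above, so I expect no further obstacle.
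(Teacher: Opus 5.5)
Your proof is correct and is essentially the paper's argument: $K_4$-freeness makes $N(u,v)$ independent, \autoref{dense-pairs} then forces one side of it to have fewer than $\sqrt{3\varepsilon}n$ vertices, and every remaining vertex of that part receives at most one edge from $\{u,v\}$. Your choice to intersect with the original independent sets $A_j, A_k$, and to absorb $A_k'\setminus A_k$ through the bound $2|A_k'|-|A_k|+\sqrt{3\varepsilon}n$, is slightly more careful than the paper: the paper applies \autoref{dense-pairs} directly to $N(u,v)\cap A_j'$ and $N(u,v)\cap A_k'$, which are sets that lemma does not literally cover.
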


\begin{proof}
	Notice that $e(N(u, v) \cap A_j', N(u, v) \cap A_k') = 0$, as otherwise there would be a $K_4$ in~$G$.
	Together with \autoref{dense-pairs}, we can infer that one of $N(u, v) \cap A_j'$ and $N(u, v) \cap A_k'$ is smaller than $\sqrt{3\varepsilon}n$
	(w.l.o.g.~assume it is the first one).
	Then,
	$$e(\{u, v\}, A_j') \leq 2|A_j'| - (|A_j'| - \sqrt{3 \varepsilon}n) \leq 2|A_j'| - n \left(\frac{1}{3} - 5 \varepsilon - \sqrt{3\varepsilon}\right),$$
	which also means
	$$e(\{u, v\}, A_j' \cup A_k') \leq 2(|A_j'| + |A_k'|) - n\left(\frac{1}{3} - 5 \varepsilon - \sqrt{3 \varepsilon}\right),$$
	as required.
\end{proof}

\begin{theorem}\label{nice}
	\autoref{main} holds for every $\varepsilon$-nice graph $G$ with $\varepsilon < 10^{-4}$.
\end{theorem}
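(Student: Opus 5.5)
We argue by contradiction: assume $G$ is an $\varepsilon$-nice counterexample to \autoref{main} with $\varepsilon<10^{-4}$. By \autoref{parity} we may take $6\mid n$. By \autoref{3-ind} and \autoref{assign-s}, $G$ has a spotty partition $A_1',A_2',A_3'$. Write $a_i=|A_i'|/n$. Each $A_i'$ contains the large independent set $A_i$ and differs from it by at most $5\varepsilon n$ vertices, so $a_i=\frac13\pm O(\varepsilon)$. The plan is to copy the random procedure from the proof of \autoref{tripartite}:
\begin{itemize}
\item pick $i\in\{1,2,3\}$ uniformly at random;
\item move a uniformly random subset of $A_i'$ into the two other parts, so that the parts become $A_j'\cup X_j$ and $A_k'\cup X_k$ with $|A_j'\cup X_j|=|A_k'\cup X_k|=n/2$.
\end{itemize}
We then compute the expected number of class-edges. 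Since the parts $A_i'$ are no longer independent, this expectation is no longer an exact formula. It splits into a cross term and an internal term, which we treat separately.

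For the cross term, the class-edges counted are those between $X_j$ and $A_j'$ and between $X_k$ and $A_k'$. Each vertex of $A_i'$ lands in $X_j$ with probability $(\frac12-a_j)/a_i$. Summing over $i$ therefore gives exactly
\[
\frac13\sum_i\sum_{j\neq i}\frac{\frac12-a_j}{a_i}\,e(A_i',A_j').
\]
Let $m_{ij}=a_ia_jn^2-e(A_i',A_j')$ be the number of missing cross edges. The cross contribution then equals the tripartite expression from \autoref{tripartite}, namely $\frac{n^2}{9}-\frac23\sum(a_i-\frac13)^2n^2\le \frac{n^2}{9}$, minus $\sum_{i\ne j} w_{ij}m_{ij}$. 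The weights $w_{ij}=\frac13(\frac12-a_j)/a_i$ are all $\frac16\pm O(\varepsilon)$, hence positive.

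For the internal term, every edge inside $A_i'$ is a class-edge unless its endpoints are split by the random process. An edge inside $A_j'$ that stays inside is always counted. So the internal contribution is at most $\sum_i e(A_i')$, with a coefficient of the form $1-O(1)$ probability of splitting; simply bounding the coefficient by $1$ is fine. It remains to show
\[
\sum_i e(A_i') \le \Bigl(\tfrac16-O(\varepsilon)\Bigr)\sum_{i\neq j} m_{ij}.
\]
Equivalently, each internal edge must be paid for by roughly $6$ or more missing cross edges, in a way that avoids double counting.

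This comparison is the main obstacle. The tool is \autoref{missing edges}: an internal edge $uv\in A_i'$ sends at most $2(|A_j'|+|A_k'|)-n(\frac13-O(\sqrt\varepsilon))$ edges to the other parts. Thus the pair $\{u,v\}$ misses about $n/3$ cross edges, compared with the complete tripartite count $2(|A_j'|+|A_k'|)$. Summing over internal edges charges missing edges at endpoints $u$ or $v$, and a vertex $u$ with internal degree $d_u$ is charged up to $d_u$ times. The charging should therefore be done per vertex. For $u\in A_i'$ let $d_u$ be its degree inside $A_i'$ and $\mu_u$ its number of missing edges to the other two parts; we want $\sum_u d_u$ to be controlled by $\sum_u \mu_u$.

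Two sources of control give this.
\begin{itemize}
\item Internal vertices are few. Every vertex of $A_i$ has internal degree $0$, so only vertices of $A_i'\setminus A_i\subseteq S$ with $|S|\le 5\varepsilon n$ contribute, and $d_u\le |S|$ for $u\in A_i$'s neighbours of this kind, or more precisely $d_u$ is bounded by the $S$-vertices involved.
\item Vertices of $S$ are sparse to their own part. By the spotty definition a vertex $s\in S\cap A_i'$ has at most $96\varepsilon n$ neighbours in $A_i$, and $s$ must miss many cross edges as soon as it has internal edges.
\end{itemize}
Concretely, every internal edge has an endpoint $s\in S$. Applying \autoref{missing edges} to an edge $sv$ shows that $s$ and $v$ together miss about $n/3$ edges. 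Summing over the at most $|S|$-many internal edges at $v$ gives $\sum_u \mu_u\gtrsim \frac{n}{3}\cdot\frac{e_{\mathrm{int}}}{\max\text{ internal degree}}$. The maximum internal degree is at most $96\varepsilon n+5\varepsilon n$, so this already gives a ratio of order $1/\varepsilon\gg 6$.

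The plan is to make this rigorous by counting, for each $s\in S$, its missing cross edges together with those of its internal neighbours in $A_i$ via $K_4$-freeness as in \autoref{missing edges}. Combining the bound with the cross-term identity yields an expected count $\le n^2/9$, so some balanced bipartition has at most $n^2/9$ class-edges, contradicting that $G$ is a counterexample. Should the random split coefficients cause trouble near $a_i>\frac12$, the case is excluded because $a_i=\frac13+O(\varepsilon)$.
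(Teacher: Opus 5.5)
Your proposal is correct and follows the paper's proof. It uses the same random extension of two spotty parts from the third, the same cross-term identity with missing-edge weights $\approx\frac16$ per ordered pair, and the same use of \autoref{missing edges} together with the internal degree bound $d_u\le 96\varepsilon n+5\varepsilon n$. The one difference is the anti-double-counting step: the paper passes to a maximal matching $M$ of internal edges, whereas your per-vertex charging becomes rigorous as $e_{\mathrm{int}}\,n\bigl(\frac13-5\varepsilon-\sqrt{3\varepsilon}\bigr)\le\sum_{uv}(\mu_u+\mu_v)=\sum_u d_u\mu_u\le 101\varepsilon n\sum_{i\neq j}m_{ij}$, which even gains a factor $2$. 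Your side claim that vertices of $A_i$ have internal degree $0$ is false, since they can have neighbours in $A_i'\setminus A_i$, but it is not needed: only $d_u\le 101\varepsilon n$ is used.
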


\begin{proof}
	Let $A_1', A_2', A_3'$ be a  spotty partition of $G$.
	Let $M$ be a  maximal matching, whose edges are  inside $A_1'$, $  A_2'$ and $ A_3'$ (not necessarily all in the same class). Note that by the maximality of $M$, every edge inside of one of the classes $A_1'$, $ A_2'$  and $ A_3'$ either belongs to $M$ or is incident with an edge in $M$.
	Denote 
	$$m(A_i', A_j') := |A_i'||A_j'| - e(A_i', A_j').$$

	Define a partition,  which we obtain by randomly distributing vertices from $A_1'$ to $A_2'$ and to $A_3'$, until they reach size $\frac{n}{2}$.
	Notice that this approach is sound, as each set of the spotty partition is of size at most $n (\frac{1}{3} + 10 \varepsilon)$, which is  less than $\frac{n}{2}$.
	We obtain a balanced partition, in which the expected number of class-edges is at most
	
\begin{align*}
&e(A_1') + e(A_2') + e(A_3') + \left(\frac{\frac{n}{2} - |A_2'|}{|A_1'|}\right) e(A_1', A_2') + \left(\frac{\frac{n}{2} - |A_3'|}{|A_1'|}\right) e(A_1', A_3')\\
	&= e(A_1') + e(A_2') + e(A_3') + \left(\frac{\frac{n}{2} - |A_2'|}{|A_1'|}\right)(|A_1'||A_2'| - m(A_1', A_2')) \\
	 &\qquad + \left(\frac{\frac{n}{2} - |A_3'|}{|A_1'|}\right)(|A_1'||A_3'| - m(A_1', A_3'))\\
	&\leq e(A_1') + e(A_2') + e(A_3') + \left(\frac{n}{2} - |A_2'|\right)|A_2'| + \left(\frac{n}{2} - |A_3'|\right)|A_3'| \\
	 &\qquad - \left(\frac{\frac{1}{6} - 10 \varepsilon}{\frac{1}{3} + 10 \varepsilon}\right) (m(A_1', A_2') + m(A_1', A_3')) \\
	 &\leq e(A_1') + e(A_2') + e(A_3') + \left(\frac{n}{2} - |A_2'|\right)|A_2'| + \left(\frac{n}{2} - |A_3'|\right)|A_3'| \\
	 &\qquad - \left(\frac{1}{2}-45\varepsilon\right) (m(A_1', A_2') + m(A_1', A_3')).
\end{align*}

	Now, the expected number of class-edges when the initial set is picked randomly (instead of $A_1'$) is at most
\begin{align*}
	&e(A_1') + e(A_2') + e(A_3') + \frac{2}{3}\left(\frac{n}{2} - |A_1'|\right)|A_1'| + \frac{2}{3}\left(\frac{n}{2} - |A_2'|\right)|A_2'| + \frac{2}{3}\left(\frac{n}{2} - |A_3'|\right)|A_3'| \\
	&\qquad - \frac{2}{3}\left(\frac{1}{2}-45\varepsilon\right)(m(A_1', A_2') + m(A_1', A_3') + m(A_2', A_3')) \\
	&\leq \frac{n^2}{9} + e(A_1') + e(A_2') + e(A_3') - \left(\frac{1}{3} - 30\varepsilon\right) (m(A_1', A_2') + m(A_1', A_3') + m(A_2', A_3')).
\end{align*}
	We can now see that this partition is sparse enough, as long as
	\begin{equation}\label{too much missing}
		e(A_1') + e(A_2') + e(A_3') \leq \left(\frac{1}{3} - 30\varepsilon\right) (m(A_1', A_2') + m(A_1', A_3') + m(A_2', A_3')).
	\end{equation}

	Notice that by \autoref{assign-s} the maximum degree in $G[A_i']$ is bounded by $96 \varepsilon n + 5 \varepsilon n$, which means
	$$e(A_1') + e(A_2') + e(A_3') \leq 202 \varepsilon |M| n.$$
	We now apply \autoref{missing edges} to every edge in $M$.
	Since $M$ is a matching, every missing edge between a pair of $A_1', A_2'$ and $A_3'$ is counted at most twice.
	This gives a lower bound on the total number of missing edges
	$$\frac{1}{2} |M| n\left(\frac{1}{3} - 5 \varepsilon - \sqrt{\varepsilon}\right) \leq m(A_1', A_2') + m(A_1', A_3') + m(A_2', A_3').$$
	We can now see that \eqref{too much missing} is true, whenever
	$$202 \varepsilon \leq \frac{1}{2}\left(\frac{1}{3} - 5 \varepsilon - \sqrt{\varepsilon}\right)\left(\frac{1}{3} - 30\varepsilon\right),$$
	which holds for $\varepsilon < 10^{-4}$.
\end{proof}

\section{Flag algebras}\label{sec:flags}
To complete the proof for the general case, we employ the powerful framework of flag algebras developed by Razborov in \cite{flag-algebras}.

Flag algebras are used to obtain bounds on asymptotic densities of induced substructures in combinatorial objects.
As of now, the most frequent applications were in graphs (\cite{Grz1, graph4, graph2, graph3}), but flags also proved to be useful in other use cases, such as oriented graphs (\cite{orgraph2, raz2}), hypergraphs (\cite{hyper1, hyper2}), hypercubes (\cite{cube1}) or permutations (\cite{permutations}). The method was also successfully applied to problems that are not expressed in terms of subgraph densities, but can be translated to such a setting by introducing colors on vertices or edges in a particular way (\cite{coloredflags, ramsey1}).

The process of obtaining bounds using the flag algebra method can be partially automated with the help of semidefinite programming.
A proper introduction to this framework is beyond the scope of this paper.
For this purpose, we refer the interested readers to the original paper by Razborov \cite{flag-algebras} or surveys \cite{flagsurvey3, flagsurvey1, flagsurvey2}.

For the rest of this section we focus on defining the notation that we use for flags, which is very similar to the one used in \cite{10problems}.

We only operate on the flags from the theory of simple, undirected, $2$-vertex-colored $K_4$-free graphs.
We color the vertices with blue and red.

We denote the flags from the algebra of the unlabeled flags by directly depicting the graph using round vertices.
For instance, this is the flag for the unlabeled triangle with two blue vertices and a single red vertex:
$$\Triangle{blue}{blue}{red}.$$

To denote a partially labeled flag $(G, \theta)$ from algebra $\mathcal{A}^\theta$, we depict $G$ and signify the injective mapping $\theta$ by writing the labels inside the vertices.
The vertices in $\operatorname{im}(\theta)$ are also square, rather than round.
For example, this is a labeled polychromatic edge with a single unlabeled red vertex connected to the blue endpoint of the edge
$$\EdgeL{blue}{red}{red}.$$

We also use the white color as a wildcard for any color.
More formally, a white vertex is syntactic sugar for the sum of the flags which are isomorphic to the template up to the color of this vertex.
When only one vertex in the template is white, this is equivalent to a sum of two flags, where we replace the white vertex with a blue one and a red one.
For example
$$\Triangle{blue}{blue}{white} = \Triangle{blue}{blue}{red} + \Triangle{blue}{blue}{blue}.$$
When the number of vertices increases, the sum gets longer. For instance
$$\Triangle{white}{white}{white} = \Triangle{red}{red}{red} + \Triangle{blue}{red}{red} + \Triangle{blue}{blue}{red} + \Triangle{blue}{blue}{blue}.$$

\section{General case}\label{sec:general}
In this section we leverage prior observations and the flag algebraic framework to conclude the proof in the general case.

\begin{definition}
  For a $K_4$-free graph $G$, we define its \emph{degree coloring} by coloring each vertex $v \in V(G)$ red if $\deg(v) \geq {|V(G)|}/{2}$, and blue if $\deg(v) < {|V(G)|}/{2}$.
\end{definition}

\begin{theorem}\label{best-basic}
  If $G$ is a $K_4$-free graph on $n$ vertices, then ${\texttt{bb}(G) \leq \frac{n^2}{9}}$.
\end{theorem}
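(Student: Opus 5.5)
We argue by contradiction, combining the reductions of \autoref{sec:simple} with \autoref{nice}. Suppose $G$ is a counterexample. By \autoref{parity} we may assume $6 \mid n$. By \autoref{min-degree-obs} we may assume $\delta(G)\ge \frac{4n-1}{9}$, which is \eqref{min-degree}. Passing to a large balanced blowup of $G$ (iterating \autoref{2-blowup} and \autoref{3-blowup}) preserves $K_4$-freeness and the counterexample property, and sends all relevant densities to their limits. So it suffices to show that every counterexample $G$ satisfies \eqref{nice-e-old}, \eqref{nice-t-old} and \eqref{nice-avg} for some fixed $\varepsilon<10^{-4}$. Then $G$ is $\varepsilon$-nice, and \autoref{nice} gives the contradiction.

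To prove these three density inequalities we use flag algebras on the $2$-vertex-colored theory, with $G$ given its degree coloring. The key task is to encode the counterexample assumption $\texttt{bb}(G)>n^2/9$ as linear inequalities between flag densities. We produce explicit random balanced bipartitions whose expected number of class-edges is a flag expression, and then require each such expectation to exceed $\frac19$ in normalized units. The degree coloring makes these bipartitions expressible.

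\begin{itemize}
\item Red vertices have degree at least $n/2$. For a labeled vertex or labeled edge, take $N(v)$, or a common neighbourhood, truncated or extended to size $n/2$. For example, for a triangle $uvw$ use the sets $N(u,v)$, $N(u,w)$, $N(v,w)$, which are independent by $K_4$-freeness, padded randomly.
\item Pad a neighbourhood with vertices of a chosen color, so that the number of added vertices is controlled by flags of the type $\VertexM{white}{red}$ and $\VertexN{white}{red}$.
\item Use the random-partition bound $e(G)>2n^2/9$.
\item Use the Sudakov and Reiher type consequences, together with \autoref{2-ind}: no two disjoint independent sets have total size at least $2n/3$. In particular, every labeled edge has $\deg(u,v)<2n/3-\ldots$, which gives the constraints on pairs of common neighbourhoods.
\end{itemize}

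Each of these yields an inequality "expected class-edges $>1/9$", rooted at a vertex or an edge and averaged over the root. We also impose the $\delta\ge 4n/9$ constraint as a rooted positivity condition on $\VertexM{white}{white}-\frac49$.

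We then run an SDP and find a certificate showing that, subject to all these constraints, the left-hand sides of \eqref{nice-e-old}, \eqref{nice-t-old} and \eqref{nice-avg} must be below $10^{-4}$. Note that the quantity in \eqref{nice-avg} is an edge-rooted triangle density expression, so it fits the framework. The extremal configuration in which the bound is tight is the balanced complete tripartite graph, where all three quantities vanish. The certificate must therefore be stable rather than merely tight. Concretely, this means bounding a positive combination of the error terms, such as the $\frac{2e}{n^2}(\frac23-\frac{2e}{n^2})$ terms, by $0$. These are precisely the product forms the definition was designed to have, so that they become flag polynomials.

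The main obstacle is the middle step: choosing enough random balanced partitions, expressible through the colored flags, that the SDP actually closes with a rigorous rounded certificate. The natural neighbourhood-based partitions have size that depends on the root's degree. The red/blue coloring only partly fixes this, and partitions that require exact cutting at $n/2$ produce non-polynomial terms. I would first try linearizing these by random padding and by restricting to red roots, and I would add more colors (for example, thresholds at $4n/9$ and $n/2$) if the bound does not close.
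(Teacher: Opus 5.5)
Your skeleton matches the paper's. You take a minimal counterexample with $\delta(G)\ge\frac{4n-1}{9}$ via \autoref{min-degree-obs}, give it the red/blue degree coloring, and pass to the blowup limit. You then turn random balanced partitions into rooted flag inequalities, use an SDP to force \eqref{nice-e-old}--\eqref{nice-avg} with tiny $\varepsilon$, and contradict \autoref{nice}.

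\textbf{The gap.} In a computer-assisted argument of this kind, the concrete list of inequalities together with a verified certificate \emph{is} the proof, and you supply neither. The sentence ``we then run an SDP and find a certificate'' is asserted for a menu you have not fixed, and you concede you do not know whether it closes. Much of that menu cannot do the work:
\begin{itemize}
\item The constraints $e(G)>\frac{2n^2}{9}$ and $\delta\ge\frac{4n}{9}$ are both satisfied by $K_{n/2,n/2}$, which is far from nice.
\item \autoref{2-ind} only constrains pairs of disjoint independent sets, such as $N(u,v),N(u,w)$ inside a triangle. For a single edge, Tur\'an already gives $\deg(u,v)<\frac n2$.
\item Your alternative cuts (truncating $N(v)$ for red $v$, padding $N(u,v),N(u,w),N(v,w)$ on a triangle) are legitimate, but nothing shows they suffice.
\item Aiming to bound the error terms ``by $0$'' would need an exact certificate tight at the balanced tripartite graphon. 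The paper proves only the non-tight \eqref{must-be-nice} with $10^{-9}$, which is all \autoref{nice} needs.
\end{itemize}

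\textbf{The missing idea: cuts that are well defined and of low degree.} The paper uses exactly four:
\begin{itemize}
\item For a blue root $v$, extend $N(v)$; this is possible since $|N(v)|<\frac n2$.
\item For a red--red edge $uv$, extend $N(u)\setminus N(v)$ and $N(u,v)$ using the remaining vertices.
\item For the same edge, separately extend $(N(u)\setminus N(v))\cup(V(G)\setminus(N(u)\cup N(v)))$ and $N(v)\setminus N(u)$ using $C=N(u,v)$.
\item For a red cherry $uvw$, extend $(N(u)\cup N(w))\setminus N(v)$ and $(N(v)\setminus N(u))\cup(V(G)\setminus(N(u)\cup N(v)\cup N(w)))$ using $C=N(u,v)$.
\end{itemize}
Any subset of $V(G)\setminus N(x)$ with $x$ red has at most $\frac n2$ vertices, so each side fits. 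Choosing the filler $C$ to be the independent set $N(u,v)$ removes the $e(C)$ term. This lowers the power of the denominator and the number of vertices in the resulting flags, which the paper notes is what makes the SDP tractable.

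Your worry about non-polynomial terms is resolved by multiplying each rootwise inequality by its positive denominator, $|C|$ or $|C|^2$, before unlabeling, as in \eqref{vertex-cut}--\eqref{cherry-cut}. No extra colors are needed. Until you fix such a family and actually check a certificate, your final step is a hope rather than a proof.
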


\begin{proof}
  Let $G$ be the smallest (with respect to the number of vertices) counterexample with the degree coloring.
  By \autoref{min-degree-obs} we can assume that $G$ has no vertices of degree less than $({4n - 1})/{9}$.

  Let $W$ be the $2$-vertex-colored graphon obtained by considering the infinite sequence of increasing blowups of $G$.
  For the rest of the proof, we define flag inequalities that $W$ would have to satisfy and use semidefinite programming to prove that $W$ cannot exist.

  To obtain the first inequality, assume that there is at least one blue vertex in $G$, say $v$. By the definition of the coloring, the degree of $v$ is less than ${n}/{2}$.
  Consider a balanced bipartition, where one part is a random extension of $N(v)$ to size ${n}/{2}$ and the second one is its complement.
  Since $G$ is a counterexample to \autoref{main}, we know that there are at least ${n^2}/{9}$ class-edges inside this bipartition.
  This means that if we choose $v$ at random, then the expected number of edges also cannot be below ${n^2}/{9}$.
  We can write this as the following inequality for $G$
  \begin{equation}\label{cut-on-vertex}
    \frac{n^2}{9} \leq e(N(v)) +  e(N(v), V(G) \setminus N(v)) \cdot \frac{\frac{n}{2} - |N(v)|}{n - |N(v)|} 
    + e(V(G) \setminus N(v)) \cdot \left(\frac{\frac{n}{2}}{n - |N(v)|}\right)^2.
  \end{equation}

  Notice that this inequality heavily relies on the assumption that $v$ is blue. 
  It is clear that such inequality must hold for any $k$-blowup of $G$, which we can use to define an analogous relationship for $W$.
  To do this, we rely on the following correspondence between limits of densities in $k$-blowups of $G$ and flags of type $\RootedVertex{blue}$:

  \begin{itemize}
    \item $\frac{|N(v)|}{n} \to \VertexM{blue}{white}$,
    \item $\frac{2e(N(v))}{n^2} \to \VertexMM{blue}{white}{white}$,
    \item $\frac{|V(G) \setminus N(v)|}{n} \to \VertexN{blue}{white}$,
    \item $\frac{2e(V(G) \setminus N(v))}{n^2} \to \VertexNN{blue}{white}{white}$,
    \item $\frac{2e(N(v), V(G) \setminus N(v))}{n^2} \to \VertexNM{blue}{white}{white}$,
  \end{itemize}
  which gives that $W$ must satisfy
  \begin{equation*}
    \begin{aligned}
      \frac{2}{9} \leq & \VertexMM{blue}{white}{white} + \frac{\frac{1}{2} - \VertexM{blue}{white}}{\VertexN{blue}{white}} 
      \cdot \VertexNM{blue}{white}{white} + \VertexNN{blue}{white}{white} \cdot \left(\frac{\frac{1}{2}}{\VertexN{blue}{white}}\right)^2.
    \end{aligned}
  \end{equation*}
  This can be rewritten as
  \begin{equation*}
    \begin{aligned}
      0 \leq & \left(\VertexMM{blue}{white}{white} - \frac{2}{9}\right) \cdot \VertexN{blue}{white}^2 + \left(\frac{1}{2} - \VertexM{blue}{white}\right) \cdot \VertexN{blue}{white}
      \cdot \VertexNM{blue}{white}{white} + \frac{1}{4}\VertexNN{blue}{white}{white},
    \end{aligned}
  \end{equation*}
  and then averaged
  \begin{equation}\label{vertex-cut}
    \begin{aligned}
      0 \leq \unlab{
        \left(\VertexMM{blue}{white}{white} - \frac{2}{9}\right) \cdot \VertexN{blue}{white}^2 + \left(\frac{1}{2} - \VertexM{blue}{white}\right) \cdot \VertexN{blue}{white}
        \cdot \VertexNM{blue}{white}{white} + \frac{1}{4}\VertexNN{blue}{white}{white}
      }.
    \end{aligned}
  \end{equation}

  It is easy to see that if \eqref{vertex-cut} did not hold for $W$, then there would exist some $k$ (possibly very large), such that \eqref{cut-on-vertex} would not hold for the $k$-blowup of $G$.
  Notice that \eqref{vertex-cut} is still well-defined and holds even when there are no blue vertices in $G$ (e.g., in a complete balanced tripartite graph).

  We now define similar partitions, but based on more non-trivial roots (edge and cherry).
  Assume that there exists at least one edge in $G$ whose both endpoints are red, and fix one of them $(u, v)$.
  We now partition $V(G)$ into following parts based on $(u, v)$:
  \begin{equation*}
    L := N(u) \setminus N(v), \quad \quad  \quad \quad 
    R := N(u,v),  \quad \quad  \quad \quad 
    C := V(G) \setminus (L \cup R).
  \end{equation*}
 
  The next balanced bipartition is a random extension of $L$ and $R$ to size $\frac{n}{2}$ with vertices from $C$.
  Since  $L \subseteq V(G) \setminus N(v)$, the maximum size of $L$ is bounded by $\frac{n}{2}$, because $\deg(v) \geq {n}/{2}$.
  In addition, the size of $R$ is upper-bounded by ${n}/{2}$, because otherwise we would have an independent set of size ${n}/{2}$, which would give a balanced bipartition with at most ${n^2}/12$ class-edges by applying Tur\'an's theorem to its complement.
  Therefore, this bipartition is well-defined.
  A lower bound on the expected number of class-edges inside this partition gives the following inequality
  $$\frac{n^2}{9} \leq e(L) + e(R) + \frac{\frac{n}{2} - |L|}{|C|} e(L, C) + \frac{\frac{n}{2} - |R|}{|C|} e(R, C) +
  \left[\left(\frac{\frac{n}{2} - |L|}{|C|}\right)^2 + \left(\frac{\frac{n}{2} - |R|}{|C|}\right)^2\right]\cdot e(C).$$

  \begin{figure}
    \centering
    \includegraphics[width=0.6\textwidth]{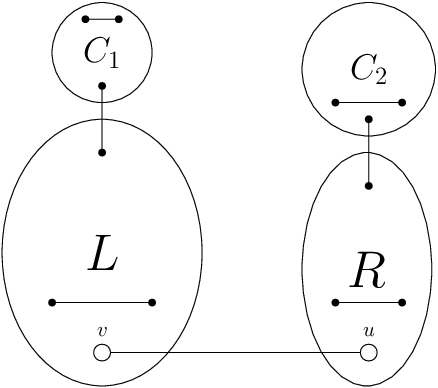}
    \caption{Different kinds of class-edges in the random bipartition based on the edge~$(u, v)$.}
  \end{figure}

  \noindent This time, limits of densities relevant to this partition are in the flag algebra of type $\RootedEdge{red}{red}$:
  \begin{itemize}
    \item $\frac{|L|}{n} \to \EdgeL{red}{red}{white}$,
    \item $\frac{2e(L)}{n^2} \to \EdgeLaL{red}{red}{white}{white}$,
    \item $\frac{|R|}{n} \to \EdgeB{red}{red}{white}$,
    \item $\frac{2e(R)}{n^2} = 0$,
    \item $\frac{|C|}{n}\to \EdgeR{red}{red}{white} + \EdgeN{red}{red}{white}$,
    \item $\frac{2e(L, C)}{n^2} \to \EdgeLaR{red}{red}{white}{white} + \EdgeLN{red}{red}{white}{white}$,
    \item $\frac{2e(R, C)}{n^2} \to \EdgeBR{red}{red}{white}{white} + \EdgeBN{red}{red}{white}{white}$,
    \item $\frac{2e(C)}{n^2} \to \EdgeNaN{red}{red}{white}{white} + \EdgeRN{red}{red}{white}{white} + \EdgeRaR{red}{red}{white}{white}$.
  \end{itemize}
  Before averaging, we get
  \begin{equation*}
    \begin{aligned}
      0 \leq &
      \left(\EdgeLaL{red}{red}{white}{white} - \frac{2}{9}\right)
      + \frac{\frac{1}{2} - \EdgeL{red}{red}{white}}{\EdgeR{red}{red}{white} + \EdgeN{red}{red}{white}} \left(\EdgeLaR{red}{red}{white}{white} + \EdgeLN{red}{red}{white}{white}\right) + \frac{\frac{1}{2} - \EdgeB{red}{red}{white}}{\EdgeR{red}{red}{white} + \EdgeN{red}{red}{white}} \left(\EdgeBR{red}{red}{white}{white} + \EdgeBN{red}{red}{white}{white}\right)                                                                    \\
             & + \left(\EdgeNaN{red}{red}{white}{white} + \EdgeRN{red}{red}{white}{white} + \EdgeRaR{red}{red}{white}{white}\right) 
             \cdot \left[ \left(\frac{\frac{1}{2} - \EdgeL{red}{red}{white}}{\EdgeR{red}{red}{white} + \EdgeN{red}{red}{white}}\right)^2
           + \left(\frac{\frac{1}{2} - \EdgeB{red}{red}{white}}{\EdgeR{red}{red}{white} + \EdgeN{red}{red}{white}}\right)^2\right],
    \end{aligned}
  \end{equation*}
  and after multiplying by the common denominator and unlabeling
  \begin{align}\label{first-edge-cut}
      0 \leq &
      \left\llbracket
      \left(\EdgeLaL{red}{red}{white}{white} - \frac{2}{9}\right)
      \left(\EdgeR{red}{red}{white} + \EdgeN{red}{red}{white}\right)^2 
      + \left(\frac{1}{2} - \EdgeL{red}{red}{white}\right)\left(\EdgeLaR{red}{red}{white}{white} + \EdgeLN{red}{red}{white}{white}\right)
      \left(\EdgeR{red}{red}{white} + \EdgeN{red}{red}{white}\right) 
      \right. \nonumber
      \\
             & + \left(\frac{1}{2} - \EdgeB{red}{red}{white}\right)
             \left(\EdgeBR{red}{red}{white}{white} + \EdgeBN{red}{red}{white}{white}\right)
             \left(\EdgeR{red}{red}{white} + \EdgeN{red}{red}{white}\right) \\
             & \left.
             + \left(\EdgeNaN{red}{red}{white}{white} + \EdgeRN{red}{red}{white}{white} + \EdgeRaR{red}{red}{white}{white}\right) 
             \cdot \left[ \left(\frac{1}{2} - \EdgeL{red}{red}{white}\right)^2
             + \left(\frac{1}{2} - \EdgeB{red}{red}{white}\right)^2 \right]
             \right\rrbracket. \nonumber
\end{align}
  For the next partition, we redefine sets $L$, $R$ and $C$ as:
  \begin{itemize}
    \item $L := N(u) \setminus N(v) \cup (V(G) \setminus (N(u) \cup N(v)))$,
    \item $R := N(v) \setminus N(u)$,
    \item $C := V(G) \setminus (L \cup R) = N(u,v)$.
  \end{itemize}
The obtained partition is well-defined, because $L \subseteq V(G) \setminus N(v)$, $R \subseteq V(G) \setminus N(u)$ and both vertices are red. After transformations analogous to the previous partition, this gives
  \begin{equation}\label{second-edge-cut}
    \begin{aligned}
      0 \leq & \left\llbracket 
      \left(\EdgeLaL{red}{red}{white}{white} + \EdgeLN{red}{red}{white}{white} + \EdgeNaN{red}{red}{white}{white} + \EdgeRaR{red}{red}{white}{white} - \frac{2}{9}\right)
      \EdgeB{red}{red}{white} \right. \\
             & + \left(\frac{1}{2} - \EdgeL{red}{red}{white} - \EdgeN{red}{red}{white}\right) 
             \left(\EdgeBL{red}{red}{white}{white} + \EdgeBN{red}{red}{white}{white}\right)
             + \left.
               \left(\frac{1}{2} - \EdgeR{red}{red}{white}\right)
               \EdgeBR{red}{red}{white}{white}
               \right\rrbracket.
    \end{aligned}
  \end{equation}
  Notice that there are no edges in $C$, which explains the absence of the final summand and lowers the power of the common denominator.

  The final partition we use is based on a cherry.
  We fix a red cherry $(u, v, w)$ (where $v$ is connected to both $u$ and $w$) and define sets $L$, $R$ and $C$ as
  \begin{itemize}
    \item $L := (N(u) \cup N(w)) \setminus N(v)$,
    \item $R := N(v) \setminus N(u) \cup (V(G) \setminus (N(u) \cup N(v) \cup N(w)))$,
    \item $C := V(G) \setminus (L \cup R) = N(u,v)$.
  \end{itemize}
  Since $L \subseteq V(G) \setminus N(v)$ and $R \subseteq V(G) \setminus N(u)$, it holds that $|L|, |R| \leq \frac{n}{2}$.
  After transformations analogous to the previous partition, i.e., randomly extending sets $A$ and $B$ with vertices from $C$ until the size of $\frac{n}{2}$, this partition gives
  \begin{align}\label{cherry-cut}
    &0 \leq \left\llbracket
  \left(\CherryRaR{red}{white} + \CherryLaR{red}{white} + \CherryLaL{red}{white} + \CherryLRaR{red}{white} + \CherryLaLR{red}{white} + \CherryLRaLR{red}{white} \right. \right. \nonumber \\
           & \qquad\quad + \left. \CherryNN{red}{white} + \CherryNM{red}{white} + \CherryMM{red}{white} + \CherryNaMR{red}{white} + \CherryMaMR{red}{white} - \frac{2}{9}\right)
           \left(\CherryLM{red}{white} + \CherryA{red}{white}\right) \\
           & + \left(\CherryLMaR{red}{white} + \CherryAaR{red}{white} + \CherryLaLM{red}{white} + \CherryLaA{red}{white} + \CherryLMaLR{red}{white} + \CherryLRaA{red}{white}\right)
           \left(\frac{1}{2} - \CherryR{red}{white} - \CherryL{red}{white} - \CherryLR{red}{white}\right) \nonumber \\
           & \left. + \left(\CherryLMaN{red}{white} + \CherryNaA{red}{white} + \CherryLMaMR{red}{white} + \CherryLMaM{red}{white} + \CherryMaA{red}{white}\right) 
           \left(\frac{1}{2} - \CherryM{red}{white} - \CherryMR{red}{white} - \CherryN{red}{white}\right)
             \right\rrbracket. \nonumber 
\end{align}
Similarly, there are no edges within $C$, which lowers the power of the common denominator and reduces the total number of vertices in the final linear combination of flags.
This is crucial for the tractability of the SDP instance that is used to finish this proof.

We use semidefinite programming solver to prove that a graphon satisfying \eqref{vertex-cut}, \eqref{first-edge-cut}, \eqref{second-edge-cut} and \eqref{cherry-cut} must also satisfy
\begin{equation}\label{must-be-nice}
  \Edge{white}{white}\left(\frac{2}{3} - \Edge{white}{white}\right) 
  + \Edge{white}{white}\left(\frac{2}{9} - \Triangle{white}{white}{white}\right) 
  + \unlab{1 - \TriangleLM{white}{white} - \TriangleMR{white}{white} - \TriangleLR{white}{white}} 
  < 10^{-9}.
\end{equation}
Exact calculations can be viewed at \url{https://github.com/ignacy123/k4-free-balanced-bipartitions}.

By \autoref{nice} we know that $G$ cannot be $\varepsilon$-nice with $\varepsilon = 10^{-5}$.
It is also easy to see that no $k$-blowup of $G$ can be $\varepsilon$-nice, for any $k \geq 1$.
However, \eqref{must-be-nice} implies that sufficiently large $k$-blowups of $G$ are $10^{-9}$-nice, which is a contradiction.
This shows that $W$ cannot exist and, in turn, neither can $G$.
It follows that the balanced bipartite distance of every $K_4$-free graph on $n$ vertices is at most $\frac{n^2}{9}$.
\end{proof}

\section{Acknowledgements}\label{sec:acknowledgement}
The computer-assisted part of this work is based on the flag program originally developed by Bernard Lidický (which was used in \cite{code-bernard}).
The authors are thankful for his permission for modifying and publishing the modified version of the program.

The authors would like to thank American Institute of Mathematics (AIM) for hosting the workshop ``Flag algebras and extremal combinatorics'', where three of the authors worked on this project.

The authors thank Bernard Lidick\'y and Felix Clemen for fruitful discussions.

\printbibliography
\end{document}